\documentclass[reqno,usenames]{amsart}

\usepackage{tikz}
\usepackage{amsfonts}
\usepackage{bbm} 
\usepackage{bbold} 
\usepackage{amssymb}
\usepackage{MnSymbol} 
\usepackage{amsbsy}
\usepackage{verbatim}
\usepackage{amscd}
\usepackage{graphicx} 
\usepackage[geometry]{ifsym} 
\usepackage{xypic}
\usepackage{graphics}
\usepackage{rotating}
\usepackage[colorlinks=true,citecolor=red,urlcolor=red]{hyperref}

\makeatletter{}
\pagestyle{plain}

\newtheorem{theorem}{Theorem}[section]
\newtheorem{prop}[theorem]{Proposition}
\newtheorem{lemma}[theorem]{Lemma}
\newtheorem{con}[theorem]{Conjecture}

\newtheorem{obv}[theorem]{Observation}
\theoremstyle{definition}
\newtheorem{definition}[theorem]{Definition}

\theoremstyle{remark}
\newtheorem{remark}[theorem]{Remark}
\newtheorem{example}[theorem]{Example}

\theoremstyle{claim}

\def\cA{\mathcal A}\def\cB{\mathcal B}\def\cC{\mathcal C} \def\cD{\mathcal D}\def\cE{\mathcal E}\def\cF{\mathcal F} \def\cG{\mathcal G}\def\cH{\mathcal H}\def\cI{\mathcal I} \def\cJ{\mathcal J}\def\cK{\mathcal K}\def\cL{\mathcal L}
\def\cM{\mathcal M}\def\cN{\mathcal N}\def\cO{\mathcal O} \def\cP{\mathcal P}\def\cQ{\mathcal Q}\def\cR{\mathcal R} \def\cS{\mathcal S}\def\cT{\mathcal T}\def\cU{\mathcal U} \def\cV{\mathcal V}\def\cW{\mathcal W}\def\cX{\mathcal X}
\def\cY{\mathcal Y}\def\cZ{\mathcal Z}

\def\AA{\mathbb A}\def\BB{\mathbb B}\def\CC{\mathbb C}\def\DD{\mathbb D}
\def\EE{\mathbb E}\def\FF{\mathbb F}\def\GG{\mathbb G}\def\HH{\mathbb H}
\def\II{\mathbb I}\def\JJ{\mathbb J}\def\KK{\mathbb K}\def\LL{\mathbb L}
\def\MM{\mathbb M}\def\NN{\mathbb N}\def\OO{\mathbb O}\def\PP{\mathbb P}
\def\QQ{\mathbb Q}\def\RR{\mathbb R}\def\SS{\mathbb S}\def\TT{\mathbb T}
\def\UU{\mathbb U}\def\VV{\mathbb V}\def\WW{\mathbb W}\def\XX{\mathbb X}
\def\YY{\mathbb Y}\def\ZZ{\mathbb Z}

\def\a{\alpha}
\def\b{\beta}
\def\d{\delta}
\def\e{\eta}
\def\eps{\epsilon}
\def\l{\lambda}
\def\r{\rho}
\def\s{\sigma}
\def\t{\theta}
\def\vp{\varphi}
\def\z{\zeta}

\DeclareSymbolFont{bbgreek}{U}{bbold}{m}{n}

\DeclareMathSymbol{\bbalpha}{\mathbb}{bbgreek}{'13}
\DeclareMathSymbol{\bbbeta}{\mathbb}{bbgreek}{'14}
\DeclareMathSymbol{\bbchi}{\mathbb}{bbgreek}{'37}
\DeclareMathSymbol{\bbdelta}{\mathbb}{bbgreek}{'16}
\DeclareMathSymbol{\bbepsilon}{\mathbb}{bbgreek}{'17}
\DeclareMathSymbol{\bbphi}{\mathbb}{bbgreek}{'36}
\DeclareMathSymbol{\bbgamma}{\mathbb}{bbgreek}{'15}
\DeclareMathSymbol{\bbeta}{\mathbb}{bbgreek}{'21}
\DeclareMathSymbol{\bbiota}{\mathbb}{bbgreek}{'23}
\DeclareMathSymbol{\bbkappa}{\mathbb}{bbgreek}{'24}
\DeclareMathSymbol{\bblambda}{\mathbb}{bbgreek}{'25}
\DeclareMathSymbol{\bbmu}{\mathbb}{bbgreek}{'26}
\DeclareMathSymbol{\bbnu}{\mathbb}{bbgreek}{'27}
\DeclareMathSymbol{\bbpi}{\mathbb}{bbgreek}{'31}
\DeclareMathSymbol{\bbtheta}{\mathbb}{bbgreek}{'22}
\DeclareMathSymbol{\bbrho}{\mathbb}{bbgreek}{'32}
\DeclareMathSymbol{\bbsigma}{\mathbb}{bbgreek}{'33}
\DeclareMathSymbol{\bbvarsigma}{\mathbb}{bbgreek}{'20}
\DeclareMathSymbol{\bbtau}{\mathbb}{bbgreek}{'34}
\DeclareMathSymbol{\bbupsilon}{\mathbb}{bbgreek}{'35}
\DeclareMathSymbol{\bbomega}{\mathbb}{bbgreek}{'177}
\DeclareMathSymbol{\bbxi}{\mathbb}{bbgreek}{'30}
\DeclareMathSymbol{\bbpsi}{\mathbb}{bbgreek}{'40}

\DeclareMathSymbol{\bbzero}{\mathbb}{bbgreek}{'60}
\DeclareMathSymbol{\bbunit}{\mathbb}{bbgreek}{'61}

\DeclareMathSymbol{\bbAlpha}{\mathbb}{bbgreek}{'101}
\DeclareMathSymbol{\bbBeta}{\mathbb}{bbgreek}{'102}
\DeclareMathSymbol{\bbChi}{\mathbb}{bbgreek}{'130}
\DeclareMathSymbol{\bbDelta}{\mathbb}{bbgreek}{'1}
\DeclareMathSymbol{\bbEpsilon}{\mathbb}{bbgreek}{'105}
\DeclareMathSymbol{\bbPhi}{\mathbb}{bbgreek}{'10}
\DeclareMathSymbol{\bbGamma}{\mathbb}{bbgreek}{'0}
\DeclareMathSymbol{\bbEta}{\mathbb}{bbgreek}{'110}
\DeclareMathSymbol{\bbIota}{\mathbb}{bbgreek}{'111}
\DeclareMathSymbol{\bbKappa}{\mathbb}{bbgreek}{'113}
\DeclareMathSymbol{\bbLambda}{\mathbb}{bbgreek}{'3}
\DeclareMathSymbol{\bbMu}{\mathbb}{bbgreek}{'115}
\DeclareMathSymbol{\bbNu}{\mathbb}{bbgreek}{'116}
\DeclareMathSymbol{\bbO}{\mathbb}{bbgreek}{'117}
\DeclareMathSymbol{\bbPi}{\mathbb}{bbgreek}{'5}\DeclareMathSymbol{\bbTheta}{\mathbb}{bbgreek}{'2}
\DeclareMathSymbol{\bbRho}{\mathbb}{bbgreek}{'120}
\DeclareMathSymbol{\bbSigma}{\mathbb}{bbgreek}{'6}
\DeclareMathSymbol{\bbTau}{\mathbb}{bbgreek}{'124}
\DeclareMathSymbol{\bbUpsilon}{\mathbb}{bbgreek}{'131}
\DeclareMathSymbol{\bbOmega}{\mathbb}{bbgreek}{'12}
\DeclareMathSymbol{\bbXi}{\mathbb}{bbgreek}{'4}
\DeclareMathSymbol{\bbPsi}{\mathbb}{bbgreek}{'11}
\DeclareMathSymbol{\bbZeta}{\mathbb}{bbgreek}{'132}

\DeclareMathOperator{\Aut}{Aut}
\DeclareMathOperator{\coev}{coev}
\DeclareMathOperator{\End}{End}
\DeclareMathOperator{\ev}{ev}
\DeclareMathOperator{\Gal}{Gal}
\DeclareMathOperator{\GL}{GL}
\DeclareMathOperator{\Hom}{Hom}
\DeclareMathOperator{\id}{id}
\DeclareMathOperator{\Id}{Id}
\DeclareMathOperator{\Iso}{Iso}
\DeclareMathOperator{\Map}{Map}
\DeclareMathOperator{\Mat}{Mat}
\DeclareMathOperator{\Mor}{Mor}
\DeclareMathOperator{\qdim}{qdim}
\DeclareMathOperator{\Rep}{Rep}
\DeclareMathOperator{\Span}{Span}
\DeclareMathOperator{\Vect}{Vect}

\newcommand{\nn}{\nonumber}
\newcommand{\nid}{\noindent}
\newcommand{\ra}{\rightarrow}
\newcommand{\la}{\leftarrow}
\newcommand{\lra}{\longrightarrow}
\newcommand{\lla}{\longleftarrow}
\newcommand{\xra}{\xrightarrow}
\newcommand{\xla}{\xleftarrow}
\newcommand{\into}{\hookrightarrow}
\newcommand{\weq}{\xrightarrow{\sim}}
\newcommand{\cofib}{\rightarrowtail}
\newcommand{\fib}{\twoheadrightarrow}
\newcommand{\g}{\ensuremath{\mathfrak{g}} }
\newcommand{\VectC}{\ensuremath{\Vect_\CC^{\mbox{\scriptsize fin}}}}
\def\defeq{\ensuremath{\buildrel\triangle\over =}} 
\def\llarrow{   \hspace{.05cm}\mbox{\,\put(0,-2){$\leftarrow$}\put(0,2){$\leftarrow$}\hspace{.45cm}}}
\def\rrarrow{   \hspace{.05cm}\mbox{\,\put(0,-2){$\rightarrow$}\put(0,2){$\rightarrow$}\hspace{.45cm}}}
\def\lllarrow{  \hspace{.05cm}\mbox{\,\put(0,-3){$\leftarrow$}\put(0,1){$\leftarrow$}\put(0,5){$\leftarrow$}\hspace{.45cm}}}
\def\rrrarrow{  \hspace{.05cm}\mbox{\,\put(0,-3){$\rightarrow$}\put(0,1){$\rightarrow$}\put(0,5){$\rightarrow$}\hspace{.45cm}}}

\def\bfA{\mathbf A}
\def\uk{\underline{k}}
\def\del{\partial}
\def\qbar{\bar{\mathbb{Q}}}

\newcommand{\cech}[1]{\ensuremath{#1^{*}}}
\newcommand{\rcech}[1]{\ensuremath{^{\scriptscriptstyle \vee}\!\!#1}}

\newcommand{\ov}[1]{\ensuremath{\overline{#1}}}

\newcommand{\ignore}[1]{}
\newcommand{\mb}[1]{\ensuremath{\mbox{\scriptsize #1}}} \newcommand{\cat}[1]{\mathcal{\uppercase{#1}}}
\newcommand{\fld}[1]{\mathbf{#1}}
\newcommand{\catvec}[1]{\cat{VEC}(\mathbf{#1})}
\newcommand{\catvecfin}[1]{\cat{VEC}_{fin}(\mathbf{#1})}
\newcommand{\leftexp}[2]{{\vphantom{#2}}^{#1}{#2}}
\newcommand{\standardfigsized}[3]{
	\begin{figure}
		\centerline{\includegraphics[#3]{figures/#1}}
		\ignore{    \caption{#2 Reference \verb{#1}.}\label{#1}     }
		\caption{#2}\label{#1}
	\end{figure}
}
\newcommand{\substandardfigsized}[3]{
  \begin{figure}
    \caption{INSERT PICTURE. #2 Reference $#1$.}\label{#1}
  \end{figure}
}
\newcommand{\standardfig}[2]{
  \standardfigsized{#1}{#2}{}
}\newcommand{\substandardfig}[2]{
  \substandardfigsized{#1}{#2}{}
}

\newcommand{\standardgraph}[1]{\includegraphics{figures/#1}}
\newcommand{\trivobject}{{\mathbf 1}}
\newcommand{\nnr}[2]{\langle #1,#2 \rangle}
\newcommand{\snnr}[2]{(#1,#2)}

\newcommand{\orit}[1]{{\color{blue}#1}}
\newcommand{\tobe}[1]{{\color{red}#1}}
\newcommand{\com}[1]{{\color{ComColor}#1}}

\renewcommand{\labelitemi}{\FilledSmallTriangleRight}

\ignore{\newcommand{\fixme}[1]{}}

\newcommand{\bitm}{\begin{itemize}}
\newcommand{\eitm}{\end{itemize}}

\newenvironment{mylist}{\begin{list}{(\roman{listi})}{\usecounter{listi}\setlength{\topsep}{0ex}\setlength{\itemsep}{0ex}\setlength{\parsep}{0ex}\setlength{\parskip}{0ex}\setlength{\leftmargin}{1ex}}}{\end{list}}

\def\sec{\S section}

\begin{document}

\title[AMC]
{On Arithmetic Modular Categories}

\author{Orit Davidovich$^{1}$, Tobias Hagge$^{2}$, and Zhenghan Wang$^{3,4}$}
\address{$^1$Department of Mathematics\\Northwestern University\\Evanston, IL 60208}
\address{$^2$Department of Mathematical Sciences\\University of Texas at Dallas\\Richardson, TX 75080}
\address{$^3$Microsoft Station Q\\ University of California\\ Santa Barbara, CA 93106}
\address{$^4$Department of Mathematics\\University of California\\Santa Barbara, CA 93106}

\maketitle

\setlength{\baselineskip}{3.5ex}
\setlength{\intextsep}{1ex} 
\newlength{\pips}
\newcounter{listi}

\section{Introduction} 

Modular categories are elaborate algebraic structures which may be regarded as categorifications of finite abelian groups \cite{Wang12}.  They serve as the algebraic input of $(2+1)$-topological quantum field theories (TQFTs) and $(1+1)$-conformal field theories (CFTs) \cite{Tu}\cite{MS}.  Unitary modular categories have recently found applications in condensed matter physics and quantum computing \cite{Wangbook}.  

A program to classify modular categories revealed a deep connection between modular category theory and number theory \cite{RSW}\cite{BNRW}.  One manifestation of such a connection is the congruence property of the kernel subgroup of the modular representation of the modular group $\textrm{SL}(2,\mathbb{Z})$ of a modular category \cite{NS}.  Given a modular category $\cC$ over $\mathbb{C}$, it is natural to ask whether  there exists an algebraic number field $k$ and a modular category $\cC'$ defined over $k$ so that $\cC\cong \cC' \otimes_k \CC$.  Such a number field $k$ is called a {\it defining number field} for $\cC$.

 Etingof, Nikshych and Ostrik observed that the existence of a defining algebraic number field for a fusion category over $\CC$ follows from their proof of Ocneanu rigidity, and further asked whether every fusion category over $\mathbb{C}$ has a defining cyclotomic field (see remark and question after Theorem $2.30$ in \cite{ENO}.)  Morrison and Snyder proved that some fusion categories from the (extended) Haagerup subfactors cannot have cyclotomic defining number fields \cite{MoS}.  It is further known that the modular data of modular categories and CFTs can be presented within cyclotomic number fields \cite{Gan}\cite{RSW}.  
 
 Our paper is an elementary treatment of arithmetic definitions of modular categories, and serves as the first step towards an arithmetic theory of modular categories and their applications to quantum physics and quantum computing.  Our technical tool is an explicit definition of a modular category as a set of numbers satisfying a collection of polynomials up to an equivalence.  Therefore, modular categories with the same fusion rules are equivalence classes of solutions to the same set of algebraic equations, i.e., equivalent classes of points on some algebraic variety. The numerical data of a modular category $\cC$ is conveniently organized into matrices: $F$-matrices, $R$-matrices, $S$-matrix, and $T$-matrix, and a pivotal coefficient vector $\epsilon$ with coordinates in $\{\pm 1\}$.  The existence of such a numerical definition of a modular category follows from the categorical definition.  Our contribution is to pin down the details, which is not straightforward.  This numerical definition of a modular category is an analogy of the definition of a connection in differential geometry and gauge theory by the Christoffel symbols.  While the fusion rules $N_{ij}^k$, the $S$-matrix, the $T$-matrix, and the eigenvalues of the $R$-matrices are intrinsic, the rest of data are in general gauge dependent in the sense that they depend on the choices of bases of the morphism spaces.  But in applications to physics, the matrix entries of $F$-matrices are called {\it $6j$ symbols}, and directly enter the definitions of Hamiltonians and appear as probability amplitudes of quantum processes.  As such, they should be computable numbers which include all algebraic numbers.  We prove that under a suitable gauge choice, every modular category over $\mathbb{C}$ has a definition by $F$-matrices, $R$-matrices, $S$-matrix, and $T$-matrix whose entries are all within some algebraic number field $K_\cC$.  Our existence proof of the defining algebraic number fields for a modular category does not provide any information on wether or not there is a canonical choice of such a number field.  While not true for fusion categories, we believe that every modular category over $\mathbb{C}$ has a cyclotomic defining number field. If so, then a canonical choice of a defining number field for a given modular category $\cC$ would be a cyclotomic field of minimal degree.

Unitary modular categories are algebraic models of anyons \cite{Ki}\cite{Wangbook}.  Therefore, classification of unitary modular categories has direct application in the identification of topological phases of matter \cite{GaloisConjugate}\cite{JWB}.  For a classification, it is very useful to generate new modular categories from old ones.  Our arithmetic definition provides such a method: Galois twist.  Galois twists also provide an interesting way of organizing modular categories into orbits under Galois actions w.r.t. a suitably chosen number field. All modular categories within a single orbit have the same fusion rules.  It would have been desirable if all modular categories with the same fusion rules were in the same Galois orbit with respect to some suitably chosen number field.  Unfortunately, this is not the case as we shall see below. 

 The contents of the paper are as follows.  In Section $2$, we set up our conventions and observe that for fusion categories, the two defining identities for rigidity are not independent.  In Section $3$, we introduce the modular system for a modular category.  The modular system is an algebraic variety and defining a modular category is equivalent to solving those equations up to equivalence.  In this sense, the theory of modular category is completely elementary.  In Section $4$, we show that every modular category has an algebraic defining number field using our modular systems.  Then, we define Galois twists of modular categories and examine several examples.  We also make several conjectures about orbits of Galois twists and the intrinsic data of modular categories.  Finally, we speculate on potential applications of defining number fields to the search for exotic modular categories.    

\section{Preliminaries} 

\subsection{Conventions} 

\label{sec:conventions}

Throughout the paper $k$ will denote a field of characteristic $0$. 

By a {\em fusion category} defined over $k$ we mean a $k$-linear category which is also abelian, semisimple and rigid monoidal, having finitely many simple objects (up to isomorphism) including the neutral object $\mathbf{1}$, such that the endomorphism space of every simple object is identified with $k$.  When $k$ is algebraically closed the last requirement is redundant\footnote{The definition of a fusion category in the algebraically closed case typically assumes only $\cC(\mathbf{1},\mathbf{1})=k$ (see \cite{ENO}). In the non-algebraically closed case the definition of a fusion category requires extra assumptions.  In \cite{BK} a simple object is defined to be a non-zero object such that every injection into it is either 0 or an isomorphism.  In this case we add the requirement $\cC(a,a)=k$ for every simple object $a \in \cC$ AND the neutral object is simple.}.  A {\em modular category} defined over $k$ is a ribbon fusion category over $k$ with an invertible $S$-matrix.

We typically denote simple objects in semi-simple abelian categories by the letters $a,b,c,\ldots$ whereas generic objects are denoted by the letters $x,y,\ldots$.  When working with monoidal categories we sometimes write `$xy$' to mean `$x \otimes y$'. We use a numerical in bold `$\mathbf{1}$' to designate the object in a monoidal category neutral with respect to tensor product.  We use a numerical `$1_x$' to designate the identity morphism $1_x : x \ra x$.  By a $(m,n)$-Hom-space we mean a morphism space of the form $\cC(a_1 \otimes\cdots\otimes a_m, b_1 \otimes\cdots\otimes b_n)$ where $a_i,b_j$ are simple in $\cC$.  Functors between (braided) monoidal categories are required to be (braided) monoidal unless stated otherwise.  In particular, equivalences between fusion / modular categories are required to respect structure, unless stated otherwise.  

In many proofs we employ graphical calculus.  Our conventions comply with \cite{BK}, in particular, diagrams are read bottom to top.  However, we do not identify a line labeled by an object $x$ with a line labeled by an object $x^{**}$, a distinction which allows us to remove arrows from diagrams.

\subsection{Linear Abelian Categories} 

\label{ss:lac}

In all $k$-linear semi-simple abelian categories considered throughout, we assume that there are finitely many isomorphism classes of simple objects, that morphism spaces are finite dimensional, and that functors between $k$-linear abelian categories are $k$-linear and additive.  

Let $\cC$ be a $k$-linear semi-simple abelian category with {\em absolutely simple} objects (i.e. the endomorphism space of every simple object is identified with $k$). Let $L$ be a set of representatives for all isomorphism classes of simple objects in $\cC$.  Since every object $x \in \cC$ admits a decomposition into simple objects $c \in L$ there are
\begin{eqnarray}
&& \eta_x^{c,i} \in \cC(x,c) \quad , \quad i = 1 \ldots \dim_k \cC(x,c) \label{eq:project} \\
&& \eta^x_{c,j} \in \cC(c,x) \quad , \quad j = 1 \ldots \dim_k \cC(c,x) \label{eq:inject}
\end{eqnarray}
such that
\[ \eta_x^{c,i} \circ \eta^x_{c,j} = \d_{i,j} 1_c \]
\[ \sum_{c \in L} \sum_{i} \eta^x_{c,i} \circ \eta_x^{c,i} = 1_x \]
In particular, there is a perfect pairing defined by the composition
\[ \cC(c,x) \times \cC(x,c) \lra \cC(c,c) \equiv k \]
with respect to which the bases $\{ \eta_{c,j}^x \}$ and $\{ \eta_x^{c,i} \}$ are dual. 

This allows for a notion of 'linear extension' natural transformations of functors between linear abelian categories.
\begin{lemma}
\label{lem:nonsense}
Let $\cC$ and $\cD$ be $k$-linear semi-simple abelian categories.  Let $F,G : \cC^{\times n} \ra \cD$ be functors additive in each component.  Assume for any $n$-tuple of simple objects $(a_1,\ldots,a_n) \in \cC^{\times n}$ there is a morphism
\[ \eta(a_1,\ldots,a_n) : F(a_1,\ldots,a_n) \lra  G(a_1,\ldots,a_n) \]
such that for any $n$-tuple of morphisms $(f_1,\ldots,f_n) :  (a_1,\ldots,a_n) \ra  (a'_1,\ldots,a'_n)$ the following diagram commutes
\[
\xymatrix{
F(a_1,\ldots,a_n) \ar[rr]^{ F(f_1,\ldots,f_n)} \ar[d]_{ \eta(a_1,\ldots,a_n)} && 
F(a'_1,\ldots,a'_n) \ar[d]^{ \eta(a'_1,\ldots,a'_n)} \\
G(a_1,\ldots,a_n) \ar[rr]_{ G(f_1,\ldots,f_n)} && G(a'_1,\ldots,a'_n)
}
\]
Then $\eta$ extends uniquely to an additive natural transformation $\eta: F \ra G$. In other words, $\eta$ is $n$-functorial.
\end{lemma}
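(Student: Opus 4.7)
The plan is to exploit semi-simplicity in $\cC$ to reduce everything to the already-given data on tuples of simples. Fix decompositions as in \eqref{eq:project}--\eqref{eq:inject} and for arbitrary $(x_1,\ldots,x_n) \in \cC^{\times n}$ set
\[ \eta(x_1,\ldots,x_n) := \sum G(\eta^{x_1}_{c_1,i_1},\ldots,\eta^{x_n}_{c_n,i_n}) \circ \eta(c_1,\ldots,c_n) \circ F(\eta_{x_1}^{c_1,i_1},\ldots,\eta_{x_n}^{c_n,i_n}), \]
summed over $c_j \in L$ and $1 \le i_j \le \dim_k \cC(c_j,x_j)$. When every $x_j=a_j$ is simple, absolute simplicity forces the only surviving term to have $c_j=a_j$ and $i_j=1$ with $\eta^{a_j}_{a_j,1}=\eta_{a_j}^{a_j,1}=1_{a_j}$, recovering the prescribed morphism $\eta(a_1,\ldots,a_n)$. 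Independence of the choice of bases will follow from additivity of $F$ and $G$ in each component together with the hypothesized naturality applied to the scalar change-of-basis automorphisms on tuples of simples (every such automorphism has matrix entries in $k = \End_\cC(c_j)$).

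For naturality at an arbitrary $(f_1,\ldots,f_n) : (x_1,\ldots,x_n) \to (y_1,\ldots,y_n)$, absolute simplicity and the perfect pairing give a decomposition $f_j = \sum \lambda^{(j)}_{c_j,i'_j,i_j}\, \eta^{y_j}_{c_j,i'_j} \circ \eta_{x_j}^{c_j,i_j}$ with scalars $\lambda^{(j)}_{c_j,i'_j,i_j} \in k$. Substituting into $F(f_1,\ldots,f_n)$ and $G(f_1,\ldots,f_n)$ and expanding multilinearly, the naturality square breaks into a sum of pieces; after the outer projections and injections are absorbed into $F$ and $G$ by multi-functoriality, each piece is an instance of the hypothesized commuting square for the simple tuple of scalar morphisms $(\lambda^{(1)} 1_{c_1},\ldots,\lambda^{(n)} 1_{c_n})$. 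Summation over all indices reassembles both sides into the desired naturality square for $(f_1,\ldots,f_n)$.

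For uniqueness, suppose $\eta'$ is any additive natural transformation restricting to the prescribed $\eta$ on simple tuples. Applying naturality of $\eta'$ to the injection tuple $(\eta^{x_1}_{c_1,i_1},\ldots,\eta^{x_n}_{c_n,i_n}) : (c_1,\ldots,c_n) \to (x_1,\ldots,x_n)$, composing on the right with $F(\eta_{x_1}^{c_1,i_1},\ldots,\eta_{x_n}^{c_n,i_n})$, summing over all $(c_j,i_j)$, and using multi-functoriality of $F$ together with the completeness relation $\sum_{c,i}\eta^{x_j}_{c,i} \circ \eta_{x_j}^{c,i} = 1_{x_j}$, the left side collapses to $\eta'(x_1,\ldots,x_n) \circ F(1_{x_1},\ldots,1_{x_n}) = \eta'(x_1,\ldots,x_n)$, while the right side equals the defining formula above. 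The main obstacle I anticipate is the multilinear bookkeeping in the naturality step: the argument is entirely formal, but it involves large sums whose collapse must be tracked carefully through the multi-functoriality of both $F$ and $G$ and through the duality of the projection and injection bases.
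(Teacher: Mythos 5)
Your proposal follows exactly the route the paper has in mind with its terse ``proved by abstract nonsense'': write down the block-diagonal morphism dictated by the semisimple decomposition of each argument, then verify that the sum collapses correctly under multi-functoriality and the duality of the projection--injection bases. The extension formula, the naturality computation via the decomposition $f_j=\sum\lambda^{(j)}\,\eta^{y_j}_{c_j,i'_j}\circ\eta^{c_j,i_j}_{x_j}$, and the uniqueness argument using the completeness relation all check out. Two points deserve tightening, though neither is a genuine gap.

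First, the claim that ``absolute simplicity forces $c_j=a_j$'' holds only when $a_j$ already lies in the chosen transversal $L$. The hypothesis supplies $\eta(a_1,\ldots,a_n)$ for \emph{all} tuples of simple objects, so to verify that your formula reproduces the prescribed $\eta$ on a tuple with $a_j\cong c_j\in L$ but $a_j\ne c_j$, you must invoke the hypothesized naturality square once more --- this time for the isomorphisms $\eta^{a_j}_{c_j,1}:c_j\to a_j$, which are morphisms between simples and whose composites $\eta^{a_j}_{c_j,1}\circ\eta_{a_j}^{c_j,1}=1_{a_j}$ produce the desired identification. This is in fact the \emph{only} place the hypothesized naturality has nontrivial content: for tuples drawn from $L$ every Hom space between distinct simples vanishes, so the naturality square there reduces to $k$-linearity of $F$ and $G$ and holds automatically.

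Second, basis independence of your formula is pure multilinear algebra, not a consequence of the hypothesized naturality. If each basis $\{\eta^{x_j}_{c_j,i}\}$ is transformed by $A^{(j)}\in\mathrm{GL}(k)$, the dual basis transforms contragrediently and the matrix coefficients telescope to Kronecker deltas by $A^{(j)-1}A^{(j)}=I$; $k$-multilinearity of $F$ and $G$ then gives equality with the original sum. Trying to derive this from naturality against ``scalar change-of-basis automorphisms'' only captures the diagonal part of the change, which is not enough when some $\dim_k\cC(c_j,x_j)>1$. Similarly, the ``each piece is an instance of the hypothesized commuting square for scalar morphisms'' remark in the naturality step is technically true but the square for scalar endomorphisms of a fixed simple is, again, automatic from $k$-linearity; what actually carries the argument is the bilinearity bookkeeping you rightly flag as the delicate part.
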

The above lemma is proved by abstract nonsense.    

\subsection{Duals} 

\label{ss:dual}

A {\em monoidal} category $\cC$ is a category equipped with a bi-functor $\otimes : \cC \times \cC \ra \cC$, a distinguished object $\mathbf{1} \in \cC$, and natural isomorphisms
\begin{eqnarray}
&& \a_{x,y,z} : x \otimes (y \otimes z) \lra (x \otimes y) \otimes z \\
&& \l_x : \mathbf{1} \otimes x \lra x \\
&& \r_x: x \otimes \mathbf{1} \lra x
\end{eqnarray}
such that the pentagonal diagram
\begin{equation}
\label{eqn:pent_diag}
\xymatrix{
x \otimes (y \otimes (z \otimes w)) \ar[r] ^{\a} \ar[d]_{1_x\otimes\a} & (x \otimes y) \otimes (z \otimes w) \ar[r]^{\a} & ((x \otimes y) \otimes z) \otimes w  \\
x \otimes ((y \otimes z) \otimes w) \ar[rr] _{\a} && (x \otimes (y \otimes z)) \otimes w \ar[u]_{\a \otimes 1_w}}\end{equation}
and the triangular diagram
\begin{equation}
\label{eqn:tri_diag}
\xymatrix{
x \otimes (\mathbf{1} \otimes y) \ar[rr] ^{\a} \ar[d] _{1_x \otimes \l} && (x \otimes \mathbf{1}) \otimes y \ar[d] ^{\r \otimes 1_y} \\
x \otimes y \ar@{=}[rr] && x \otimes y
}\end{equation}
both commute.  Two other triangular diagrams (obtained by placing $\mathbf{1}$ to the left of $x$ or to the right of $y$) commute as a consequence of \eqref{eqn:pent_diag} and \eqref{eqn:tri_diag} combined (\cite{Ka}, Lemma XI.2.2).

A monoidal category is said to have \emph{right duals} if for every object $x \in \cC$ there exists an object $x^* \in \cC$ and co-evaluation and evaluation morphisms
\[ c_x : \mathbf{1} \lra x^* \otimes x \quad , \quad e_x : x \otimes x^* \lra \mathbf{1} \]
such that the composite
\begin{equation}
\label{eq:rda1}
\xymatrix{
x \ar[r]^{\r_x^{-1}} & x \otimes \mathbf{1} \ar[r]^{1_x \otimes c_x} & x \otimes (x^* \otimes x) \ar[r]^{\a_{x,x^*,x}} & (x \otimes x^*) \otimes x \ar[r]^{e_x \otimes 1_x} & \mathbf{1} \otimes x \ar[r]^{\l_x} & x
}
\end{equation}
equals the identity on $x$, and the composite
\begin{equation}
\label{eq:rda2}
\xymatrix{
x^* \ar[r]^{\l^{-1}} & \mathbf{1} \otimes x^* \ar[r]^{c_x \otimes 1} & (x^* \otimes x) \otimes x^* \ar[r]^{\a_{-1}} & x^* \otimes (x \otimes x^*) \ar[r]^{1 \otimes e_x} & x^* \otimes \mathbf{1} \ar[r]^{\r} & x^*
}
\end{equation}
equals the identity as well.  We refer to the first composite (\ref{eq:rda1}) as the \emph{first right duality axiom} (RDA-I), and to the second composite (\ref{eq:rda2}) as the \emph{second right duality axiom} (RDA-II).  Our conventions conform with the notion of right adjoint functors, i.e. the functor $x^* \otimes (-) : \cC \ra \cC$ is right adjoint to the functor $x \otimes (-) : \cC \ra \cC$.

As a consequence of \cite{O}, a monoidal category with right duals has left duals as well if and only if for every object $x$ there exists an object $y$ such that $x \cong y^*$.  This is indeed the case for $k$-linear semi-simple abelian categories satisfying the above finiteness assumptions (see \S\ref{ss:lac}), hence rigidity follows from the existence of right duals.  

When a monoidal category is $k$-linear semi-simple abelian, one duality axiom can be derived from the other.

\begin{lemma}
\label{lem:RA}
In a $k$-linear, semi-simple, abelian monoidal category $\cC$, where every simple object $a \in \cC$ is absolutely simple, i.e. $\cC(a,a) = k$, the first right duality axiom (RDA-I) implies the second right duality axiom (RDA-II).
\end{lemma}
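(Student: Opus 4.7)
The plan is to strictify $\cC$ via Mac Lane coherence so as to ignore the associators and unitors, and then show that the RDA-II composite $\phi := (1_{x^*} \otimes e_x) \circ (c_x \otimes 1_{x^*}) : x^* \to x^*$ equals $1_{x^*}$.

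First I would derive two zigzag-straightening identities, $(\phi \otimes 1_x) \circ c_x = c_x$ and $e_x \circ (1_x \otimes \phi) = e_x$, by a direct diagram chase: in each case one recognises an occurrence of the RDA-I zigzag $(e_x \otimes 1_x) \circ (1_x \otimes c_x) = 1_x$ sitting inside a three-strand composite and collapses it, leaving only the outer cup or cap.

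Next I would package the duality data into the natural adjunction-style maps $\Phi: \cC(x \otimes a, b) \to \cC(a, x^* \otimes b)$, $f \mapsto (1_{x^*} \otimes f) \circ (c_x \otimes 1_a)$, and $\Psi: \cC(a, x^* \otimes b) \to \cC(x \otimes a, b)$, $g \mapsto (e_x \otimes 1_b) \circ (1_x \otimes g)$. A short computation repackages RDA-I as $\Psi \circ \Phi = \mathrm{id}$ and delivers $\Phi \circ \Psi(g) = (\phi \otimes 1_b) \circ g$; since $\Phi\Psi$ is thereby an idempotent endomorphism of each $\cC(a, x^* \otimes b)$, the morphism $\phi$ itself is an idempotent in $\cC(x^*, x^*)$. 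By semi-simplicity the idempotent $\phi$ splits, $x^* = y_1 \oplus y_2$ with $y_1 = \mathrm{Im}(\phi)$, inclusion $\iota$ and projection $\pi$; the zigzag-straightening identities then say that $c_x$ factors through $y_1 \otimes x$ and $e_x$ through $x \otimes y_1$, so the restricted triple $(y_1, (\pi \otimes 1_x) c_x,\, e_x (1_x \otimes \iota))$ still satisfies RDA-I.

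The hard part will be showing $y_2 = 0$, which is equivalent to RDA-II for the original data. For every simple $a \in \cC$ the decomposition $\cC(a, x^*) = \cC(a, y_1) \oplus \cC(a, y_2)$ places $\cC(a, y_2)$ inside $\ker\Psi$ at $b = \mathbf{1}$, while $\Psi$ is itself surjective because $\Psi\Phi = \mathrm{id}$. Invoking Lemma~\ref{lem:nonsense} to promote the above data coherently across simple objects, together with the absolutely simple hypothesis $\cC(a,a) = k$ to pin down the relevant scalars, a dimension count should force $\cC(a, y_2) = 0$ for every simple $a$, whence $y_2 = 0$ by semi-simplicity and $\phi = 1_{x^*}$.
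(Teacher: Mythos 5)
Your zigzag-straightening identities and the resulting idempotency of $\phi$ are correct, and together they are exactly the computational content of the paper's graphical argument (which packages the same calculation as $\varphi_x(\phi) = 1_x$, where $\varphi_x:\cC(x^*,x^*)\to\cC(x,x)$ sends $f$ to the RDA-I zigzag with $f$ inserted on the dual strand). The gap is precisely where you flag it: surjectivity of $\Psi$, which is all that $\Psi\Phi=\mathrm{id}$ gives, says nothing about $\ker\Psi$, and the dimension count reads $\dim\cC(a,x^*) = \dim\ker\Psi + \dim\cC(x\otimes a, \mathbf{1})$, which cannot force $\ker\Psi = 0$ without independent control of the left-hand side. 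Indeed RDA-I alone does not imply $y_2 = 0$: in $\Vect_k$ take $x=k$ with basis $v$, $x^*=k^2$ with basis $w_1,w_2$, $c_x(1)=w_1\otimes v$, $e_x(v\otimes w_1)=1$, $e_x(v\otimes w_2)=0$; RDA-I holds, but $\phi(w_2)=0$, so $y_2\cong k\neq 0$. Lemma~\ref{lem:nonsense} supplies linear-extension data, not dimension bounds, and cannot close this gap. What your computation actually establishes is that the restricted triple $(y_1, (\pi\otimes 1_x)c_x, e_x(1_x\otimes\iota))$ satisfies both duality axioms, so that $x^*$ \emph{contains} a genuine right dual of $x$ as a direct summand --- a correct and nice structural observation --- but not that $y_2$ vanishes.

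The missing hypothesis, implicit in the paper's phrase ``in this case $\varphi_x$ is the identity on $k$'' and flagged in the remark following the proof, is that $x^*$ is also simple (equivalently, that $\varphi_x$ is injective). With that, your idempotent-splitting route finishes in one line and needs no dimension count: for simple $x$ one has $e_x\neq 0$ (otherwise RDA-I would give $1_x=0$), so $e_x(1_x\otimes\phi)=e_x$ forces $\phi\neq 0$, whence $y_1\neq 0$, and simplicity of $x^*$ then gives $y_2=0$. The paper's route is the mirror image of this: $\varphi_x:k\to k$ is nonzero (since $\varphi_x(1_{x^*})=1_x$ by RDA-I), hence injective, and the graphical computation $\varphi_x(\phi)=1_x$ then yields $\phi=1_{x^*}$. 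In the paper's applications $x^*$ is always a designated simple label, so the extra assumption costs nothing; but your write-up, like the lemma as literally stated, never invokes it, and the concluding dimension count is not a valid substitute for it.
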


\begin{proof}
Let $\cC$ be a $k$-linear, semi-simple, abelian monoidal category.  Let $x$ be an object for which there exists a right dual $x^*$ and evaluation and co-evaluation morphisms $e_x$ and $c_x$ such that the first duality axiom (RDA-I) holds.  Consider the map $\varphi_x : \cC(x^*,x^*) \ra \cC(x,x)$ defined by mapping $f \in \cC(x^*,x^*)$ to the composite
\begin{equation*}
\xymatrix{
x \ar[d]_{\r^{-1}} \ar[rrrr]^{\vp_x(f)} &&&& x \\ 
x \otimes \mathbf{1} \ar[r]^{1\otimes c\ \ \ \ \ \ } & x \otimes (x^* \otimes x) \ar[r]^{\a} &
 (x \otimes x^*) \otimes x \ar[r]^{(1 \otimes f) \otimes 1} & (x \otimes x^*) \otimes x \ar[r]^{\ \ \ \ e \otimes 1} & \mathbf{1} \otimes x \ar[u]^{\l}
}
\end{equation*}
Since $\cC$ is semi-simple, it is enough to assume $x$ is simple.  In this case $\varphi_x$ is the identity on $k$.  Hence it is enough to show $\varphi_x$ sends the composite in (RDA-II) to $1_x$.   In graphical terms, this means demonstrating

\begin{center}
\scalebox{.6}{
}
\label{fig:step5}
\end{center}

\noindent Applying the first right duality axiom twice to the left hand side we get what we wanted to prove.
\end{proof}

The assumptions in Lemma \ref{lem:RA} can be weakened.  The proof only uses the fact that for every simple object $a$ the map $\varphi_a : \cC(a^*,a^*) \ra \cC(a,a)$ is injective.

\subsection{The Double Dual.} 

\label{ss:dd}

Let $\cC$ be a monoidal category.  A monoidal functor $F=(F_0,F_1,F_2)$ is a triplet consisting of a functor $F_1: \cC \ra \cC$, an isomorphism $F_0: F_1(\mathbf{1}) \ra \mathbf{1}$, and a natural isomorphism $F_2 : \otimes \circ (F_1 \times F_1) \ra F_1 \circ \otimes$, all together satisfying certain compatibility conditions (\cite{Ka}, Definition XI.4.1).

Let $\cC$ be a fusion category.  The double-dual is the monoidal functor given by the triplet $\Delta = (\Delta_0,\Delta_1,\Delta_2)$, where $\Delta_0$ may be taken to be the identity on $\mathbf{1}$, $\Delta_1 : \cC \ra \cC$ is the functor defined by taking the double-dual $(-)^{**}$ of object and morphism, and $\Delta_2$ is the natural isomorphism $\Delta_2 : \otimes \circ (\Delta_1 \times \Delta_1) \ra \Delta_1 \circ \otimes$ defined graphically to be
\begin{center} 
\raisebox{5cm}{$\Delta_2(x,y)=$}
\scalebox{.6}{\begin{tikzpicture}
\makeatletter{}\draw (0.7,9.3) node [minimum height=0, minimum width=1.2cm] (deltadualdualCircleTimesxy0) {$(xy)^{**}$};
\draw (2.35,8.4) node [] (deltarhodualdualCircleTimesxy0) {};
\coordinate (deltarhodualdualCircleTimesxy0pos1) at (0.7,8.175);
\coordinate (deltarhodualdualCircleTimesxy0pos2) at (4.,8.175);
\coordinate (deltarhodualdualCircleTimesxy0center) at (0.7,8.625);
\coordinate (deltarhodualdualCircleTimesxy0control) at (4.,8.625);
\draw (deltarhodualdualCircleTimesxy0pos1) to (deltarhodualdualCircleTimesxy0center);
\draw[densely dotted] (deltarhodualdualCircleTimesxy0pos2)..controls (deltarhodualdualCircleTimesxy0control) and (deltarhodualdualCircleTimesxy0control)..(deltarhodualdualCircleTimesxy0center);
\draw (0.7,5.85) node [minimum height=0] (deltaiddualdualCircleTimesxy1) {$$};
\draw (4.,5.85) node [] (deltaddualydualxCircleTimesxy0) {} +(-2.1,-0.225) coordinate (deltaddualydualxCircleTimesxy0pos1) arc (180:90:2.1) coordinate (deltaddualydualxCircleTimesxy0center) arc (90:0:2.1) coordinate (deltaddualydualxCircleTimesxy0pos6)
(deltaddualydualxCircleTimesxy0)+(-1.5,-0.225) coordinate (deltaddualydualxCircleTimesxy0pos2) arc (180:90:1.4) arc (90:0:1.4) coordinate (deltaddualydualxCircleTimesxy0pos5)
(deltaddualydualxCircleTimesxy0)+(-0.6,-0.225) coordinate (deltaddualydualxCircleTimesxy0pos3) arc (180:90:0.45) arc (90:0:0.45) coordinate (deltaddualydualxCircleTimesxy0pos4)
;
\draw (0.7,4.95) node [minimum height=0] (deltaiddualdualCircleTimesxy0) {$$};
\draw (1.9,4.95) node [minimum height=0] (deltaiddualy0) {$$};
\draw (2.5,4.95) node [minimum height=0] (deltaiddualx13) {$$};
\draw (3.4,4.95) node [draw, minimum width=0.8cm] (deltatensorxy0) {$1_{xy}$};
\draw (4.3,4.95) node [minimum height=0] (deltaiddualCircleTimesxy0) {$$};
\draw (5.3,4.95) node [minimum height=0] (deltaiddualdualx1) {$$};
\draw (6.1,4.95) node [minimum height=0] (deltaiddualdualy2) {$$};
\draw (2.5,2.7) node [] (deltabxydualCircleTimesxy0) {} +(-1.8,1.575) coordinate (deltabxydualCircleTimesxy0pos1) arc (180:270:1.8) coordinate (deltabxydualCircleTimesxy0center) arc (270:360:1.8) coordinate (deltabxydualCircleTimesxy0pos6)
(deltabxydualCircleTimesxy0)+(-0.6,1.575) coordinate (deltabxydualCircleTimesxy0pos2) arc (180:270:0.9) arc (270:360:0.9) coordinate (deltabxydualCircleTimesxy0pos5)
(deltabxydualCircleTimesxy0)+(0.,1.575) coordinate (deltabxydualCircleTimesxy0pos3) arc (180:270:0.3) arc (270:360:0.3) coordinate (deltabxydualCircleTimesxy0pos4)
;
\draw (5.3,2.7) node [minimum height=0] (deltaiddualdualx0) {$$};
\draw (6.1,2.7) node [minimum height=0] (deltaiddualdualy1) {$$};
\draw (3.9,1.8) node [] (deltaPowerlambdadualdualx-10) {};
\coordinate (deltaPowerlambdadualdualx-10control) at (2.5,1.575);
\coordinate (deltaPowerlambdadualdualx-10center) at (5.3,1.575);
\coordinate (deltaPowerlambdadualdualx-10pos1) at (2.5,2.475);
\coordinate (deltaPowerlambdadualdualx-10pos2) at (5.3,2.475);
\draw (deltaPowerlambdadualdualx-10center) to (deltaPowerlambdadualdualx-10pos2);
\draw[densely dotted] (2.5,2.475)..controls (deltaPowerlambdadualdualx-10control) and (deltaPowerlambdadualdualx-10control)..(deltaPowerlambdadualdualx-10center);
\draw (6.1,1.8) node [minimum height=0] (deltaiddualdualy0) {$$};
\draw (5.3,0.45) node [minimum height=0, minimum width=0.6cm] (deltadualdualx0) {$x^{**}$};
\draw (6.1,0.45) node [minimum height=0, minimum width=0.6cm] (deltadualdualy0) {$y^{**}$};
\draw[solid] ([xshift=0.cm]deltadualdualx0.north) to (deltaPowerlambdadualdualx-10center);
\draw[densely dotted] (deltaPowerlambdadualdualx-10pos1) to (deltabxydualCircleTimesxy0center);
\draw[solid] (deltabxydualCircleTimesxy0pos4) to node [right,font=\footnotesize] {$x$} ([xshift=-0.3cm]deltatensorxy0.south);
\draw[solid] (deltabxydualCircleTimesxy0pos5) to node [right,font=\footnotesize] {$y$} ([xshift=0.3cm]deltatensorxy0.south);
\draw[solid] (deltabxydualCircleTimesxy0pos2) to node [right,font=\footnotesize] {$y^*$} (deltaddualydualxCircleTimesxy0pos1);
\draw[solid] (deltabxydualCircleTimesxy0pos3) to node [right,font=\footnotesize] {$x^*$} (deltaddualydualxCircleTimesxy0pos2);
\draw[solid] ([xshift=0.cm]deltatensorxy0.north) to node [right,font=\footnotesize] {$xy$} (deltaddualydualxCircleTimesxy0pos3);
\draw[solid] (deltabxydualCircleTimesxy0pos6) to node [right,font=\footnotesize] {$(xy)^*$} (deltaddualydualxCircleTimesxy0pos4);
\draw[solid] (deltaPowerlambdadualdualx-10pos2) to (deltaddualydualxCircleTimesxy0pos5);
\draw[solid] ([xshift=0.cm]deltadualdualy0.north) to (deltaddualydualxCircleTimesxy0pos6);
\draw[solid] (deltabxydualCircleTimesxy0pos1) to node [right,font=\footnotesize] {$(xy)^{**}$} (deltarhodualdualCircleTimesxy0pos1);
\draw[densely dotted] (deltaddualydualxCircleTimesxy0center) to (deltarhodualdualCircleTimesxy0pos2);
\draw[solid] (deltarhodualdualCircleTimesxy0center) to ([xshift=0.cm]deltadualdualCircleTimesxy0.south);
 
\end{tikzpicture}}
\end{center}
The isomorphism $\Delta_2(x,y)$ measures the discrepancy between co-evaluations $c_x \otimes c_y$ and $c_{x \otimes y}$, and evaluations $e_{x^*} \otimes e_{y^*}$ and $e_{(x \otimes y)^*}$; there is no reason to assume co-evaluation and evaluation morphisms behave monoidally.  

When the fusion category $\cC$ is {\em skeletal}, $\Delta_1$ may be taken to be the identity $1_\cC$.  In the non-skeletal case, the double-dual is naturally isomorphic to the identity $1_\cC$ however not monoidally.  If all possible, we refrain from using $\Delta_1(x) \equiv x$ to make our proofs more easily applicable to non-skeletal categories.

By definition, $\Delta_2(\mathbf{1},y) = 1_y$ and $\Delta_2(x,\mathbf{1}) = 1_x$, and 
\begin{center} 
\raisebox{5cm}{$\Delta_2(x,y)^{-1}=$}
\scalebox{.6}{
}
}
\]

\end{proof}

\subsection{Pivotal and Spherical Structure} 

\label{ss:ps}

Let $\cC$ be a fusion category.  A {\em pivotal} structure on $\cC$ constitutes a monoidal natural isomorphism
\[ \eps : \Delta \xra{\ \cong\ } 1_\cC, \]
where $\Delta$ is the double dual functor discussed in \S\ref{ss:dd}.  To ensure $\eps$ is monoidal the following diagram must commute
\begin{equation}
\label{eq:eta1}
\xymatrix{
\Delta_1(x) \otimes \Delta_1(y) \ar[rr] ^{\eps(x) \otimes \eps(y)} \ar[d] _{\Delta_2(x,y)} && x \otimes y \ar[d] ^{1_{x \otimes y}} \\
\Delta_1(x \otimes y) \ar[rr] _{\eps(x \otimes y)} && x \otimes y
}\end{equation}
Pivotal structures have the following duality-respecting symmetry.
\begin{lemma}
\label{lem:eps_sym}
$ \eps_{x^*} = (\eps_x^{-1})^*$.
\end{lemma}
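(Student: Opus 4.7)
The plan is to convert the claim into an equation between morphisms into $\mathbf{1}$ where the monoidality of $\eps$ can be applied directly, then reduce to a purely structural identity for the double-dual functor.

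Recall that for any morphism $f: u \to v$ in a rigid category, the dual $f^*: v^* \to u^*$ is characterised uniquely by the equation
\[ e_u \circ (1_u \otimes f^*) = e_v \circ (f \otimes 1_{v^*}) \]
in $\cC(u \otimes v^*, \mathbf{1})$. Applying this with $f = \eps_x^{-1}: x \to x^{**}$, the identity $\eps_{x^*} = (\eps_x^{-1})^*$ is equivalent to $e_x \circ (1_x \otimes \eps_{x^*}) = e_{x^{**}} \circ (\eps_x^{-1} \otimes 1_{x^{***}})$. Precomposing both sides with $\eps_x \otimes 1_{x^{***}}$ and cancelling $\eps_x^{-1} \circ \eps_x = 1_{x^{**}}$, this reduces to the cleaner identity
\[ e_x \circ (\eps_x \otimes \eps_{x^*}) = e_{x^{**}} \quad \text{in } \cC(x^{**} \otimes x^{***}, \mathbf{1}). \]

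Next, I would invoke the monoidality of $\eps$ (diagram \eqref{eq:eta1}) to rewrite $\eps_x \otimes \eps_{x^*} = \eps_{x \otimes x^*} \circ \Delta_2(x, x^*)$, and naturality of $\eps$ applied to $e_x : x \otimes x^* \to \mathbf{1}$ together with $\eps_\mathbf{1} = 1_\mathbf{1}$ (the monoidal-coherence for units, coming from $\Delta_0 = 1_\mathbf{1}$) to rewrite $e_x \circ \eps_{x \otimes x^*} = \Delta_1(e_x) = (e_x)^{**}$. Combining these two rewrites reduces the claim to the single identity
\[ (e_x)^{**} \circ \Delta_2(x, x^*) = e_{x^{**}}. \]

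The main obstacle is this last identity, which is a purely structural fact that involves no pivotal data. Conceptually it expresses that the double-dual functor, as a strong monoidal functor on a rigid category, transports evaluation morphisms to evaluation morphisms; equivalently, the right-duality structure on $x^{**}$ obtained by transport through $\Delta_1, \Delta_2$ coincides with the canonical one $(x^{***}, e_{x^{**}}, c_{x^{**}})$. I would verify it either by expanding $\Delta_2(x, x^*)$ graphically according to the definition in Section 2.4 and collapsing the resulting diagram by repeated applications of RDA-I, RDA-II, the triangle identity, and far-commutativity (in the same style as the proof of Lemma \ref{lem:RA}), or more conceptually by invoking uniqueness of right duals up to canonical isomorphism. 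The delicate part is the bookkeeping of the interlaced coevaluations and evaluations of $x, x^*, x^{**}, x^{***}$ that appear once $\Delta_2$ is fully unfolded, where one must carefully track associators and unit isomorphisms before the cancellations become visible.
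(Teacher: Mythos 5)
Your proposal is correct and follows essentially the same route as the paper. You reduce the claim (via the defining property of $(-)^*$) to the identity $e_x \circ (\eps_x \otimes \eps_{x^*}) = e_{x^{**}}$, then use monoidality and naturality of $\eps$ to reduce further to the structural fact $\Delta_1(e_x) \circ \Delta_2(x,x^*) = e_{x^{**}}$; the paper derives exactly the same intermediate equation $e_a \circ (\eps_a \otimes \eps_{a^*}) = e_{a^{**}}$ from the monoidal-naturality square, identifies $\Delta_1(e_a) \circ \Delta_2(a,a^*)$ with $\Delta(a,a^*,\mathbf{1},e_a)$ via Lemma~\ref{lem:g_in_Delta}, and collapses it to $e_{a^{**}}$ by two applications of the right duality axiom, then runs your initial reduction in reverse (precompose with $\eps_a^{-1}\otimes 1$, tensor with $1_{a^*}$, precompose with $c_a\otimes 1$, apply RDA and the definition of the dual morphism). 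The one place you stop short is the final structural identity, which you correctly isolate but leave as an intention; invoking Lemma~\ref{lem:g_in_Delta} (so that the composite becomes $\Delta(x,x^*,\mathbf{1},e_x)$ with its explicit graphical form) makes the two RDA cancellations immediate and is cleaner than unfolding $\Delta_2$ from scratch. Your alternative ``uniqueness of right duals'' argument would need to be made precise, since $\Delta_1(x^*)$ and $x^{***}$ coincide on the nose and one must check the canonical comparison iso is actually the identity, which again amounts to the same graphical cancellation.
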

\begin{proof}
By $\eps$ being natural and monoidal:
\[
\xymatrix{
\Delta_1(a) \Delta_1(a^*) \ar[rr]^{\Delta_2(a,a^*)} \ar[d]^{\eps_a \otimes \eps_{a^*}} && 
\Delta_1(a \otimes a^*) \ar[rr]^{\Delta_1(e_a)} \ar[d]_{\eps_{a \otimes a^*}} && 
\Delta_1(\mathbf{1}) \ar[d]_{\eps_{\mathbf{1}}} \\
a \otimes a^* \ar[rr]_{1} &&
a \otimes a^* \ar[rr]_{e_a} &&
\mathbf{1}
}
\]
Then we have
\[ e_a \circ ( \eps_a \otimes \eps_{a^*}  ) = \Delta_1(a) \circ \Delta_2(a,a^*) \]
By Lemma \ref{lem:g_in_Delta},
\[ e_a \circ ( \eps_a \otimes \eps_{a^*}  ) = \Delta(a,a^*,\mathbf{1},e_a) \]
Using right duality axiom twice reduces the right-hand side to
\[ e_a \circ ( \eps_a \otimes \eps_{a^*}  ) = e_{\Delta_1(a)} \]
Pre-composing both sides with $\eps_a^{-1} \otimes 1$
\[ e_a \circ ( 1_a \otimes \eps_{a^*}  ) = e_{\Delta_1(a)} \circ (\eps_a^{-1} \otimes 1) \]
Tensoring with $1_{a^*}$ on the left and pre-compose with $c_a \otimes 1$
\[ (1_{a^*} \otimes e_a) \circ (1_{a^*} \otimes 1_a \otimes \eps_{a^*}) \circ (c_a \otimes 1) =
(1_{a^*} \otimes e_{\Delta_1(a)}) \circ (1_{a^*} \otimes \eps_{a}^{-1} \otimes 1) \circ (c_a \otimes 1) \]
where $\l$, $\r$ and $\a$ should be inserted where appropriate.  Then by right duality axiom applied to left-hand side, and the definition of the dual of a morphism applied to right-hand side we get
\[ \eps_{a^*} = (\eps_{a}^{-1})^* \]
\end{proof}
The {\em left quantum dimension}, $q_l(x) : \mathbf{1} \ra \mathbf{1}$, and {\em right quantum dimension}, $q_r(x) : \mathbf{1} \ra \mathbf{1}$, are defined to be
\begin{eqnarray}
q_l(x) & \defeq & e_x \circ (\eps(x) \otimes 1_{x^*}) \circ c_{x^*} \label{def:qlx} \\
q_r(x) & \defeq & e_{x^*} \circ (1_{x^*} \otimes \eps(x)^{-1})  \circ c_x \label{def:qrx}
\end{eqnarray}
As a consequence of Lemma \ref{lem:eps_sym}, we have
\[ q_l(x^*) = q_r(x) \]

In \cite{Mu}, M\"uger proves a pivotal structure on a fusion category is {\em spherical} if and only if  the left and right quantum dimensions of every object $x \in \cC$ are identical. When a pivotal structure in $\cC$ is spherical we take $q_x$ to be the number defined via
\begin{equation} 
q_r(x) = q_x 1_{\mathbf{1}} = q_l(x) 
\label{eq:qdim} 
\end{equation}
known as the {\em quantum dimension} of $x$.

\subsection{Balancing} 

\label{ss:balance}

A braiding $\b$ in a monoidal category $\cC$ is a natural isomorphism $\b : \otimes \ra \otimes^{\text{op}}$ which satisfies the two Hexagon relations
\begin{equation}
\label{eq:hex1}
\xymatrix{
a \otimes (b \otimes c) \ar[rr] ^{1 \times \b_{b,c}} \ar[d] _{\a_{a,b,c}} && a \otimes (c \otimes b) \ar[rr] ^{\a_{a,c,b}} && (a \otimes c) \otimes b \ar[d] ^{\b_{a,c} \times 1} \\
(a \otimes b) \otimes c \ar[rr] _{\b_{(a \otimes b),c}} && c \otimes (a \otimes b) \ar[rr] _{\a_{c,a,b}} && (c \otimes a) \otimes b
}
\end{equation}
\begin{equation}
\label{eq:hex2}
\xymatrix{
a \otimes (b \otimes c) \ar[rr] ^{1 \times \b_{c,b}^{-1}} \ar[d] _{\a_{a,b,c}} && a \otimes (c \otimes b) \ar[rr] ^{\a_{a,c,b}} && (a \otimes c) \otimes b \ar[d] ^{\b_{c,a}^{-1}  \times 1} \\
(a \otimes b) \otimes c \ar[rr] _{\b_{c,(a \otimes b)}^{-1}} && c \otimes (a \otimes b) \ar[rr] _{\a_{c,a,b}} && (c \otimes a) \otimes b}
\end{equation}
A monoidal category with a braiding is said to be {\em braided}.

A braided monoidal category $\cC$ is \emph{balanced} if it is equipped with a natural isomorphism of the identity functor $\t : 1_\cC \ra 1_\cC$, which satisfies for every $x,y \in \cC$
\begin{eqnarray}
\t_{x \otimes y} & = & \b_{y,x} \circ \b_{x,y} \circ \t_x \otimes \t_y \label{eq:balancing1} \\%
\t_{\mathbf{1}} & = & 1_{\mathbf{1}} \label{eq:balancing3}
\end{eqnarray}
Unless $\cC$ is symmetric, relation (\ref{eq:balancing1}) implies $\t$ is not monoidal.  A balanced category $\cC$ with right duals is {\em tortile} if $\t$ respects duality, namely,
\begin{equation} \t_{x^*} = (\t_x)^* \label{eq:balancing2} \end{equation}
A {\em balancing} $\t$ which is tortile is also known in the literature as a {\em twist}.\footnote{We define balancing, following Yetter's earlier terminology in \cite{Ye}.  In \cite{BK} a balancing is assumed automatically to be tortile, namely, a balancing according to \cite{BK} satisfies (\ref{eq:balancing1}), (\ref{eq:balancing3}) and (\ref{eq:balancing2}).  In light of present-day distinction between pivotal and spherical structures we find the recourse to Yetter's terminology justified.}

It is shown in \cite{Ye} that $\cC$ is balanced if and only if it has pivotal structure. The relation established between $\t$ and $\eps$ is given by 
\begin{equation} \t_x = \psi_x \circ \eps_x^{-1}
\label{def:balancing} 
\end{equation}
where $\psi$ is the natural isomorphism $\psi : \Delta \ra 1_\cC$ defined by tracing the following commutative diagram from $x^{**}$ on the left to $x$ on the right (adding $\a$, $\l$ and $\r$ as needed)
\begin{equation}
\xymatrix{
& x^{**} \otimes x^* \otimes x \ar[rr]^{\b^{-1} \otimes 1} \ar[dd]_{\b_{x^{**},x^*x}} && x^* \otimes x^{**} \otimes x \ar[dr]^{e_{x^*} \otimes 1} \ar[dd]^{\b^{-1}_{x^*x^{**},x}} &\\
x^{**} \ar[ur]^{1 \otimes c_x} \ar[dr]_{c_x \otimes 1} &&&& x \\
& x^* \otimes x \otimes x^{**} \ar[rr]_{\b^{-1} \otimes 1} \ar[rruu]^{1 \otimes \b} && x \otimes x^* \otimes x^{**} \ar[ur]_{1 \otimes e_{x^*}} &
}
\label{def:psi}
\end{equation}
Graphically, $\psi$ is given by any of the following three pictures:
\begin{center} 
\scalebox{.6}{
}

\end{center}

\noindent Therefore, for a simple object $a \in \cC$,
\[ q_l(a) = q_r(a) \Longleftrightarrow (\t_a)^* = \t_{a^*} \]
\end{proof}

\begin{remark} The proof above works for fusion categories with absolute simple objects.  One does not need the context of linear abelian categories to discuss spherical and tortile structures.  At the moment, we are not aware of a proof correlating spherical and tortile structures in the more general context of braided monoidal categories with right duals. \end{remark}

\subsection{Scalar Extension}

\label{sec:se} 

Let $\cC$ be a fusion category defined over $k$ and $\sigma:k \rightarrow k'$ a non-zero morphism of fields.  Define the scalar extension $\cC \otimes_{k}^\sigma k'$ of $\cC$ with respect to $\sigma$ to be the $k'$-linear category with a collection of objects
\[\text{Ob}(\cC \otimes_{k}^\sigma k') = \text{Ob}(\cC)\]
and for every pair of objects $x,y \in \text{Ob}(\cC \otimes_{k}^\sigma K)$, a morphism $k'$-space
\[
(\cC \otimes_{k}^\sigma k') (x,y) = \cC(x,y) \otimes_{k}^\sigma k'
\]
where the tensor product on the right hand side is taken with respect to $\sigma$.

\begin{remark}
The above definition of scalar extension needs refining for general $k$-linear abelian rigid monoidal categories.  If, for example, $G$ is a finite group, $k$ is a field of characteristic zero, which is not a splitting field for $G$, and $k \subset k'$ is a field extension, then $\Rep_k G \otimes_k k'$ might not be equivalent to $\Rep_{k'} G$; some irreducibles in $\Rep_k G$ might decompose over $k'$. However, if $k$ is not a splitting field for $G$, $\Rep_k G$ is not fusion by our definition, since there is necessarily an irreducible $V \in \Rep_kG$ such that $\Rep_kG(V,V) \neq k$. A more general notion of a scalar extension of $k$-linear abelian rigid monoidal categories was introduced in \cite{St}.
\end{remark}

When $k'=k$ Galois closed and $\sigma \in \Gal(k/\QQ)$ we refer to $\cC \otimes_{k}^\sigma k$ as the \emph{Galois twist} of $\cC$ with respect to $\sigma$ and denote it by $\cC^\sigma$.

\begin{lemma}
\label{lem:ext}
Let $\cC$ be a fusion (modular) category defined over $k$.  Then $\cC \otimes_{k}^\sigma k'$ is a fusion (modular) category defined over $k'$.
\end{lemma}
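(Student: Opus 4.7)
The plan is to verify each defining property of a fusion (resp.\ modular) category for $\cC \otimes_k^\sigma k'$ by transporting the corresponding structure along the scalar-extension functor $f \mapsto f \otimes 1$ from $\cC$. Since $\sigma$ is a nonzero morphism of fields it is injective, so this functor is faithful on Hom-spaces. The monoidal bi-functor $\otimes$, the associator $\a$, the unitors $\l, \r$, and the right duality data $(x^*, c_x, e_x)$ are all packages of morphisms in $\cC$; each transports to $\cC \otimes_k^\sigma k'$. The pentagon diagram (\ref{eqn:pent_diag}), the triangle diagram (\ref{eqn:tri_diag}), and the right duality axioms (RDA-I)/(RDA-II) are equational and hold in the extension because they hold in $\cC$; by Lemma \ref{lem:RA} this suffices for rigidity.

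The real content is the semisimple abelian structure. I would take a set $L$ of representatives of isomorphism classes of simple objects in $\cC$ and show it remains such a set in the extension. For each $a \in L$ one has
\[ (\cC \otimes_k^\sigma k')(a,a) \;=\; \cC(a,a) \otimes_k^\sigma k' \;=\; k \otimes_k^\sigma k' \;=\; k', \]
so $a$ is absolutely simple over $k'$; in particular, since $k'$ is a field it contains no non-trivial idempotent, ruling out any decomposition of $a$. For distinct $a,b \in L$, $\cC(a,b)=0$ persists under extension. The decomposition morphisms $\eta^x_{c,i}, \eta^{c,i}_x$ of \S\ref{ss:lac} and their orthogonality relations extend verbatim, so every object $x$ splits over $\cC \otimes_k^\sigma k'$ as the same direct sum of simples it split into over $\cC$. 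It follows that $\cC \otimes_k^\sigma k'$ is semisimple with the same finite set $L$ of simples, is therefore abelian, and has absolutely simple objects with $\End(\mathbf{1})=k'$, i.e.\ it is a fusion category over $k'$.

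For the modular case, the braiding $\b$, the twist $\t$, and the pivotal isomorphism $\eps$ are given by morphisms in $\cC$ satisfying the hexagon identities (\ref{eq:hex1})--(\ref{eq:hex2}), the balancing and tortile identities (\ref{eq:balancing1})--(\ref{eq:balancing2}), and the monoidality condition (\ref{eq:eta1}); all transport and remain valid. The one arithmetic point is the invertibility of the $S$-matrix: its entries $s_{ab} \in k$ become $\sigma(s_{ab}) \in k'$, so the determinant in the extension is $\sigma(\det S)$, which is non-zero because $\sigma$ is injective and $\det S \neq 0$. The main (and essentially only) obstacle worth mentioning is precisely the one flagged by the Remark preceding the lemma, namely the possibility that simple objects might split after extension; the fusion hypothesis $\cC(a,a) = k$ is exactly what rules this out.
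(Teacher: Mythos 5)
Your proof is correct and follows essentially the same route as the paper, which argues that $\cC$ is equivalent as a $k$-linear abelian category to a finite product of $\Vect_k$'s (so the extension is equivalent to a finite product of $\Vect_{k'}$'s), declares the rigid monoidal and ribbon structures to be a ``straightforward application of definitions,'' and notes the $S$-matrix of the extension is $\sigma(S)$, which is invertible when $S$ is. You unpack the part the paper leaves implicit: your observation that $\End(a) = k \otimes_k^\sigma k' = k'$ has no nontrivial idempotents, together with the persistence of the decomposition morphisms of \S\ref{ss:lac}, is exactly what makes the paper's ``$\prod \Vect_k \rightsquigarrow \prod \Vect_{k'}$'' step work, and your remark that the fusion hypothesis $\cC(a,a)=k$ is what forestalls splitting of simples is precisely the content of the paper's Remark following the lemma.
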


\begin{proof}
Let $\cC$ be a fusion category over $k$.  As a $k$-linear abelian category, $\cC$ is equivalent to a finite product of $\Vect_k$'s.  In turn, $\cC\otimes_k^\sigma k'$ is equivalent as a $k'$-linear abelian category to a finite product of $\Vect_{k'}$'s.  Showing $\cC \otimes_{k}^\sigma k'$ is rigid monoidal is a straight forward application of definitions, therefore $\cC \otimes_{k}^\sigma k'$ is a fusion category defined over $k'$. Assume in addition that $\cC$ is modular.  Showing $\cC \otimes_{k}^\sigma k'$ is ribbon is again a straight forward application of definitions.  The $S$-matrix associated with $\cC \otimes_{k}^\sigma k'$ is $\sigma(S)$ where $\sigma$ acts entry-wise and $S$ is the $S$-matrix of $\cC$.  If the later is invertible then so is the former, therefore $\cC \otimes_{k}^\sigma k'$ is modular.
\end{proof}

\section{Fusion and Modular Systems} 

\subsection{Fusion System.}
\begin{definition}
\label{def:fusion}
A \emph{fusion system} $(L,N,F)$ defined over $k$ consists of the following:
\begin{mylist}
\item \ \ A finite set $L$ with a distinguished element $\mathbf{1}\in L$.
\item \ \ An involution $*:L \rightarrow L$ such that $\mathbf{1}^*=\mathbf{1}$.
\item \ \ A collection of non-negative integers $N_{ab}^c$ associated with every triple $a,b,c\in L$ subject to the constraints
\begin{eqnarray}
&& N_{\mathbf{1} a}^b = \delta_{ab} = N_{a\mathbf{1}}^b \label{eqn:unit} \\
&& N_{ab}^{\mathbf{1}} = \delta_{a^*b} \label{eqn:dual} \\
&& N_{abc}^u := \sum_{e} N_{ab}^{e} N_{ec}^u =  \sum_{e'} N_{ae'}^u N_{bc}^{e'} \label{eqn:fusion}
\end{eqnarray}
\item \ \ For every quadruple $a,b,c,u\in L$, an invertible matrix $F_{abc}^u \in \Mat_{N_{abc}^u \times N_{abc}^u}(k)$, with each entry denoted
\[
\begin{array}{ccccccc}
F_{abc}^u \left[ \begin{array}{ccc} i & e & j \\ i' & e' & j' \end{array} \right]
& , &
	\begin{array}{l}
		e \in\ L \\
		e' \in L \rule{0ex}{3ex}
	\end{array}
& , &
	\begin{array}{l}
		i\in\{1,\ldots,N_{ae}^u\} \\
		i'\in\{1,\ldots,N_{ab}^{e'}\} \rule{0ex}{3ex}
	\end{array}
& , &
	\begin{array}{l}
		j\in\{1,\ldots,N_{bc}^e\} \\
		j'\in\{1,\ldots,N_{e'c}^u\} \rule{0ex}{3ex}
	\end{array}
\end{array}
\]
subject to the constraints
\begin{eqnarray} 
&& F_{a\mathbf{1}b}^u = I_{N_{ab}^u}  \label{eqn:triangle} \\ 
&& \rule{0ex}{6ex} u_a :=F_{a\cech{a}a}^{a}
\left[ \scalebox{.8}{$\begin{array}{ccc} 1 & 1 & 1 \\ 
1 & 1 & 1 \end{array}$} \right] \neq 0  \label{eqn:duality} \\  
&& 
\sum_{l=1}^{N_{eg}^u} 
F_{ecd}^u 
\left[ \scalebox{.6}{$\begin{array}{ccc} j & f & k \\ l & g & m \end{array}$} \right] 
F_{abg}^u 
\left[ \scalebox{.6}{$\begin{array}{ccc} i & e & l \\ n & h & o \end{array}$} \right] 
= \label{eqn:pentagon} \\  
&& 
\sum_{q \in L} \sum_{p=1}^{N_{aq}^f} \sum_{r=1}^{N_{bc}^q} \sum_{v=1}^{N_{qd}^h}
F_{abc}^f 
\left[ \scalebox{.6}{$\begin{array}{ccc} i & e & j \\ p & q & r \end{array}$} \right] 
 F_{aqd}^u
\left[ \scalebox{.6}{$\begin{array}{ccc} p & f & k \\ n & h & v \end{array}$} \right]
F_{bcd}^h
\left[ \scalebox{.6}{$\begin{array}{ccc} r & q & v \\ o & g & m \end{array}$} \right] 
\nonumber
\end{eqnarray}
\end{mylist}
for any $e,f,g,h \in L$, $i \in \{1,\ldots,N_{ab}^e\}$, $j \in \{1,\ldots,N_{ec}^f\}$, $k \in \{1,\ldots,N_{fd}^u\}$, $n \in \{1,\ldots,N_{ah}^u\}$, $o \in \{1,\ldots,N_{bg}^h\}$ and $m \in \{1,\ldots,N_{cd}^g\}$.
\end{definition}

\begin{remark} To actually make sense of the labeling of entries of $F_{abc}^u$ a bijection between the sets of labels $\{(i,e,j)\}$, $\{(i',e',j')\}$ and $\{1,\ldots,N_{abc}^u\}$ is required.  Such a bijection is possible for example if an order is chosen on the set $L$, and lexicographic order on $\{(i,e,j)\}$ and $\{(i',e',j')\}$ is established.   \end{remark}

\begin{example}
\label{ex:Fib}
Consider $L=\{\mathbf{1},x\}$ with fusion multiplicities $N_{ab}^c$ derived from $x^2 = \mathbf{1} + x$, namely, $N_{xx}^\mathbf{1} = 1 = N_{xx}^x$. In particular $x$ is self dual.  Denote
\[ z := F_{xxx}^\mathbf{1} \left[ \scalebox{.6}{$\begin{array}{ccc} 1 & x & 1 \\ 1 & x & 1 \end{array}$} \right] \]
\[ z_{11} := F_{xxx}^x \left[ \scalebox{.6}{$\begin{array}{ccc} 1 & \mathbf{1} & 1 \\ 1 & \mathbf{1} & 1 \end{array}$} \right] \,,\,
 z_{12} := F_{xxx}^x \left[ \scalebox{.6}{$\begin{array}{ccc} 1 & \mathbf{1} & 1 \\ 1 & x & 1 \end{array}$} \right] \,,\,
 z_{21} := F_{xxx}^x \left[ \scalebox{.6}{$\begin{array}{ccc} 1 & x & 1 \\ 1 & \mathbf{1} & 1 \end{array}$} \right] \,,\,
 z_{22} := F_{xxx}^x \left[ \scalebox{.6}{$\begin{array}{ccc} 1 & x & 1 \\ 1 & x & 1 \end{array}$} \right] \]
where $z \neq 0$ since $F_{xxx}^\mathbf{1}$ is invertible and $z_{11} \neq 0$ due to \eqref{eqn:duality}.  Equations \eqref{eqn:pentagon} then read for the case $a=b=c=d=x$ and $u=\mathbf{1}$
\[ 1 = z_{11}^2 + z_{12} z_{21} z \quad , \quad z^2 = z_{12}z_{21} + z_{22}^2 z \]
and for the case $a=b=c=d=x$ and $u=x$
\[z_{11} z^2 = z_{12} z_{21} \quad , \quad z_{11} = z_{12} z_{21} z \]
\[z_{22} z_{21} = z_{22} z_{21} z\quad , \quad z_{21} = z_{21} z_{11} + z_{22}^2 z_{21}\]
\[z_{22} z_{12} = z_{22} z_{12} z\ ,\ z_{12} = z_{11} z_{12} + z_{12} z_{22}^2 \ ,\ z_{22} = z_{21} z_{12} + z_{22}^3\]
As a result we have 
\[z=1 \ ,\ z_{11} = \pm z_{22} \ ,\ z_{11} + z_{11}^2 =1 \ ,\ z_{11} = z_{12} z_{21}\]
the requirement that $F_{xxx}^x$ is invertible is then automatically satisfied.  The case $z_{11} = -z_{22}$ results in the {\em Fibonacci theory} and its complex conjugate, and the case $z_{11} = z_{22}$ results in the {\em Yang-Lee theory} and its complex conjugate \cite{RSW}.  Further discussion of these theories can be found in \S\ref{sec:Gal}.
\end{example}

\begin{lemma}
\label{lem:fusion_extract}
Given a fusion category $\cC$ defined over $k$,  one can extract from it a fusion system defined over $k$.
\end{lemma}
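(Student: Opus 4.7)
The plan is to construct $(L, N, F)$ from $\cC$ and verify each axiom of Definition \ref{def:fusion} as a translation of a standard categorical identity already available in $\cC$. There are three blocks of work: setting up the combinatorial data $(L, N)$; defining the matrices $F$; and checking the three constraints \eqref{eqn:triangle}, \eqref{eqn:duality}, \eqref{eqn:pentagon}.

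First I would set up $(L, N)$. Choose a set $L$ of representatives of isomorphism classes of simple objects of $\cC$, containing the unit $\mathbf{1}$; it is finite because $\cC$ is fusion. The right dual $a^*$ of a simple object is simple, so there is a unique element of $L$ isomorphic to $a^*$; denote it by $a^*$ as well. This defines an involution fixing $\mathbf{1}$. Define $N_{ab}^c := \dim_k \cC(a \otimes b, c)$. The unit constraint \eqref{eqn:unit} follows because $\lambda_a$ and $\rho_a$ are isomorphisms. The duality constraint \eqref{eqn:dual} follows from the adjunction $\cC(a \otimes b, \mathbf{1}) \cong \cC(b, a^*)$ furnished by the evaluation/coevaluation of \S\ref{ss:dual}. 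The associativity \eqref{eqn:fusion} comes from computing $\dim_k \cC((a\otimes b)\otimes c, u)$ in two ways --- either by decomposing $a \otimes b$ into simples $e$, or by decomposing $b \otimes c$ into simples $e'$ and passing through $\alpha_{a,b,c}$ --- using the projector decomposition \eqref{eq:project}--\eqref{eq:inject} of \S\ref{ss:lac}.

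Next, I would define the $F$-matrices. For every pair of simples $a, b$ and every $e \in L$, fix an ordered basis $\{\eta_{ab}^{e,i}\}_{i=1}^{N_{ab}^e}$ of $\cC(a \otimes b, e)$. It is crucial to normalize these choices consistently near $\mathbf{1}$: when $b = \mathbf{1}$ and $e = a$, take the basis vector to be $\rho_a$; when $a = \mathbf{1}$ and $e = b$, take $\lambda_b$; and when $b = a^*$ and $e = \mathbf{1}$, take the evaluation $e_a$. For a quadruple $(a, b, c, u)$, the Hom space $\cC((a \otimes b) \otimes c, u)$ then acquires two natural bases of size $N_{abc}^u$: the first consists of the composites $\eta_{ec}^{u,j} \circ (\eta_{ab}^{e,i} \otimes 1_c)$ indexed by $(i, e, j)$, and the second of $\eta_{ae'}^{u,i'} \circ (1_a \otimes \eta_{bc}^{e',j'}) \circ \alpha_{a,b,c}^{-1}$ indexed by $(i', e', j')$. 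Define $F_{abc}^u$ to be the change-of-basis matrix from the second to the first; invertibility is automatic.

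Finally I would verify the three $F$-axioms. The triangle constraint \eqref{eqn:triangle} follows by applying the monoidal triangle axiom \eqref{eqn:tri_diag} inside $\cC((a \otimes \mathbf{1}) \otimes b, u)$ and invoking the unitor normalization. The pentagon constraint \eqref{eqn:pentagon} is obtained by applying the pentagon diagram \eqref{eqn:pent_diag} to $(a, b, c, d)$, expanding both routes in a chosen basis of $\cC(((a \otimes b) \otimes c) \otimes d, u)$, and matching coefficients; the right-hand side corresponds to the long path through the associator three times. The nonvanishing \eqref{eqn:duality} is the most delicate point: $u_a$ is precisely the coefficient, in the $\eta$-basis of $\cC((a \otimes a^*) \otimes a, a)$, of the composite produced by the snake in the first right duality axiom \eqref{eq:rda1}; since that composite equals $1_a \neq 0$, we get $u_a \neq 0$. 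The main obstacle I anticipate is not computational but notational: one must carry out the basis normalization near $\mathbf{1}$ and near duality pairs carefully enough that \eqref{eqn:triangle} and \eqref{eqn:duality} become transparent translations of the unitor and snake identities --- once that is arranged, the pentagon then follows mechanically.
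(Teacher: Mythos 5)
Your plan follows the same architecture as the paper's proof: choose $L$ with duality involution, set $N_{ab}^c = \dim_k \cC(a \otimes b, c)$, normalize the gauge near $\mathbf{1}$ and near duality pairs, define $F$ from the associator's action on the $(3,1)$ Hom spaces, and verify the constraints by translating the triangle, snake, and pentagon identities. That matches the paper in all essentials.

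However, your argument for the nonvanishing constraint \eqref{eqn:duality} doesn't parse. You write that ``$u_a$ is precisely the coefficient, in the $\eta$-basis of $\cC((a \otimes a^*) \otimes a, a)$, of the composite produced by the snake in the first right duality axiom,'' but the snake composite in \eqref{eq:rda1} lives in $\cC(a,a)$, not in $\cC((a\otimes a^*)\otimes a, a)$, so it has no coordinates in that basis; and ``since that composite equals $1_a \neq 0$, we get $u_a \neq 0$'' is then a non-sequitur. What actually carries the argument (as in the paper's computation) is that $F_{aa^*a}^a\!\left[\scalebox{.6}{$\begin{array}{ccc}1&1&1\\1&1&1\end{array}$}\right]$ is obtained by pairing the $(1,\mathbf{1},1)$ basis vector against the $(1,\mathbf{1},1)$ dual basis vector across $\alpha_{a,a^*,a}$; under the normalizations $\eta_{\mathbf{1}a}^{a,1} = \lambda_a$, $\eta_{aa^*}^{\mathbf{1},1} = e_a$, $\eta_{a,1}^{a\mathbf{1}} = \rho_a^{-1}$, and the observation that the dual basis vector $\eta_{\mathbf{1},1}^{a^*a} \in \cC(\mathbf{1}, a^* \otimes a)$ is necessarily a nonzero multiple $u_a c_a$ of the coevaluation (forced because both are nonzero elements of a one-dimensional Hom space), the whole composite collapses via RDA-I to $u_a \cdot 1_a$, giving $F_{aa^*a}^a[\ldots] = u_a \neq 0$. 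So RDA-I is indeed the right identity, as you suspected, but the nonvanishing comes from $u_a$ being the proportionality constant between two nonzero vectors in $\cC(\mathbf{1}, a^*\otimes a)$, together with RDA-I normalizing the remaining factor to $1_a$; you should make this step explicit. A smaller point: ``change-of-basis matrix from the second to the first'' is ambiguous about direction and could produce $F$ or its inverse; the paper pins down the convention via equation \eqref{eq:F}.
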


The fusion system to be extracted from $\cC$ depends uniquely on a choice of basis $\{\eta_{ab}^{c,i}\}_{i=1}^{N_{ab}^c}$ for each morphism $k$-space $\cC(a \otimes b,c)$.  This choice is restricted so that $\{\l_a\}$ and $\{\r_a\}$ are taken to be the bases for the morphism $k$-spaces $\cC(\mathbf{1} \otimes a,a)$ and $\cC(a \otimes \mathbf{1},a)$ respectively, and $\{e_a\}$ is taken to be the basis for the morphism $k$-space $\cC(a \otimes a^*, \mathbf{1})$.  We denote a fusion system extracted from $\cC$ by $(L,N,F)(\cC)$.

\begin{proof}  

Let $\cC$ be a fusion category defined over $k$.  Let $L$ be a set of representatives of isomorphism classes of simple objects in $\cC$ one of which is the neutral object $\mathbf{1} \in L$.  For any simple object, $a^{**}\cong a$ (see \cite{O}), and $\mathbf{1}^* \cong \mathbf{1}$.  Therefore, taking right duals defines an involution on $L$ which fixes $\mathbf{1}$.
For every triple $a,b,c\in L$ set
\[ N_{ab}^c := \dim_k \cC(a \otimes b,c) \]
known in the literature as {\em fusion multiplicities}.  Since $\mathbf{1} \otimes a \cong a \cong a \otimes \mathbf{1}$ we have $N_{\mathbf{1}a}^b = \dim_k \cC(a,b) = N_{a\mathbf{1}}^b$.  By simplicity $\dim_k \cC(a,b) = \d_{ab}$, implying the constraint in (\ref{eqn:unit}).  There is a natural isomorphism $\cC(a \otimes b, \mathbf{1}) \cong \cC(b,a^* \otimes \mathbf{1})$, implying, by simplicity of $a^*$ and $b$, the constraint in (\ref{eqn:dual}).
There are isomorphisms
\begin{eqnarray*}
&& \bigoplus_e \cC(a \otimes b,e) \otimes \cC(e \otimes c,u) \ra \cC((a \otimes b) \otimes c,u) 
\quad , \quad
f \otimes g \mapsto g \circ (f \otimes 1_c) \\
&& \bigoplus_{e'} \cC(a \otimes e',u) \otimes \cC(b \otimes c,e') \ra \cC(a \otimes (b \otimes c),u)
\quad , \quad
f' \otimes g' \mapsto f' \circ (1_a \otimes \, g')
\end{eqnarray*}
which by associativity of monoidal structure imply the constraint in (\ref{eqn:fusion}).

The associator $\a_{a,b,c} : a \otimes (b \otimes c) \ra (a \otimes b) \otimes c$ defines the pullback
\[ \a^*_{a,b,c} : \cC((a \otimes b) \otimes c,u) \ra \cC(a \otimes (b \otimes c),u) \]
for every $u \in L$.  Let us choose a basis $\{\eta_{ab}^{c,i}\}_{i=1}^{N_{ab}^c}$ for each morphism $k$-space $\cC(a \otimes b,c)$, making sure to pick $\{\l_a\}$ and $\{\r_a\}$ as bases for the morphism $k$-spaces $\cC(\mathbf{1} \otimes a,a)$ and $\cC(a \otimes \mathbf{1},a)$ respectively, and $\{e_a\}$ as a basis for the morphism $k$-space $\cC(a \otimes a^*, \mathbf{1})$.

Graphically, our conventions are:
\begin{center}
\scalebox{.6}{\begin{tikzpicture}
\makeatletter{}\draw (0.8,2.7) node [minimum height=0, minimum width=0.4cm] (showetac43) {$c$};
\draw (0.8,1.8) node [draw, minimum width=1.4cm] (showetashoweta1) {$\eta_{a b}^{c,i}$};
\draw (0.5,0.45) node [minimum height=0, minimum width=0.4cm] (showetaa62) {$a$};
\draw (1.1,0.45) node [minimum height=0, minimum width=0.4cm] (showetab48) {$b$};
\draw[solid] ([xshift=0.cm]showetaa62.north) to ([xshift=-0.3cm]showetashoweta1.south);
\draw[solid] ([xshift=0.cm]showetab48.north) to ([xshift=0.3cm]showetashoweta1.south);
\draw[solid] ([xshift=0.cm]showetashoweta1.north) to ([xshift=0.cm]showetac43.south);
 
\end{tikzpicture}}
\raisebox{1cm}{$=$}
\scalebox{.6}{\begin{tikzpicture}
\makeatletter{}\draw (0.6,2.7) node [minimum height=0, minimum width=0.4cm] (etaabcic8) {$c$};
\draw (0.6,1.8) node [draw, rounded corners=3pt, fill=gray!10, minimum width=0.8cm] (etaabcifabci1) {$i$};
\draw (0.3,0.45) node [minimum height=0, minimum width=0.4cm] (etaabcia10) {$a$};
\draw (0.9,0.45) node [minimum height=0, minimum width=0.4cm] (etaabcib8) {$b$};
\draw[solid] ([xshift=0.cm]etaabcia10.north) to ([xshift=-0.3cm]etaabcifabci1.south);
\draw[solid] ([xshift=0.cm]etaabcib8.north) to ([xshift=0.3cm]etaabcifabci1.south);
\draw[solid] ([xshift=0.cm]etaabcifabci1.north) to ([xshift=0.cm]etaabcic8.south);
 
\end{tikzpicture}}
\hspace{1 cm}
\scalebox{.6}{\begin{tikzpicture}
\makeatletter{}\draw (0.6,2.7) node [minimum height=0, minimum width=0.4cm] (showlambdaaa64) {$a$};
\draw (0.6,1.8) node [draw, minimum width=0.8cm] (showlambdaashowlambdaa1) {$\lambda_a$};
\draw (0.3,0.45) node [minimum height=0, minimum width=0.4cm] (showlambdaa144) {$1$};
\draw (0.9,0.45) node [minimum height=0, minimum width=0.4cm] (showlambdaaa63) {$a$};
\draw[densely dotted] ([xshift=0.cm]showlambdaa144.north) to ([xshift=-0.3cm]showlambdaashowlambdaa1.south);
\draw[solid] ([xshift=0.cm]showlambdaaa63.north) to ([xshift=0.3cm]showlambdaashowlambdaa1.south);
\draw[solid] ([xshift=0.cm]showlambdaashowlambdaa1.north) to ([xshift=0.cm]showlambdaaa64.south);
 
\end{tikzpicture}}
\raisebox{1cm}{$=$}
\scalebox{.6}{\begin{tikzpicture}
\makeatletter{}\draw (0.9,2.7) node [minimum height=0, minimum width=0.4cm] (lambdaaa66) {$a$};
\draw (0.6,1.8) node [] (lambdaalambdaa4) {};
\coordinate (lambdaalambdaa4pos1) at (0.3,1.575);
\coordinate (lambdaalambdaa4pos2) at (0.9,1.575);
\coordinate (lambdaalambdaa4control) at (0.3,2.025);
\coordinate (lambdaalambdaa4center) at (0.9,2.025);
\draw (lambdaalambdaa4pos2) to (lambdaalambdaa4center);
\draw[densely dotted] (0.3,1.575)..controls (lambdaalambdaa4control) and (lambdaalambdaa4control)..(lambdaalambdaa4center);
\draw (0.3,0.45) node [minimum height=0, minimum width=0.4cm] (lambdaa145) {$1$};
\draw (0.9,0.45) node [minimum height=0, minimum width=0.4cm] (lambdaaa65) {$a$};
\draw[densely dotted] ([xshift=0.cm]lambdaa145.north) to (lambdaalambdaa4pos1);
\draw[solid] ([xshift=0.cm]lambdaaa65.north) to (lambdaalambdaa4pos2);
\draw[solid] (lambdaalambdaa4center) to ([xshift=0.cm]lambdaaa66.south);
 
\end{tikzpicture}}
\hspace{1 cm}
\scalebox{.6}{\begin{tikzpicture}
\makeatletter{}\draw (0.6,2.7) node [minimum height=0, minimum width=0.4cm] (showrhoaa68) {$a$};
\draw (0.6,1.8) node [draw, minimum width=0.8cm] (showrhoashowrhoa3) {$\rho_a$};
\draw (0.3,0.45) node [minimum height=0, minimum width=0.4cm] (showrhoaa67) {$a$};
\draw (0.9,0.45) node [minimum height=0, minimum width=0.4cm] (showrhoa146) {$1$};
\draw[solid] ([xshift=0.cm]showrhoaa67.north) to ([xshift=-0.3cm]showrhoashowrhoa3.south);
\draw[densely dotted] ([xshift=0.cm]showrhoa146.north) to ([xshift=0.3cm]showrhoashowrhoa3.south);
\draw[solid] ([xshift=0.cm]showrhoashowrhoa3.north) to ([xshift=0.cm]showrhoaa68.south);
 
\end{tikzpicture}}
\raisebox{1cm}{$=$}
\scalebox{.6}{\begin{tikzpicture}
\makeatletter{}\draw (0.3,2.7) node [minimum height=0, minimum width=0.4cm] (rhoaa70) {$a$};
\draw (0.6,1.8) node [] (rhoarhoa3) {};
\coordinate (rhoarhoa3pos1) at (0.3,1.575);
\coordinate (rhoarhoa3pos2) at (0.9,1.575);
\coordinate (rhoarhoa3center) at (0.3,2.025);
\coordinate (rhoarhoa3control) at (0.9,2.025);
\draw (rhoarhoa3pos1) to (rhoarhoa3center);
\draw[densely dotted] (rhoarhoa3pos2)..controls (rhoarhoa3control) and (rhoarhoa3control)..(rhoarhoa3center);
\draw (0.3,0.45) node [minimum height=0, minimum width=0.4cm] (rhoaa69) {$a$};
\draw (0.9,0.45) node [minimum height=0, minimum width=0.4cm] (rhoa147) {$1$};
\draw[solid] ([xshift=0.cm]rhoaa69.north) to (rhoarhoa3pos1);
\draw[densely dotted] ([xshift=0.cm]rhoa147.north) to (rhoarhoa3pos2);
\draw[solid] (rhoarhoa3center) to ([xshift=0.cm]rhoaa70.south);
 
\end{tikzpicture}}
\hspace{1 cm}
\scalebox{.6}{\begin{tikzpicture}
\makeatletter{}\draw (0.6375,2.7) node [minimum height=0, minimum width=0.4cm] (showepsilona148) {$1$};
\draw (0.6375,1.8) node [draw, minimum width=0.875cm] (showepsilonashowepsilona1) {$\epsilon_a$};
\draw (0.3,0.45) node [minimum height=0, minimum width=0.4cm] (showepsilonaa71) {$a$};
\draw (0.975,0.45) node [minimum height=0, minimum width=0.55cm] (showepsilonaduala4) {$a^*$};
\draw[solid] ([xshift=0.cm]showepsilonaa71.north) to ([xshift=-0.3375cm]showepsilonashowepsilona1.south);
\draw[solid] ([xshift=0.cm]showepsilonaduala4.north) to ([xshift=0.3375cm]showepsilonashowepsilona1.south);
\draw[densely dotted] ([xshift=0.cm]showepsilonashowepsilona1.north) to ([xshift=0.cm]showepsilona148.south);
 
\end{tikzpicture}}
\raisebox{1cm}{$=$}
\scalebox{.6}{\begin{tikzpicture}
\makeatletter{}\draw (0.6375,2.5875) node [minimum height=0, minimum width=0.4cm] (epsilona149) {$1$};
\draw (0.6375,1.8) node [] (epsilonada10) {} +(-0.3375,-0.225) coordinate (epsilonada10pos1) arc (180:90:0.3375) coordinate (epsilonada10center) arc (90:0:0.3375) coordinate (epsilonada10pos2)
;
\draw (0.3,0.45) node [minimum height=0, minimum width=0.4cm] (epsilonaa72) {$a$};
\draw (0.975,0.45) node [minimum height=0, minimum width=0.55cm] (epsilonaduala5) {$a^*$};
\draw[solid] ([xshift=0.cm]epsilonaa72.north) to (epsilonada10pos1);
\draw[solid] ([xshift=0.cm]epsilonaduala5.north) to (epsilonada10pos2);
\draw[densely dotted] (epsilonada10center) to ([xshift=0.cm]epsilona149.south);
 
\end{tikzpicture}}
\end{center}

 Such a choice implies bases 
\begin{eqnarray*}
&& \{\eta_{ec}^{u,j} \circ (\eta_{ab}^{e,i} \otimes 1_c)\}_{i,j=1}^{N_{ab}^e,N_{ec}^u} \subset \cC((a \otimes b) \otimes c,u) \\
&& \{\eta_{ae'}^{u,i'} \circ (1_a \otimes \eta_{bc}^{e',j'})\}_{i',j'=1}^{N_{ae'}^u,N_{bc}^{e'}} \subset \cC(a \otimes (b \otimes c),u)
\end{eqnarray*}

We let $F_{abc}^u$ be the matrix representing $\a^*_{a,b,c}$ in these respective bases.  Specifically,
\begin{equation}
\label{eq:F}
\eta_{ec}^{u,j} \circ (\eta_{ab}^{e,i} \otimes 1_c) \circ \a_{a,b,c} = \sum_{e' \in L} \sum_{i'=1}^{N_{ae'}^u} \sum_{j'=1}^{N_{bc}^{e'}} F_{abc}^u \left[ \scalebox{.6}{$\begin{array}{ccc} i & e & j \\ i' & e' & j' \end{array}$} \right] \eta_{ae'}^{u,i'} \circ (1_a \otimes \eta_{bc}^{e',j'})
\end{equation}
\begin{equation*}
\raisebox{1cm}{or graphically:   }
\scalebox{.6}{\begin{tikzpicture}
\makeatletter{}\draw (1.05,4.5) node [minimum height=0, minimum width=0.4cm] (leftbasisu10) {$u$};
\draw (1.05,3.6) node [draw, rounded corners=3pt, fill=gray!10, minimum width=1.1cm] (leftbasisfecuj0) {$j$};
\draw (0.6,2.7) node [draw, rounded corners=3pt, fill=gray!10, minimum width=0.8cm] (leftbasisfabei0) {$i$};
\draw (1.5,2.7) node [minimum height=0] (leftbasisidc7) {$$};
\draw (0.9,1.8) node [draw, minimum width=1.4cm] (leftbasisSubscriptalphaabc4) {$\alpha_{a,b,c}$};
\draw (0.3,0.45) node [minimum height=0, minimum width=0.4cm] (leftbasisa30) {$a$};
\draw (0.9,0.45) node [minimum height=0, minimum width=0.4cm] (leftbasisb20) {$b$};
\draw (1.5,0.45) node [minimum height=0, minimum width=0.4cm] (leftbasisc20) {$c$};
\draw[solid] ([xshift=0.cm]leftbasisa30.north) to ([xshift=-0.6cm]leftbasisSubscriptalphaabc4.south);
\draw[solid] ([xshift=0.cm]leftbasisb20.north) to ([xshift=0.cm]leftbasisSubscriptalphaabc4.south);
\draw[solid] ([xshift=0.cm]leftbasisc20.north) to ([xshift=0.6cm]leftbasisSubscriptalphaabc4.south);
\draw[solid] ([xshift=-0.6cm]leftbasisSubscriptalphaabc4.north) to node [right,font=\footnotesize] {$a$} ([xshift=-0.3cm]leftbasisfabei0.south);
\draw[solid] ([xshift=0.cm]leftbasisSubscriptalphaabc4.north) to node [right,font=\footnotesize] {$b$} ([xshift=0.3cm]leftbasisfabei0.south);
\draw[solid] ([xshift=0.cm]leftbasisfabei0.north) to node [right,font=\footnotesize] {$e$} ([xshift=-0.45cm]leftbasisfecuj0.south);
\draw[solid] ([xshift=0.6cm]leftbasisSubscriptalphaabc4.north) to node [right,font=\footnotesize] {$c$} ([xshift=0.45cm]leftbasisfecuj0.south);
\draw[solid] ([xshift=0.cm]leftbasisfecuj0.north) to ([xshift=0.cm]leftbasisu10.south);
 
\end{tikzpicture}}
\raisebox{1cm}{$=\sum_{e' \in L} \sum_{i'=1}^{N_{ae'}^u} \sum_{j'=1}^{N_{bc}^{e'}} F_{abc}^u \left[\scalebox{.6}{$\begin{array}{ccc} i & e & j \\ i' & e' & j' \end{array}$} \right]$}
\scalebox{.6}{\begin{tikzpicture}
\makeatletter{}\draw (0.75,3.6) node [minimum height=0, minimum width=0.4cm] (rightbasisu11) {$u$};
\draw (0.75,2.7) node [draw, rounded corners=3pt, fill=gray!10, minimum width=1.1cm] (rightbasisfaDerivative1euDerivative1i0) {$i'$};
\draw (0.3,1.8) node [minimum height=0] (rightbasisida35) {$$};
\draw (1.2,1.8) node [draw, rounded corners=3pt, fill=gray!10, minimum width=0.8cm] (rightbasisfbcDerivative1eDerivative1j0) {$j'$};
\draw (0.3,0.45) node [minimum height=0, minimum width=0.4cm] (rightbasisa31) {$a$};
\draw (0.9,0.45) node [minimum height=0, minimum width=0.4cm] (rightbasisb21) {$b$};
\draw (1.5,0.45) node [minimum height=0, minimum width=0.4cm] (rightbasisc21) {$c$};
\draw[solid] ([xshift=0.cm]rightbasisb21.north) to ([xshift=-0.3cm]rightbasisfbcDerivative1eDerivative1j0.south);
\draw[solid] ([xshift=0.cm]rightbasisc21.north) to ([xshift=0.3cm]rightbasisfbcDerivative1eDerivative1j0.south);
\draw[solid] ([xshift=0.cm]rightbasisa31.north) to ([xshift=-0.45cm]rightbasisfaDerivative1euDerivative1i0.south);
\draw[solid] ([xshift=0.cm]rightbasisfbcDerivative1eDerivative1j0.north) to node [right,font=\footnotesize] {$e'$} ([xshift=0.45cm]rightbasisfaDerivative1euDerivative1i0.south);
\draw[solid] ([xshift=0.cm]rightbasisfaDerivative1euDerivative1i0.north) to ([xshift=0.cm]rightbasisu11.south);
 
\end{tikzpicture}}
\end{equation*}
\begin{center}
\raisebox{2cm}{$=$}
\end{center}

The coefficients in (\ref{eq:F}) are known as the \emph{$F$-matrix coefficients}.  Note that $\cC$ being strictly associative does not imply $F_{abc}^u$ is necessarily the identity matrix.  

Recall $N_{a \mathbf{1}}^a = 1 = N_{\mathbf{1} b}^b$ and we chose $\eta_{a\mathbf{1}}^{a,1} = \r_a$, $\eta_{\mathbf{1} b}^{b,1} = \l_b$.  The triangular diagram (\ref{eqn:tri_diag}) implies for every 
$j \in \{1,\ldots,N_{ab}^u\}$
\[ \eta_{ab}^{u,j} \circ (\eta_{a\mathbf{1}}^{a,1} \otimes 1_b) \circ \a_{a,\mathbf{1},b} = 
\eta_{ab}^{u,j} \circ (1_a \otimes \eta_{\mathbf{1} b}^{b,1} ). \]

\begin{center}
\raisebox{1cm}{or graphically:}
\scalebox{.6}{
$} \right] 
\nonumber
\end{eqnarray}
which establishes the constraint in (\ref{eqn:pentagon}).  This concludes the proof.
\end{proof} 

\begin{remark} Since the commutativity of one triangular diagram implies the commutativity of the two other triangular diagrams (see \S\ref{ss:dual}) equations \eqref{eqn:triangle} and  \eqref{eqn:pentagon} imply the two other triangle identities: $F_{\mathbf{1}ab}^u = I_{N_{ab}^u}$ and $F_{ab\mathbf{1}}^u = I_{N_{ab}^u}$. \label{rem:triangle} \end{remark}

A fusion system $(L,N,F)(\cC)$, extracted from a fusion category $\cC$, has an additional symmetry.  Let $G_{abc}^u$ denote the matrix representing $(\a^{-1}_{a,b,c})^*$ in the chosen gauge. 
By definition,
\[ G_{abc}^u \left[ \scalebox{.6}{$\begin{array}{ccc} i & e & j \\ i' & e' & j' \end{array}$} \right] 1_u = 
\eta_{ae}^{u,i} \circ (1_a \otimes \eta_{bc}^{e,j}) \circ \a^{-1}_{a,b,c} \circ (\eta_{e',i'}^{ab} \otimes 1_c) \circ \eta_{u,j'}^{e'c} \]
Let us define
\[ v_a \defeq G_{aa^*a}^{a} \left[ \scalebox{.6}{$\begin{array}{ccc} 1 & \mathbf{1} & 1 \\ 1 & \mathbf{1} & 1 \end{array}$} \right] \]

\begin{lemma}
$ v_{a^*} = u_a$.
\end{lemma}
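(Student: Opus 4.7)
The plan is to expand $v_{a^*}$ using the definition of $G$ given just above the statement, substitute the chosen basis elements from the gauge conventions fixed in Lemma \ref{lem:fusion_extract}, and recognize what remains as $u_a$ times the second right duality axiom (RDA-II), which holds automatically by Lemma \ref{lem:RA}.

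First I would unfold the definition. Since $*$ is an involution on $L$, we have $(a^*)^* = a$, so the coefficient $v_{a^*}$ is extracted from $G_{a^* a a^*}^{a^*}$ via
\[
v_{a^*} \cdot 1_{a^*} \;=\; \eta_{a^*\mathbf{1}}^{a^*,1} \circ (1_{a^*} \otimes \eta_{aa^*}^{\mathbf{1},1}) \circ \alpha^{-1}_{a^*,a,a^*} \circ (\eta_{\mathbf{1},1}^{a^*a} \otimes 1_{a^*}) \circ \eta_{a^*,1}^{\mathbf{1}a^*}.
\]
The conventions chosen in the proof of Lemma \ref{lem:fusion_extract} give $\eta_{a^*\mathbf{1}}^{a^*,1} = \rho_{a^*}$, $\eta_{\mathbf{1}a^*}^{a^*,1} = \lambda_{a^*}$ (so its dual basis element $\eta_{a^*,1}^{\mathbf{1}a^*}$ is $\lambda_{a^*}^{-1}$), and $\eta_{aa^*}^{\mathbf{1},1} = e_a$. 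Crucially, that same proof also derives, from $\{e_a\}$ being chosen as the fixed basis of $\cC(a\otimes a^*,\mathbf{1})$ together with the dual basis property, the scalar identity $\eta_{\mathbf{1},1}^{a^*a} = u_a\, c_a$.

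Substituting these four identifications into the unfolded expression yields
\[
v_{a^*} \cdot 1_{a^*} \;=\; u_a \bigl[\, \rho_{a^*} \circ (1_{a^*} \otimes e_a) \circ \alpha^{-1}_{a^*,a,a^*} \circ (c_a \otimes 1_{a^*}) \circ \lambda_{a^*}^{-1} \,\bigr].
\]
The bracketed composite is exactly the second right duality axiom (RDA-II) for the object $a$ with right dual $a^*$ as written in \eqref{eq:rda2}. By Lemma \ref{lem:RA}, in our $k$-linear semisimple abelian monoidal setting RDA-I implies RDA-II, so the bracket equals $1_{a^*}$ and we conclude $v_{a^*} = u_a$.

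There is no real obstacle here; the proof is essentially bookkeeping in the chosen gauge. The one point to be careful about is that $v_{a^*}$ is read off from $G_{a^*(a^*)^*a^*}^{a^*}$ and one must use the involution identity $(a^*)^* = a$ in $L$ to see that the relevant basis vectors are precisely $e_a$ and $u_a c_a$ — rather than their $a \leftrightarrow a^*$ swapped counterparts — which is what allows RDA-II for $a$ (not for $a^*$) to appear on the nose.
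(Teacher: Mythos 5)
Your proposal is correct and follows the paper's proof essentially verbatim: unfold the definition of $G_{a^*aa^*}^{a^*}$ using $(a^*)^*=a$, substitute the gauge conventions $\eta_{a^*\mathbf{1}}^{a^*,1}=\rho_{a^*}$, $\eta_{a^*,1}^{\mathbf{1}a^*}=\lambda_{a^*}^{-1}$, $\eta_{aa^*}^{\mathbf{1},1}=e_a$, $\eta_{\mathbf{1},1}^{a^*a}=u_a\,c_a$, and recognize the remaining composite as RDA-II applied to $a$. The only cosmetic difference is that you explicitly invoke Lemma~\ref{lem:RA} to justify RDA-II, whereas the paper simply appeals to RDA-II directly (which already holds since $\cC$ is a fusion category); both justifications are fine.
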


\begin{proof}
By definition,
\[ G_{a^*aa^*}^{a^*} \left[ \scalebox{.6}{$\begin{array}{ccc} 1 & \mathbf{1} & 1 \\ 1 & \mathbf{1} & 1 \end{array}$} \right] 1_{a^*} = 
\eta_{a^*\mathbf{1}}^{a^*,1} \circ (1_{a^*} \otimes \eta_{aa^*}^{\mathbf{1},1}) \circ \a^{-1}_{a^*,a,a^*} \circ (\eta_{\mathbf{1},1}^{a^*a} \otimes 1_{a^*}) \circ \eta_{a^*,1}^{\mathbf{1}a^*} \]
By our choice of bases $\eta_{a^*\mathbf{1}}^{a^*,1}=\r_{a^*}$, $\eta_{a^*,1}^{\mathbf{1}a^*} = \l_{a^*}^{-1}$, $\eta_{aa^*}^{\mathbf{1},1} = e_a$ and $\eta_{\mathbf{1},1}^{a^*a} = u_a c_a$.
Therefore,
\[ G_{a^*aa^*}^{a^*} \left[ \scalebox{.6}{$\begin{array}{ccc} 1 & \mathbf{1} & 1 \\ 1 & \mathbf{1} & 1 \end{array}$} \right] 1_{a^*} = u_a
\r_{a^*} \circ (1_{a^*} \otimes e_a) \circ \a^{-1}_{a^*,a,a^*} \circ (c_a \otimes 1_{a^*}) \circ \l_{a^*}^{-1} \]
By (RDA-II),
\[ \r_{a^*} \circ (1_{a^*} \otimes e_a) \circ \a^{-1}_{a^*,a,a^*} \circ (c_a \otimes 1_{a^*}) \circ \l_{a^*}^{-1} = 1_{a^*} \]
Therefore,
\[ G_{a^*aa^*}^{a^*} \left[ \scalebox{.6}{$\begin{array}{ccc} 1 & \mathbf{1} & 1 \\ 1 & \mathbf{1} & 1 \end{array}$} \right] = v_{a^*} = u_a = 
F_{aa^*a}^a \left[ \scalebox{.6}{$\begin{array}{ccc} 1 & 1 & 1 \\ 1 & 1 & 1 \end{array}$} \right] \]
\end{proof}

\begin{prop}
\label{prop:fusion}
Given a fusion system $(L,N,F)$ defined over $k$, one can construct from it a fusion category $\cC(L,N,F)$ over $k$, such that $\cC((L,N,F)(\cC)) \simeq \cC $ as fusion categories.
\end{prop}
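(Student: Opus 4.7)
The plan is to construct $\cC(L,N,F)$ explicitly as a skeletal $k$-linear category whose isomorphism classes of simple objects are indexed by $L$, then impose tensor, unit, associator, and duality data directly from the fusion system. First I would define objects to be formal finite direct sums $\bigoplus_{a \in L} V_a \otimes a$ where each $V_a$ is a finite-dimensional $k$-vector space; morphisms between simples are set by $\cC(a,b) = k\cdot\delta_{ab}$, and the general morphism spaces are obtained by $k$-linear and biadditive extension. The neutral object is the distinguished $\mathbf{1}\in L$. This already makes $\cC(L,N,F)$ a $k$-linear semisimple abelian category with absolutely simple objects and finitely many isomorphism classes.

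Next I would install the monoidal structure. On simples I set $a\otimes b := \bigoplus_{c\in L} c^{\oplus N_{ab}^c}$, and fix once and for all a distinguished basis $\{\eta_{ab}^{c,i}\}_{i=1}^{N_{ab}^c}$ of $\cC(a\otimes b,c)$ with dual basis $\{\eta_{c,i}^{ab}\}$ in $\cC(c,a\otimes b)$. The associator on simples is defined by \emph{declaring} the formula
\[
\eta_{ec}^{u,j}\circ(\eta_{ab}^{e,i}\otimes 1_c)\circ \alpha_{a,b,c}
 = \sum_{e',i',j'} F_{abc}^{u}\!\left[\scalebox{.6}{$\begin{array}{ccc} i & e & j \\ i' & e' & j' \end{array}$}\right]\eta_{ae'}^{u,i'}\circ(1_a\otimes \eta_{bc}^{e',j'}),
\]
which makes sense because $F_{abc}^u$ is invertible. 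Bifunctoriality of $\otimes$ and naturality of $\alpha$ on arbitrary objects is then forced by Lemma \ref{lem:nonsense} (linear extension). The unitors are taken to be $\l_a = \eta_{\mathbf{1}a}^{a,1}$ and $\r_a = \eta_{a\mathbf{1}}^{a,1}$. The triangle identity \eqref{eqn:tri_diag} reduces on basis morphisms to the equation $F_{a\mathbf{1}b}^u = I$ recorded in \eqref{eqn:triangle} (using also Remark \ref{rem:triangle}), and the pentagon identity reduces, after expanding both sides in the chosen bases exactly as in the proof of Lemma \ref{lem:fusion_extract}, to the scalar identity \eqref{eqn:pentagon}. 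Both therefore hold by hypothesis.

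For rigidity, I would set the dual of a simple $a$ to be $a^*$ (the involution in $L$), take $e_a := \eta_{aa^*}^{\mathbf{1},1}$ as evaluation and $c_a := u_a^{-1}\eta_{\mathbf{1},1}^{a^*a}$ as coevaluation, which is legitimate since $u_a\ne 0$ by \eqref{eqn:duality}. Reading the composite \eqref{eq:rda1} through the bases, the same picture computation as in the last part of the proof of Lemma \ref{lem:fusion_extract} reduces (RDA-I) to the scalar equation $u_a^{-1}\cdot F_{aa^*a}^a\!\left[\scalebox{.6}{$\begin{smallmatrix} 1 & \mathbf{1} & 1\\ 1 & \mathbf{1} & 1\end{smallmatrix}$}\right] = 1$, which holds by definition of $u_a$. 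Lemma \ref{lem:RA} then supplies (RDA-II) for free. By the discussion in \S\ref{ss:dual}, right duality together with the involution $*$ on $L$ gives left duals as well, so $\cC(L,N,F)$ is a fusion category defined over $k$.

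Finally, for the round-trip equivalence $\cC((L,N,F)(\cC))\simeq \cC$, starting from a fusion category $\cC$ with chosen representative set $L$ of simples and chosen bases $\{\eta_{ab}^{c,i}\}$ producing $(L,N,F)(\cC)$, I would define $\Phi:\cC((L,N,F)(\cC))\to \cC$ on objects by sending the formal direct sum to the corresponding direct sum in $\cC$ and on the distinguished basis morphisms to the chosen basis morphisms of $\cC$, then extend $k$-linearly. By construction $\Phi$ is essentially surjective and fully faithful on simples, hence fully faithful by semisimplicity. The monoidal coherence data of $\Phi$ (the isomorphisms $\Phi(x\otimes y)\cong \Phi(x)\otimes \Phi(y)$) are taken to be identities on simples, and the required compatibility with associators holds because the $F$-matrices of $\cC((L,N,F)(\cC))$ were defined to be precisely the $F$-matrices extracted from $\cC$ in Lemma \ref{lem:fusion_extract}; the triangle/pentagon diagrams match on the nose. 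Thus $\Phi$ is an equivalence of fusion categories. The bookkeeping step that I expect to be most delicate is extending the associator from simples to arbitrary objects so that bifunctoriality of $\otimes$ and naturality of $\alpha$ both hold simultaneously; this is exactly what Lemma \ref{lem:nonsense} is designed to handle, so it should go through without surprises.
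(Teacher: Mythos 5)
Your construction takes essentially the same route as the paper: build a skeletal semisimple $k$-linear category with simple isoclasses indexed by $L$, use Lemma~\ref{lem:nonsense} to extend the associator from its value on simples (read off from the $F$-matrices) to a natural isomorphism, verify the triangle from \eqref{eqn:triangle} and the pentagon by running the computation of Lemma~\ref{lem:fusion_extract} backwards, install duals via the involution with $c_a := u_a^{-1}\eta_{\mathbf{1},1}^{a^*a}$ so that (RDA-I) reduces to $u_a^{-1}\cdot u_a=1$, and invoke Lemma~\ref{lem:RA} for (RDA-II). That all matches the paper.

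One small slip in the last step: the monoidal structure maps of the comparison functor $\Phi$ cannot be ``identities on simples.'' The object $\Phi(a\otimes b)$ is the chosen decomposition $\bigoplus_{u}u^{\oplus N_{ab}^u}$ inside $\cC$, whereas $\Phi(a)\otimes\Phi(b)$ is the actual tensor product $a\otimes b$ of $\cC$, and (unless $\cC$ is already skeletal with that specific decomposition) these are distinct objects that are merely isomorphic. The coherence data must be taken to be the isomorphism $\Phi_2(a,b)=\sum_{u\in L}\sum_{i=1}^{N_{ab}^u}\eta_{ab}^{u,i}$ determined by the chosen gauge, exactly because it is this choice that makes the associator square commute by the \emph{definition} of $F_{abc}^u$ in \eqref{eq:F}. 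You do identify the right reason the diagrams commute (the $F$-matrices of $\cC((L,N,F)(\cC))$ agree with those extracted from $\cC$), so this is a correctable imprecision rather than a gap, but the coherence isomorphisms need to be the chosen basis morphisms, not identities.
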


\begin{proof} 

Let $(L,N,F)$ be a fusion system defined over $k$.  Define a category
\[ \cC:=\prod_{a\in L} (\Vect_k)^{\mb{skel}} \]
where $(\Vect_k)^{\mb{skel}}$ is the skeleton of the category of finite-dimensional $k$-vector spaces.  Alternately, one can view $\cC$ as the skeleton of the category $\Vect_k^L$ of vector bundles over the set $L$.  
\[ Ob(\cC) = \prod_{a\in L} \NN \cup \{0\}= \{f : L \rightarrow \NN \cup \{0\}\} \]
and for every $f,f' \in Ob(\cC)$,
\[ \cC(f,f')=\bigoplus_{a\in L} \Mat_{f'(a)\times f(a) }(k). \]
Composition of morphisms is given by matrix multiplication
\[ \cC(f',f'') \times \cC(f,f') \lra \cC(f,f'') \]
\[ \left( \sum_{a \in L} A'(a) \right) \circ \left( \sum_{a \in L} A(a) \right) = \sum_{a \in L} A'(a) A(a) \]
and the identity: $I = \sum_{a \in L} I_{f(a) \times f(a)}$.

By construction, $\cC$ is a skeletal semi-simple abelian category defined over $k$, with simple objects being the delta functions $\{\d_a\}_{a\in L}$.  To avoid cumbersome notation, we write $a \in \cC$ to mean $\d_a \in \cC$.

\noindent \textbf{Monoidal Structure:}  An additive monoidal bi-functor $\otimes : \cC \times \cC \ra \cC$ is defined on objects $f,g\in\cC$ to be
\[ (f \otimes g) (c) \defeq \sum_{a,b} f(a)g(b)N_{ab}^c \]
On morphisms
\[ \sum_{a \in L} A(a) \otimes \sum_{b \in L} B(b) \defeq \sum_{c \in L} \left( \sum_{a,b} A(a) \otimes B(b) \otimes I_{N_{ab}^c} \right) \]
where $A(a) \in \Mat_{f'(a) \times f(a)}$ and $B(b) \in \Mat_{g'(b) \times g(b)}$.

By the constraints in (\ref{eqn:unit}),
\[ (\d_\mathbf{1} \otimes f) (c) = \sum_{a,b} \d_{\mathbf{1}} (a) f(b) N_{ab}^c = f (b) N_{\mathbf{1} b}^c = f(b) \d_{bc} = f(c) \]
\[ (f \otimes \d_\mathbf{1}) (c) = \sum_{a,b}  f(a) \d_{\mathbf{1}} (b) N_{ab}^c = f (a) N_{a \mathbf{1}}^c = f(a) \d_{ac} = f(c) \]
The neutral object is therefore $\mathbf{1} \equiv \d_{\mathbf{1}}$, and we take $\l_f : \mathbf{1} \otimes f \ra f$ and $\r_f : f \otimes \mathbf{1} \ra f$ both to be the identity, making $\cC$ strictly unital.

We apply Lemma \ref{lem:nonsense} to the case $F=\otimes \circ (1_{\cC} \times \otimes)$ and $G=\otimes \circ (\otimes \times 1_{\cC})$.  For any triple of simple objects $a,b,c \in \cC$, we define an isomorphism
$\a_{a,b,c} : a \otimes (b \otimes c) \ra (a \otimes b) \otimes c$ via
\[ \a_{a,b,c} = \sum_{u \in L} F_{abc}^u \in \bigoplus_{u \in L} \Mat_{N_{abc}^u \times N_{abc}^u}(k) \]
The diagram in Lemma \ref{lem:nonsense} commutes since both $F(f_1,\ldots,f_n)$ and $G(f_1,\ldots,f_n)$ amount to multiplication by a scalar.  Therefore $\a$ extends uniquely to an additive natural isomorphism $\a : F \ra G$.  

By construction, $\cC$ comes with a canonical choice of gauge, i.e. a canonical choice of bases for $(2,1)$-Hom-spaces, otherwise known as {\em fusion state spaces}. For any triple $a,b,c \in L$
\[ \cC(a \otimes b,c) \defeq \sum_{u \in L} \Mat_{c(u) \times (a \otimes b)(u)} (k) = \Mat_{1 \times (a \otimes b)(c)} (k) = \Mat_{1 \times N_{ab}^c} \equiv k^{N_{ab}^c} \]
Denote this canonical basis by $\{ e_{ab}^{c,i} \}_{i=1}^{N_{ab}^c}$.  Tautologically, $F_{abc}^u$ is the matrix representing the linear transformation $\a_{a,b,c}^* : \cC(a \otimes (b \otimes c),u) \ra \cC((a \otimes b) \otimes c,u)$  in the corresponding bases $ \{e_{ec}^{u,j} \circ (e_{ab}^{e,i} \otimes 1_c)\} \subset \cC((a \otimes b) \otimes c,u) $ and $ \{e_{ae'}^{u,i'} \circ (1_a \otimes e_{bc}^{e',j'})\} \subset \cC(a \otimes (b \otimes c),u)$. 

If we now play the proof to Lemma \ref{lem:fusion_extract} backwards, starting at equation 
\eqref{eq:penta_eqn} on page \pageref{eq:penta_eqn}, and retracing our steps all the way back to equation \eqref{eqn:pent} on page \pageref{eqn:pent}, while substituting $e_{ab}^{c,i}$ for $\eta_{ab}^{c,i}$, we end up establishing the commutativity of the pentagonal diagram for $\otimes$ defined in $\cC\equiv\cC(L,N,F)$.

The triangular diagram (\ref{eqn:tri_diag}) is satisfied for each pair of simple objects $a,b \in L$.  Since we chose $\l_b = I_{1 \times 1} = \r_a$, proving the commutativity of the triangular diagram reduces to showing $\a_{a,\mathbf{1},b} = I$, that is, $\a_{a,\mathbf{1},b}(u) = I_{((a\mathbf{1})b)(u) \times (a(\mathbf{1}b))(u)}$.  However, $((a \otimes \mathbf{1}) \otimes b)(u) = N_{ab}^u = (a \otimes (\mathbf{1} \otimes b))(u)$, by definition $\a_{a,\mathbf{1},b}(u) = F_{a \mathbf{1} b}^u$, and by constraint (\ref{eqn:triangle}): $F_{a \mathbf{1} b}^u = I_{N_{ab}^u \times N_{ab}^u}$.

\noindent \textbf{Duals:} Given an object $f \in Ob(\cC)$, define its right dual $f^*$ to be the function $f^* : L \ra \NN \cup \{0\}$ such that $(f^*)(a) = f(a^*)$, in particular, $(\d_a)^*=\d_{a^*}$.   For simple objects $a \equiv \d_a$, the evaluation and co-evaluation are morphisms:
\[ c_a \in \cC(\mathbf{1},a^* \otimes a) = \bigoplus_{u \in L} \Mat_{(a^* a)(u) \times \mathbf{1}(u)} (k) = \Mat_{N_{a^* a}^{\mathbf{1}} \times 1} (k) \equiv k\]
\[ e_a \in \cC(a \otimes a^*, \mathbf{1}) = \bigoplus_{u \in L} \Mat_{\mathbf{1}(u) \times (a a^*)(u)} (k) = \Mat_{1 \times N_{a a^*}^{\mathbf{1}}} (k) \equiv k\]
in other words, these are scalars which we pick to be:
\begin{equation} c_a = 1/u_{a} \quad , \quad e_a = 1 \label{def:coev} \end{equation}
To check the first right duality axiom:
\begin{eqnarray*} (e_a \otimes 1_a) \circ \a_{a,a^*,a} \circ (1_a \otimes c_a) = && \\
&& \hspace{-2in} = \sum_{u \in L} \left( \sum_{(v,w)} e_a(v) \otimes 1_a(w) \otimes I_{N_{vw}^u} \right)
F_{aa^*a}^u
\left( \sum_{(v',w')} 1_a(v') \otimes c_a(w') \otimes I_{N_{v'w'}^u} \right) \end{eqnarray*}
since $1_a(w) = \d_{aw}$, $1_a(v') = \d_{av'}$
\begin{eqnarray*} 
&& = \sum_{u \in L} \left( \sum_{v} e_a(v) \otimes I_{N_{va}^u} \right)
F_{aa^*a}^u \left[ \scalebox{.6}{$\begin{array}{ccc} - & v & - \\ - & w' & - \end{array}$} \right] 
\left( \sum_{w'} c_a(w') \otimes I_{N_{aw'}^u} \right) \end{eqnarray*}
since $e_a(v) = \d_{v \mathbf{1}}$, $c_a(w') = 1/u_a \d_{aw'}$
\begin{eqnarray*} 
&& = \sum_{u \in L} \left( I_{N_{\mathbf{1}a}^u} \right)
F_{aa^*a}^u \left[ \scalebox{.6}{$\begin{array}{ccc} - & \mathbf{1} & - \\ - & \mathbf{1} & - \end{array}$} \right] 
\left( 1/u_a I_{N_{a\mathbf{1}}^u} \right) \end{eqnarray*}
since $N_{\mathbf{1}a}^u = \d_{au} = N_{a\mathbf{1}}^u$, $F_{aa^*a}^a \left[ \scalebox{.6}{$\begin{array}{ccc} - & \mathbf{1} & - \\ - & \mathbf{1} & - \end{array}$} \right]$ is in fact a $1 \times 1$ matrix and
\begin{eqnarray*} 
&& = 1 /u_a
F_{aa^*a}^u \left[ \scalebox{.6}{$\begin{array}{ccc} 1 & \mathbf{1} & 1 \\ 1 & \mathbf{1} & 1 \end{array}$} \right] = 1/u_a \cdot u_a = 1\end{eqnarray*}
thus proving that $(e_a \otimes 1_a) \circ \a_{a,a^*,a} \circ (1_a \otimes c_a) : a \ra a$ is the identity.
By Lemma \ref{lem:RA} this is all we need to establish the existence of right duals in $\cC$ and subsequently the rigidity of $\cC$.

We just constructed a skeletal, semi-simple, abelian category defined over $k$, with a monoidal structure, which is rigid, in other words, $\cC$ is a fusion category.  Denote this fusion category $\cC(L,N,F) \equiv \cC$.

\noindent \textbf{Equivalence:} Let $\cC$ be a fusion category defined over $k$, and let $(L,N,F)(\cC)$ be a fusion system extracted from it.  Recall the fusion system $(L,N,F)(\cC)$ depends on a choice of basis $\{\e_{ab}^{u,i}\}_{i=1}^{N_{ab}^u}$ for each morphism space $\cC(a \otimes b,u)$ (a choice restricted by $\e_{a\mathbf{1}}^{a,1} = \r_a$, $\e_{\mathbf{1}a}^{a,1} = \l_a$ and $\e_{aa^*}^{\mathbf{1},1} = e_a$).  To prove equivalence we define a monoidal functor $F = (F_0,F_1,F_2) : \cC(L,N,F)(\cC) \ra \cC$.  
\[ F_1 : f  \in Ob \, \cC(L,N,F)(\cC) \mapsto \bigoplus_{a \in L} a^{\oplus f(a)} \in Ob \, \cC \]
the isomorphism which holds in any $k$-linear abelian category (given our finiteness assumptions \S\ref{ss:lac})
\[ \bigoplus_{a \in L} \Mat_{g(a) \times f(a)}(k) \xra{\ \cong \ } 
{\cC} \left( \bigoplus_{a \in L} a^{\oplus f(a)},  
\bigoplus_{a \in L} a^{\oplus g(a)} \right) \]
defines $F_1$ on morphisms
\[ F_1 : \cC((L,N,F,R,\eps)(\cC))(f,g) \longmapsto \cC(F_1(f),F_1(g)) \] 
Choice of bases $\{ \e_{ab}^{u,i} \} \subset \cC(a \otimes b,u)$ induces a choice of isomorphisms
\begin{eqnarray*}
&& F_2(a,b) : F_1(a) \otimes F_1(b) \xra{\ \cong \ } F_1(a \otimes b) \defeq \bigoplus_{u \in L} u^{\oplus N_{ab}^u} \\
&& F_2(a,b) = \sum_{u \in L} \sum_{i=1}^{N_{ab}^u} \e_{ab}^{u,i} \end{eqnarray*}
which extends linearly for generic objects.  And we choose $F_0 : F_1(\mathbf{1}) \ra \mathbf{1}$ to be $F_0=1_\mathbf{1}$.  To conclude, we need to consider the commutativity of the following diagrams
\[
\xymatrix{
F_1(\mathbf{1}) \otimes F_1(a) \ar[r]^{F_0 \otimes 1} \ar[d]_{F_2(\mathbf{1},a)} &
\mathbf{1} \otimes F_1(a) \ar[d]^{\l_{F_1(a)}} \\
F_1(\mathbf{1}\otimes a) \ar[r]_{F_1(\l_a)} & F_1(a)
}
\xymatrix{
F_1(a)  \otimes F_1(\mathbf{1}) \ar[r]^{1 \otimes F_0} \ar[d]_{F_2(a,\mathbf{1})} &
F_1(a) \otimes \mathbf{1} \ar[d]^{\r_{F_1(a)}} \\
F_1(a \otimes \mathbf{1}) \ar[r]_{F_1(\r_a)} & F_1(a)
}
\]
\[
\xymatrix{
F_1(a) \otimes (F_1(b) \otimes F_1(c)) \ar[rr]^{\a_{F_1(a),F_1(b),F_1(c)}} 
\ar[d]_{1_{F_1(a)} \otimes F_2(b,c)} &&
(F_1(a) \otimes F_1(b)) \otimes F_1(c) 
\ar[d]^{F_2(a,b) \otimes 1_{F_1(c)}} \\
F_1(a) \otimes F_1(b \otimes c) \ar[d]_{F_2(a,b \otimes c)} && 
F_1(a \otimes b) \otimes F_1(c) \ar[d]^{F_2(a \otimes b, c)} \\
F_1(a \otimes (b \otimes c)) \ar[rr]_{F_1(\a_{a,b,c})} && F_1((a \otimes b) \otimes c)
}
\]
By definition, diagrams equal
\[
\xymatrix{
\mathbf{1} \otimes a \ar[r]^{1 \otimes 1} \ar[d]_{F_2(\mathbf{1},a)} &
\mathbf{1} \otimes a \ar[d]^{\l_a} \\
a \ar[r]_{F_1(1) = 1} & a
} \quad \quad
\xymatrix{
a  \otimes \mathbf{1} \ar[r]^{1 \otimes 1} \ar[d]_{F_2(a,\mathbf{1})} &
a \otimes \mathbf{1} \ar[d]^{\r_a} \\
a \ar[r]_{F_1(1) = 1} & a
}
\]
\[
\xymatrix{
a \otimes (b \otimes c) \ar[rr]^{\a_{a,b,c}} 
\ar[d]_{\sum_{u,i,j} \e_{aw}^{u,i} \circ (1_a \otimes \e_{bc}^{w,j})} &&
(a \otimes b) \otimes c 
\ar[d]^{\sum_{u,i',j'}  \e_{w'c}^{u,j'} \circ (\e_{ab}^{w',i'} \otimes 1_c)}  \\
\sum_{u \in L} u^{\oplus N_{abc}^u} \ar[rr]_{\sum_{u \in L} F_{abc}^u} && 
\sum_{u \in L} u^{\oplus N_{abc}^u}
}
\]
Top two diagrams commute by
\[ F_2(\mathbf{1},a) = \sum_{u \in L} \sum_i \e_{\mathbf{1} a}^{u,i} = \e_{\mathbf{1} a}^{a,1} = \l_a \]
\[ F_2(a,\mathbf{1}) = \sum_{u \in L} \sum_i \e_{a \mathbf{1}}^{u,i} = \e_{a \mathbf{1}}^{a,1} = \r_a \]
Bottom diagram commutes by definition of $F_{abc}^u$.  Hence $F$ is a monoidal functor which induces an isomorphism on skeletons.  Hence $\cC((L,N,F,R,\eps)(\cC)) \simeq \cC$ as fusion categories.
\end{proof} 

\begin{remark}
In his paper \cite{Y}, Yamagami introduces the notion of a monoidal system.  Our fusion system is a numerical version of that.  Yamagami goes on to prove a reconstruction theorem similar to Proposition \ref{prop:fusion}.  Both proofs follow similar lines.  
\end{remark}

\begin{remark}
In general, it is not true that every monoidal category is equivalent to a skeleton which is strictly unital.  We just proved that for fusion categories this is the case.
\end{remark}

\subsection{Choice of Gauge.}

\label{ss:gauge}

The extraction of a fusion system $(L,N,F)(\cC)$ from a fusion category $\cC$ depended on a choice of gauge, namely, a choice of bases $\{ \eta_{ab}^{c,i} \}$ for $(2,1)$-Hom-spaces $\cC(a \otimes b,c)$, otherwise known as {\em fusion state spaces}.  That choice was restricted so that
\begin{eqnarray*}
\eta_{\mathbf{1}a}^{a,1} & \defeq & \l_a \in \cC(\mathbf{1} \otimes a,a) \\
\eta_{a\mathbf{1}}^{a,1} & \defeq & \r_a \in \cC(a \otimes \mathbf{1},a) \\
\eta_{aa^*}^{\mathbf{1},1} & \defeq & e_a \in \cC(a \otimes a^*, \mathbf{1})
\end{eqnarray*}
Recall from \S\ref{ss:lac} we let $\{\e_{c,j}^{ab}\} \subset \cC(c,a \otimes b)$ be the dual basis to $\{ \e_{ab}^{c,i} \} \subset \cC(a \otimes b,c)$ with respect to the perfect pairing given by composition:
\[ \cC(c,a \otimes b) \times \cC(a \otimes b,c) \lra \cC(c,c) \equiv k \]

By construction, $\cC(L,N,F)$ comes with a canonical choice of gauge. For any triple $a,b,c \in L$
\[ \cC(L,N,F)(a \otimes b,c) := \sum_{u \in L} \Mat_{c(u) \times (a \otimes b)(u)} (k) = \Mat_{1 \times (a \otimes b)(c)} (k) = \Mat_{1 \times N_{ab}^c} \equiv k^{N_{ab}^c} \]
For future reference we denote this canonical choice by $\{ e_{ab}^{c,i} \} \subset \cC(L,N,F)(a \otimes b,c)$, to distinguish it from a non-canonical choice of gauge $\{ \eta_{ab}^{c,i} \} \subset \cC(a \otimes b,c)$ for a general fusion category $\cC$.  By construction,
\begin{eqnarray*}
\l_a & \defeq & e_{\mathbf{1}a}^{a,1}  \in \cC(L,N,F)(\mathbf{1} \otimes a,a) \\
 \r_a & \defeq & e_{a\mathbf{1}}^{a,1} \in \cC(L,N,F)(a \otimes \mathbf{1},a) \\
e_a & \defeq & e_{ab}^{\mathbf{1},1}  \in \cC(L,N,F)(a \otimes b, \mathbf{1}) \quad , \quad b=a^* \in L \\
u_a \cdot c_a & \defeq & e_{\mathbf{1},1}^{ba} \in \cC(L,N,F)(\mathbf{1},b \otimes a) \quad , \quad b=a^* \in L
\end{eqnarray*}

\subsection{Modular System.} 

\label{ss:modular}

\begin{definition}
\label{def:modular}
A \emph{modular system} $(L,N,F,R,\eps)$ over $k$ consists of the following:
\begin{list}{(\roman{listi})}{\usecounter{listi}\setlength{\leftmargin}{0ex}\setlength{\labelsep}{0ex}}
\item \ \ A fusion system $(L,N,F)$ over $k$ such that for every triple $a,b,c \in L$
\begin{equation} N_{ab}^c = N_{ba}^c \label{eq:comm} \end{equation}
\item \ \ A collection of invertible square matrices $R_{ab}^c$ of dimension $N_{ab}^c$ defined over $k$ with inverses $Q_{ab}^c$, associated with every triplet $a,b,c \in L$ subject to the constraints: for every $d,d' \in L$, $i=1,\ldots,N_{ca}^d$, $j=1,\ldots,N_{db}^u$, $i'=1,\ldots,N_{ad'}^u$ and $j'=1,\ldots,N_{bc}^{d'}$
\begin{eqnarray}
\sum_{x=1}^{N_{ac}^d} 
\sum_{y=1}^{N_{bc}^{d'}}
R_{ac}^d \left[ \scalebox{.6}{ $\begin{array}{c} i \\ x \end{array}$ } \right]
F_{acb}^u \left[ \scalebox{.6}{$\begin{array}{ccc} x & d & j \\ i' & d' & y \end{array}$} \right] 
R_{bc}^{d'} \left[ \scalebox{.6}{ $\begin{array}{c} y \\ j' \end{array}$ } \right]
& = & \label{eq:hex16} \\  
& \hspace{-7.8cm} = & \hspace{-4cm}
\sum_{e \in  L} 
\sum_{u=1}^{N_{ce}^u} 
\sum_{v=1}^{N_{ab}^e} 
\sum_{w=1}^{N_{ec}^u}
F_{cab}^u \left[ \scalebox{.6}{$\begin{array}{ccc} i & d & j \\ u & e & v \end{array}$} \right]
R_{ec}^u \left[ \scalebox{.6}{ $\begin{array}{c} u \\ w \end{array}$ } \right]
F_{abc}^u \left[ \scalebox{.6}{$\begin{array}{ccc} v & e & w \\ i' & d' & j' \end{array}$} \right]
\nonumber
\label{eq:hexa}
\end{eqnarray}
\begin{eqnarray}
\sum_{x=1}^{N_{ac}^d} 
\sum_{y=1}^{N_{bc}^{d'}}
Q_{ac}^d \left[ \scalebox{.6}{ $\begin{array}{c} i \\ x \end{array}$ } \right]
F_{acb}^u \left[ \scalebox{.6}{$\begin{array}{ccc} x & d & j \\ i' & d' & y \end{array}$} \right] 
Q_{bc}^{d'} \left[ \scalebox{.6}{ $\begin{array}{c} y \\ j' \end{array}$ } \right]
& = & \label{eq:hex16} \\  
& \hspace{-7.8cm} = & \hspace{-4cm}
\sum_{e \in  L} 
\sum_{u=1}^{N_{ce}^u} 
\sum_{v=1}^{N_{ab}^e} 
\sum_{w=1}^{N_{ec}^u}
F_{cab}^u \left[ \scalebox{.6}{$\begin{array}{ccc} i & d & j \\ u & e & v \end{array}$} \right]
Q_{ec}^u \left[ \scalebox{.6}{ $\begin{array}{c} u \\ w \end{array}$ } \right]
F_{abc}^u \left[ \scalebox{.6}{$\begin{array}{ccc} v & e & w \\ i' & d' & j' \end{array}$} \right]
\nonumber
\label{eq:hexb}
\end{eqnarray}
\item \ \ A collection $\eps_a = \pm 1$ for every $a \in L$, subject to the constraint
\begin{eqnarray}
&& \label{eq:piv} \hspace{-1cm} 
\sum_{s =1}^{N_{bc^*}^{a^*}} \sum_{t = 1}^{N_{c^*a}^{b^*}} 
F_{abc^*}^{\mathbf{1}} \left[ \scalebox{.6}{$ \begin{array}{ccc} i & c & 1 \\ 1 & a^* & s \end{array} $} \right] 
F_{bc^*a}^{\mathbf{1}} \left[ \scalebox{.6}{$ \begin{array}{ccc} s & a^* & 1 \\ 1 & b^* & t \end{array} $} \right] 
F_{c^*ab}^{\mathbf{1}} \left[ \scalebox{.6}{$ \begin{array}{ccc} t & b^* & 1 \\ 1 & c & i \end{array} $} \right] 
= \eps_c^{-1} \eps_a \eps_b 
\end{eqnarray}
for every $i \in \{1,\ldots,N_{ab}^c\}$.
\item  \ \ The matrix $\hat{S}$ of dimension $|L| \times |L|$ defined over $k$, whose entries are given by 
\begin{equation} 
\hat{S}_{ab} = \sum_{c \in L} 
\sum_{i=1}^{N_{ab^*}^c} 
\sum_{j=1}^{N_{cb}^{a}}
\sum_{i'=1}^{N_{b^*a}^c}
\sum_{i''=1}^{N_{ab^*}^c}
G_{ab^*b}^{a} \left[ \scalebox{.6}{$ \begin{array}{ccc} 1 & \mathbf{1} & 1 \\ i & c & j \end{array} $} \right]
R_{b^*a}^c \left[ \scalebox{.6}{$ \begin{array}{c} i \\ i' \end{array} $} \right]
R_{ab^*}^c \left[ \scalebox{.6}{$ \begin{array}{c} i' \\ i'' \end{array} $} \right]
F_{ab^*b}^{a} \left[ \scalebox{.6}{$ \begin{array}{ccc} i'' & c & j \\ 1 & \mathbf{1} & 1 \end{array} $} \right] \label{eq:modularity}
\end{equation}
is invertible.
\end{list}
\end{definition}

\begin{remark} As a consequence of the Hexagon equations, we have
\[ R_{a \mathbf{1}}^a \left[ \scalebox{.6}{$ \begin{array}{c} 1 \\ 1 \end{array} $} \right] = 1 = 
R_{\mathbf{1} a}^a \left[ \scalebox{.6}{$ \begin{array}{c} 1 \\ 1 \end{array} $} \right] 
\quad , \quad
Q_{a \mathbf{1}}^a \left[ \scalebox{.6}{$ \begin{array}{c} 1 \\ 1 \end{array} $} \right] = 1 = 
Q_{\mathbf{1} a}^a \left[ \scalebox{.6}{$ \begin{array}{c} 1 \\ 1 \end{array} $} \right] \]
\label{rem:R_trivial}
\end{remark}

\begin{prop}
\label{prop:modular}
\begin{mylist}
\item Given a modular category $\cC$ defined over $k$, one can extract from it a modular system $(L,N,F,R,\eps)(\cC)$ defined over $k$.
\item Given a modular system $(L,N,F,R,\eps)$ defined over $k$, one can construct from it a modular category $\cC(L,N,F,R,\eps)$ defined over $k' = k(\sqrt{u_a})$.  
\item Moreover, given a modular category $\cC$ defined over $k$, we have $\cC((L,N,F,R,\eps)(\cC)) \simeq \cC \otimes_k k'$.
\end{mylist}
\end{prop}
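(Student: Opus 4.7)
The plan is to parallel the fusion case (Lemma 2.14 and Proposition 3.2), augmenting each direction by the extra data: a braiding via matrices $R_{ab}^c$, pivotal coefficients $\eps_a$, and the modularity matrix $\hat S$. Throughout, a fixed gauge $\{\eta_{ab}^{c,i}\}$ on $(2,1)$-Hom-spaces underlies the extraction, while the reconstruction uses the canonical gauge $\{e_{ab}^{c,i}\}$ built into $\cC(L,N,F)$.

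For part (i), extraction: the braiding isomorphism $\b_{a,b}:a\otimes b\to b\otimes a$ induces $\cC(a\otimes b,c)\cong\cC(b\otimes a,c)$, giving $N_{ab}^c=N_{ba}^c$, and I define $R_{ab}^c$ as the matrix of the pullback $\b_{a,b}^{*}$ in the chosen bases. Expanding each side of the two Hexagon diagrams \eqref{eq:hex1}--\eqref{eq:hex2} along the associators using the defining relation of the $F$-matrix (equation (3.3)) and matching coefficients produces exactly the Hexagon equations in Definition 3.4, in precise analogy with the pentagon derivation of Lemma 2.14. Since $\cC$ is ribbon hence spherical, the pivotal isomorphism $\eps_a\in\cC(a^{**},a)\cong k$ satisfies $\eps_{a^*}=(\eps_a^{-1})^{*}$ by Lemma 2.6, and sphericality normalizes it so that $\eps_a\in\{\pm1\}$; constraint (3.8) then arises by applying the pivotal/cyclic symmetry to the basis morphism $\eta_{ab}^{c,i}$ viewed in $\cC(a\otimes b\otimes c^{*},\mathbf{1})$, where the three $F$-matrix entries implement the three cyclic rotations and the product of three $\eps$'s records the three uses of pivotality. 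Finally, $\hat S$ in (3.9) is the explicit gauge expansion of the categorical Hopf-link invariant, so modularity of $\cC$ (invertibility of its $S$-matrix) is invertibility of $\hat S$.

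For part (ii), reconstruction: starting from $\cC(L,N,F)$ over $k$ built in Proposition 3.2, I set $\b_{a,b}:=\sum_{c}R_{ab}^{c}\in\bigoplus_{c}\Mat_{N_{ba}^{c}\times N_{ab}^{c}}(k)$ on simples and invoke Lemma 2.3 (applied to $\otimes$ and $\otimes\circ\s$, each additive in every slot) to extend uniquely to a natural isomorphism; the Hexagon axioms on simples follow from (3.6)--(3.7) and extend by additivity. To produce a pivotal $\eps_a:a^{**}\to a$ normalized to $\pm 1$ one must rescale the tautological skeletal identification $a^{**}\equiv a$ by a square root of the duality scalar $u_a=F_{aa^{*}a}^{a}[\ldots]$ appearing in the coevaluation (3.5); hence the reconstruction lives naturally over $k':=k(\sqrt{u_a}\,)$, meaning $k$ with all such square roots adjoined. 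Constraint (3.8) is precisely the monoidal-naturality condition (2.20) for $\eps$; sphericality ($\eps_a=\pm 1$) with the braiding yields a ribbon structure (Section 2.6), and invertibility of $\hat S$ yields modularity. For part (iii), the monoidal equivalence $F$ of Proposition 3.2 sends $e_{ab}^{u,i}\mapsto\eta_{ab}^{u,i}$, and since $R$ and $\eps$ are defined by the same matrices and signs in these paired bases, $F\otimes_k k'$ automatically intertwines braiding and pivotal structure, giving a ribbon equivalence $\cC((L,N,F,R,\eps)(\cC))\simeq\cC\otimes_k k'$.

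The principal obstacle is pinning down the correct field extension: normalizing $\eps_a$ to $\pm 1$ in the skeletal reconstruction introduces square roots, and one must verify that only the scalars $u_a$ (and not further quantities hidden in the $F$-matrix entries) need to be adjoined. Equivalently, one must check that for every consistent choice of signs $\eps_a\in\{\pm 1\}$ compatible with (3.8), the resulting pivotal structure trivializes a coherent obstruction captured entirely by the $u_a$. The remaining technicalities --- confirming that (3.9) expands the Hopf-link invariant and that (3.6)--(3.7) encode the Hexagons in the chosen gauge --- are straightforward, if lengthy, diagrammatic calculations parallel to the pentagon bookkeeping of Lemma 2.14.
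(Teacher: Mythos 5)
Your proposal gets the high-level architecture right (braiding from $R$, hexagons via pullback expansion, $\hat S$ as the Hopf-link invariant, $\sqrt{u_a}$ for the field extension) but has a genuine gap in part (i): you misidentify the pivotal coefficient $\eps_a$. You treat $\eps_a$ as the scalar value of the pivotal isomorphism $\eps_a'\in\cC(a^{**},a)\cong k$ itself, and claim ``sphericality normalizes it so that $\eps_a\in\{\pm 1\}$.'' This is false---sphericality says $q_l(a)=q_r(a)$, which places no direct constraint of the form $\eps'_a=\pm 1$ on the pivotal scalar. The $\eps_a$ recorded in the modular system is not the pivotal structure of $\cC$ but the \emph{discrepancy} $\eps_a = \eps_a''\circ(\eps_a')^{-1}$ between the given pivotal structure $\eps'$ and the canonical one $\eps''=\theta^{-1}\circ\psi$ built from the braiding and balancing. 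Both $\eps'$ and $\eps''$ are spherical, and the ratio of their quantum dimensions on a simple object gives $q'_l(a)=\eps_a^{-1}q''_l(a)$ and $q'_r(a)=\eps_a q''_r(a)$, whence $\eps_a=\pm 1$. Without this two-pivotal-structure comparison, the argument for $\eps_a=\pm 1$ fails, and the derivation of constraint~(\ref{eq:piv}) --- which the paper obtains from monoidality of $\eps''=\eps\circ\eps'$, pushed through $\Delta_2$ and the right duality axioms --- is left hanging. Your description of (\ref{eq:piv}) as a ``cyclic symmetry'' is too vague to plug this hole.

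A smaller but real inversion occurs in part (ii). You write that one rescales the identification $a^{**}\equiv a$ by $\sqrt{u_a}$ ``to produce a pivotal $\eps_a$ normalized to $\pm 1$.'' In fact $\eps_a=\pm 1$ is \emph{input} data of the modular system; the pivotal structure on the skeletal $\cC(L,N,F)$ is defined as $\eps(a)=\eps_a 1_a$ with no rescaling. What the $\sqrt{u_a}$ buys is a renormalization of the evaluation and coevaluation (so that $e_a, c_a$ become $\lambda_a^{-1}$ rather than $1$ and $1/u_a$), after which $q_l(a)=\eps_a/(\lambda_a\lambda_{a^*})$ and $q_r(a)=\eps_a^{-1}/(\lambda_{a^*}\lambda_a)$; sphericality then holds precisely because $\eps_a=\eps_a^{-1}$. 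Your ``principal obstacle'' (whether more than $\sqrt{u_a}$ must be adjoined) is thereby resolved directly by the computation; there is no coherence obstruction to check beyond the explicit formula for the quantum dimensions. Part (iii) is fine and matches the paper.
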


\begin{remark}
The extension $k' = k(\sqrt{u_a})$ facilitates our construction of the modular category $\cC(L,N,F,R,\eps)$.  At this point, we do not know of a proof that such an extension is necessary, only sufficient.  
\end{remark}

\begin{proof}
\begin{mylist}
\item Let $\cC$ be a modular category defined over $k$.  Let $L$ be a set of representatives of isomorphism classes of simple objects in $\cC$.  For every triple $a,b,c \in L$, choose a basis $\{\eta_{ab}^{c,i}\}_{i=1}^{N_{ab}^c}$ for the morphism $k$-space $\cC(a \otimes b,c)$.

Such a choice is restricted so that
\begin{eqnarray*}
\eta_{\mathbf{1}a}^{a,1} & \defeq & \l_a \in \cC(\mathbf{1} \otimes a,a) \\
\eta_{a\mathbf{1}}^{a,1} & \defeq & \r_a \in \cC(a \otimes \mathbf{1},a) \\
\eta_{aa^*}^{\mathbf{1},1} & \defeq & e_a \in \cC(a \otimes a^*, \mathbf{1}) \\
\eta_{a^*a}^{\mathbf{1},1} & \defeq & e_{a^*} \circ (1_{a^*} \otimes \eps_a'^{-1}) \in \cC(a^* \otimes a, \mathbf{1})
\end{eqnarray*}
where $\eps'$ designates a given pivotal structure in $\cC$.

Recall from Lemma \ref{lem:fusion_extract} that such a choice of bases facilitates the extraction of a fusion system $(L,N,F)(\cC)$ from $\cC$.  We show $(L,N,F)(\cC)$ can be completed to a modular system defined over $k$.  

First, note that since $\cC$ is braided, $a \otimes b \cong b \otimes a$, which implies $N_{ab}^c = \dim_k \cC(a \otimes b,c) = \dim_k \cC(b \otimes a,c) = N_{ba}^c$, thus satisfying constraint \eqref{eq:comm} in Definition \ref{def:modular}.

The braiding and its inverse in $\cC$
\[ \b_{a,b} : a \otimes b \ra b \otimes a \] 
\[ \b_{b,a}^{-1} : a \otimes b \ra b \otimes a \] 
define pullbacks
\begin{eqnarray*}
&& \b^*_{a,b} : \cC(b \otimes a, c) \ra \cC(a \otimes b, c) \\
&& (\b_{b,a}^{-1})^* : \cC(b \otimes a, c) \ra \cC(a \otimes b, c)
\end{eqnarray*}
for every $c\in L$.  We define $R_{ab}^c$ to be the matrix representing $\b^*_{a,b}$, and $Q_{ab}^c$ to be the matrix representing $(\b_{b,a}^{-1})^*$, in the above respective bases, namely,
\begin{eqnarray}
&& \eta_{ba}^{c,i} \circ \b_{a,b} = \sum_{i'=1}^{N_{ab}^c} R_{ab}^c
\left[ \scalebox{.6}{$\begin{array}{c} i \\ i' \end{array}$} \right] \eta_{ab}^{c,i'} \label{eq:R} \\
&& \eta_{ba}^{c,i} \circ \b_{b,a}^{-1} = \sum_{i'=1}^{N_{ab}^c} Q_{ab}^c \left[ \scalebox{.6}{ $\begin{array}{c} i \\ i' \end{array}$ } \right] \eta_{ab}^{c,i'} \label{eq:Q}
\end{eqnarray}
The coefficients in (\ref{eq:R}) are known as the {\em $R$-matrix coefficients}.  

Showing the collection $\{R_{ab}^c\}$ satisfies constraint (\ref{eq:hexa}), and the collection $\{Q_{ab}^c\}$ satisfies constraint (\ref{eq:hexb}) in Definition \ref{def:modular} follows the exact same argument in reverse as the one used in part (ii) of the proof.  We therefore refer the reader to equations \eqref{eq:hex11} -- \eqref{eq:hex16} on pages \pageref{eq:hex11} -- \pageref{eq:hex16}.

As a ribbon fusion category, $\cC$ has a spherical pivotal structure $\eps' : \Delta = (\Delta_0,\Delta_1,\Delta_2) \ra 1_\cC$, a braiding $\b$, and a balancing $\t$.  For every object $x \in \cC$, define an isomorphism $\psi_x : x^{**} \ra x$ as in \eqref{def:psi}.  Define $\eps_x'' : x \ra x$ to be the isomorphism
\[ \eps_x'' \defeq \t_x^{-1} \circ \psi_x \]
Now define $\eps_x : x \ra x$ to be 
\[ \eps_x \defeq \eps_x'' \circ (\eps_x')^{-1} \]
Consider $\eps_a$ for every representative $a \in L$ of an isomorphism class of simple objects in $\cC$.  By abuse of notation, think of $\eps_a \in \cC(a,a) \equiv k$ also as a scalar in $k$.

As discussed in \S\ref{ss:balance}, $\eps_x''$ constitutes a spherical pivotal structure on $\cC$. This means $\eps_x$, as defined above, measures the possible discrepancy between $\eps'$, given a-priori, and $\eps''$ defined via the braiding and balancing.  It turns out this discrepancy amounts to at most a multiplication by $ \pm 1$.

Let us show the collection $\{\eps_a\}_{a \in L}$ satisfies condition (\ref{eq:piv}).  We know $\eps_x'' = \eps_x \circ \eps_x'$ is monoidal, that is,
\[ \eps_{a \otimes b}'' \circ \Delta_2(a,b) \circ (\eps_a'')^{-1} \otimes (\eps_b'')^{-1} = 1_{a \otimes b} \]
For every $\eta_{ab}^{c,i} \in \cC(a \otimes b,c)$
\[ \eta_{ab}^{c,i} \circ \eps_{a \otimes b}'' \circ \Delta_2(a,b) \circ (\eps_a'')^{-1} \otimes (\eps_b'')^{-1} = \eta_{ab}^{c,i} \]
Because $\eps''$ is natural
\[\eps_{c}''  \circ \Delta_1(\eta_{ab}^{c,i}) \circ \Delta_2(a,b) \circ (\eps_a'')^{-1} \otimes (\eps_b'')^{-1} = \eta_{ab}^{c,i} \]
Using Lemma \ref{lem:g_in_Delta} we have
\[\eps_{c}''  \circ \Delta(a,b,c,\eta_{ab}^{c,i}) \circ (\eps_a'')^{-1} \otimes (\eps_b'')^{-1} = \eta_{ab}^{c,i} \]
Using definition of $\eps$
\begin{equation}
\eps_{c} \circ \eps_c' \circ \Delta(a,b,c,\eta_{ab}^{c,i}) \circ (\eps_a')^{-1} \otimes (\eps_b')^{-1} \circ  (\eps_a)^{-1} \otimes (\eps_b)^{-1} = \eta_{ab}^{c,i}
\label{eqn:pivotal_generic}
\end{equation}
Let's compute $\eps_c' \circ \Delta(a,b,c,\eta_{ab}^{c,i}) \circ (\eps_a')^{-1} \otimes (\eps_b')^{-1}$.  Note that, by our conventions, 
\[ \eta_{a^*a}^{\mathbf{1},1} = e_{a^*} \circ (1_{a^*} \otimes \eps_a'^{-1}). \]
and in graphical notation,

\begin{center}
\scalebox{.6}{
 $} \right] 
= \eps_c^{-1} \eps_a \eps_b \]

Both $\eps'$ and $\eps''$ are spherical pivotal structures on $\cC$.  Therefore the left quantum dimension $q'_l(x) : \mathbf{1} \ra \mathbf{1} $ ($q''_l(x) : \mathbf{1} \ra \mathbf{1}$) and right quantum dimension $q'_r(x) : \mathbf{1} \ra \mathbf{1} $ ($q''_r(x) : \mathbf{1} \ra \mathbf{1}$) of any object $x \in \cC$ agree, namely, $q'_l(x) = q'_r(x)$ and $q''_l(x) = q''_r(x)$. By definition of $\eps_a$,
\[ q'_l(a) = \eps^{-1}_a q''_l(a) \]
\[ q'_r(a) = \eps_a q''_r(a) \]
This implies $\eps_a = \pm 1$.

Lastly, the invertibility of the $|L|\times|L|$ matrix $\hat{S}$ defined in \eqref{eq:modularity} is tied to the invertibility of the $S$-matrix of $\cC$ via the following relation established on page \pageref{eq:S_calc}:
\[ S = D \hat{S} D \]
Here $D$ denotes the diagonal matrix whose diagonal entries are given by the quantum dimensions of the simple objects of $\cC$.  And since quantum dimensions $\neq 0$, the invertibility of $\hat{S}$ follows.  

\item Let us denote
\[ \cC \defeq \cC(L,N,F) \]
where $\cC(L,N,F)$ is the fusion category constructed in Proposition \ref{prop:fusion} from the fusion system $(L,N,F) \subset (L,N,F,R,\eps)$. Our plan is first to endow $\cC$ with a braiding and pivotal structure.  To show the pivotal structure is spherical we consider a scalar extension $\cC'$ of $\cC$, which is shown to be spherical and as such also tortile.  This makes $\cC'$ into a ribbon category, which we then show to be modular.  It is then $\cC'$ which we define to be $\cC(L,N,F,R,\eps)$.

\noindent \textbf{Braiding:} Lemma \ref{lem:nonsense} applied to $F=\otimes : \cC \times \cC \ra \cC$ and $G=\otimes ^{\text{op}} : \cC \times \cC \ra \cC$ implies that a braiding in $\cC$ is uniquely determined by a set of isomorphisms
\[ \b_{a,b} : a \otimes b \lra b \otimes a \]
for every $a,b \in L$.  Define $\b_{a,b} \in \cC(a \otimes b, b \otimes a)$ to be
\[ \b_{a,b} = \sum_{c \in L} R_{ab}^c \in \bigoplus_{c \in L} \Mat_{b\otimes a(c) \times a \otimes b(c)}(k) =  \bigoplus_{c \in L} \Mat_{N_{ba}^c \times N_{ab}^c}(k) = \bigoplus_{c \in L} \Mat_{N_{ab}^c \times N_{ab}^c}(k)\]

\noindent Equation (\ref{eq:hex1}) is satisfied iff for every $u \in L$ and every basis element $e_{db}^{u,j} \circ e_{ca}^{d,i} \otimes 1_b \in \cC((c \otimes a) \otimes b,u)$ we have
\begin{eqnarray}
e_{db}^{u,j} \circ (e_{ca}^{d,i} \otimes 1_b) \circ (\b_{a,c} \otimes 1_b) \circ \a_{a,c,b} \circ (1_a \otimes \b_{b,c}) & = & \label{eq:hex11} \\
& \hspace{-6cm} = & \hspace{-3cm} e_{db}^{u,j} \circ (e_{ca}^{d,i} \otimes 1_b) \circ \a_{c,a,b} \circ \b_{a \otimes b,c} \circ \a_{a,b,c} \nonumber
\end{eqnarray}

\begin{center}
\scalebox{.6}{\begin{tikzpicture}
\makeatletter{}\draw (1.05,6.3) node [minimum height=0, minimum width=0.4cm] (braiding1u0) {$u$};
\draw (1.05,5.4) node [draw, rounded corners=3pt, fill=gray!10, minimum width=1.2388cm] (braiding1fdbuj0) {$j$};
\draw (0.5306,4.5) node [draw, rounded corners=3pt, fill=gray!10, minimum width=0.8cm] (braiding1fcadi0) {$i$};
\draw (1.5694,4.5) node [minimum height=0] (braiding1idb13) {$$};
\draw (0.5306,3.6) node [] (braiding1brac0) {} (0.8306,3.15) coordinate (braiding1brac0dom2)..controls (0.8306,3.6) and (0.2306,3.6)..(0.2306,4.05) coordinate (braiding1brac0codom1);
\draw[preaction={draw,line width=2mm,white}] (0.2306,3.15) coordinate (braiding1brac0dom1)..controls (0.2306,3.6) and (0.8306,3.6)..(0.8306,4.05) coordinate (braiding1brac0codom2);
\draw (1.5694,3.6) node [minimum height=0] (braiding1idb12) {$$};
\draw (0.9,2.7) node [draw, minimum width=1.5388cm] (braiding1Subscriptalphaacb0) {$\alpha_{a,c,b}$};
\draw (0.3,1.8) node [minimum height=0] (braiding1ida18) {$$};
\draw (1.2,1.8) node [] (braiding1brbc0) {} (1.5,1.35) coordinate (braiding1brbc0dom2)..controls (1.5,1.8) and (0.9,1.8)..(0.9,2.25) coordinate (braiding1brbc0codom1);
\draw[preaction={draw,line width=2mm,white}] (0.9,1.35) coordinate (braiding1brbc0dom1)..controls (0.9,1.8) and (1.5,1.8)..(1.5,2.25) coordinate (braiding1brbc0codom2);
\draw (0.3,0.45) node [minimum height=0, minimum width=0.4cm] (braiding1a11) {$a$};
\draw (0.9,0.45) node [minimum height=0, minimum width=0.4cm] (braiding1b9) {$b$};
\draw (1.5,0.45) node [minimum height=0, minimum width=0.4cm] (braiding1c9) {$c$};
\draw[solid] ([xshift=0.cm]braiding1b9.north) to (braiding1brbc0dom1);
\draw[solid] ([xshift=0.cm]braiding1c9.north) to (braiding1brbc0dom2);
\draw[solid] ([xshift=0.cm]braiding1a11.north) to ([xshift=-0.6cm]braiding1Subscriptalphaacb0.south);
\draw[solid] (braiding1brbc0codom1) to ([xshift=0.cm]braiding1Subscriptalphaacb0.south);
\draw[solid] (braiding1brbc0codom2) to ([xshift=0.6cm]braiding1Subscriptalphaacb0.south);
\draw[solid] ([xshift=-0.6694cm]braiding1Subscriptalphaacb0.north) to node [right,font=\footnotesize] {$a$} (braiding1brac0dom1);
\draw[solid] ([xshift=-0.0694cm]braiding1Subscriptalphaacb0.north) to node [right,font=\footnotesize] {$c$} (braiding1brac0dom2);
\draw[solid] (braiding1brac0codom1) to ([xshift=-0.3cm]braiding1fcadi0.south);
\draw[solid] (braiding1brac0codom2) to ([xshift=0.3cm]braiding1fcadi0.south);
\draw[solid] ([xshift=0.cm]braiding1fcadi0.north) to node [right,font=\footnotesize] {$d$} ([xshift=-0.5194cm]braiding1fdbuj0.south);
\draw[solid] ([xshift=0.6694cm]braiding1Subscriptalphaacb0.north) to node [right,font=\footnotesize] {$b$} ([xshift=0.5194cm]braiding1fdbuj0.south);
\draw[solid] ([xshift=0.cm]braiding1fdbuj0.north) to ([xshift=0.cm]braiding1u0.south);
 
\end{tikzpicture}}
\raisebox{2cm}{$=$}
\scalebox{.6}{\begin{tikzpicture}
\makeatletter{}\draw (1.35,8.1) node [minimum height=0, minimum width=0.4cm] (braiding2u1) {$u$};
\draw (1.35,7.2) node [draw, rounded corners=3pt, fill=gray!10, minimum width=1.1cm] (braiding2fdbuj1) {$j$};
\draw (0.9,6.3) node [draw, rounded corners=3pt, fill=gray!10, minimum width=0.8cm] (braiding2fcadi1) {$i$};
\draw (1.8,6.3) node [minimum height=0] (braiding2idb14) {$$};
\draw (1.2,5.4) node [draw, minimum width=1.4cm] (braiding2Subscriptalphacab0) {$\alpha_{c,a,b}$};
\draw (0.6,4.5) node [minimum height=0] (braiding2idc1) {$$};
\draw (1.5,4.5) node [draw, minimum width=0.8cm] (braiding2Powertensorab-10) {$1_{ab}$};
\draw (1.05,3.6) node [] (braiding2brCircleTimesabc0) {} (1.5,3.15) coordinate (braiding2brCircleTimesabc0dom2)..controls (1.5,3.6) and (0.6,3.6)..(0.6,4.05) coordinate (braiding2brCircleTimesabc0codom1);
\draw[preaction={draw,line width=2mm,white}] (0.6,3.15) coordinate (braiding2brCircleTimesabc0dom1)..controls (0.6,3.6) and (1.5,3.6)..(1.5,4.05) coordinate (braiding2brCircleTimesabc0codom2);
\draw (0.6,2.7) node [draw, minimum width=0.8cm] (braiding2tensorab0) {$1_{ab}$};
\draw (1.5,2.7) node [minimum height=0] (braiding2idc0) {$$};
\draw (0.9,1.8) node [draw, minimum width=1.4cm] (braiding2Subscriptalphaabc0) {$\alpha_{a,b,c}$};
\draw (0.3,0.45) node [minimum height=0, minimum width=0.4cm] (braiding2a12) {$a$};
\draw (0.9,0.45) node [minimum height=0, minimum width=0.4cm] (braiding2b10) {$b$};
\draw (1.5,0.45) node [minimum height=0, minimum width=0.4cm] (braiding2c10) {$c$};
\draw[solid] ([xshift=0.cm]braiding2a12.north) to ([xshift=-0.6cm]braiding2Subscriptalphaabc0.south);
\draw[solid] ([xshift=0.cm]braiding2b10.north) to ([xshift=0.cm]braiding2Subscriptalphaabc0.south);
\draw[solid] ([xshift=0.cm]braiding2c10.north) to ([xshift=0.6cm]braiding2Subscriptalphaabc0.south);
\draw[solid] ([xshift=-0.6cm]braiding2Subscriptalphaabc0.north) to node [right,font=\footnotesize] {$a$} ([xshift=-0.3cm]braiding2tensorab0.south);
\draw[solid] ([xshift=0.cm]braiding2Subscriptalphaabc0.north) to node [right,font=\footnotesize] {$b$} ([xshift=0.3cm]braiding2tensorab0.south);
\draw[solid] ([xshift=0.cm]braiding2tensorab0.north) to node [right,font=\footnotesize] {$ab$} (braiding2brCircleTimesabc0dom1);
\draw[solid] ([xshift=0.6cm]braiding2Subscriptalphaabc0.north) to node [right,font=\footnotesize] {$c$} (braiding2brCircleTimesabc0dom2);
\draw[solid] (braiding2brCircleTimesabc0codom2) to ([xshift=0.cm]braiding2Powertensorab-10.south);
\draw[solid] (braiding2brCircleTimesabc0codom1) to ([xshift=-0.6cm]braiding2Subscriptalphacab0.south);
\draw[solid] ([xshift=-0.3cm]braiding2Powertensorab-10.north) to node [right,font=\footnotesize] {$a$} ([xshift=0.cm]braiding2Subscriptalphacab0.south);
\draw[solid] ([xshift=0.3cm]braiding2Powertensorab-10.north) to node [right,font=\footnotesize] {$b$} ([xshift=0.6cm]braiding2Subscriptalphacab0.south);
\draw[solid] ([xshift=-0.6cm]braiding2Subscriptalphacab0.north) to node [right,font=\footnotesize] {$c$} ([xshift=-0.3cm]braiding2fcadi1.south);
\draw[solid] ([xshift=0.cm]braiding2Subscriptalphacab0.north) to node [right,font=\footnotesize] {$a$} ([xshift=0.3cm]braiding2fcadi1.south);
\draw[solid] ([xshift=0.cm]braiding2fcadi1.north) to node [right,font=\footnotesize] {$d$} ([xshift=-0.45cm]braiding2fdbuj1.south);
\draw[solid] ([xshift=0.6cm]braiding2Subscriptalphacab0.north) to node [right,font=\footnotesize] {$b$} ([xshift=0.45cm]braiding2fdbuj1.south);
\draw[solid] ([xshift=0.cm]braiding2fdbuj1.north) to ([xshift=0.cm]braiding2u1.south);
 
\end{tikzpicture}}
\end{center}

\noindent We can re-write (\ref{eq:hex11}) using the $R$-matrix coefficients (defined in line (\ref{eq:R})) and $F$-matrix coefficients (defined in line (\ref{eq:F}))
\begin{eqnarray}
\sum_{i'=1}^{N_{ac}^d} 
R_{ac}^d \left[ \scalebox{.6}{ $\begin{array}{c} i \\ i' \end{array}$ } \right] 
e_{db}^{u,j} 
\circ (e_{ac}^{d,i'} \otimes 1_b) 
\circ \a_{a,c,b} \circ (1_a \otimes \b_{b,c}) 
& = & \label{eq:hex12} \\
& \hspace{-11.8cm} = & \hspace{-6cm}
\sum_{e \in L} \sum_{i''=1}^{N_{ce}^u} \sum_{j'=1}^{N_{ab}^e}
F_{cab}^u \left[ \scalebox{.6}{$\begin{array}{ccc} i & d & j \\ i'' & e & j' \end{array}$} \right] 
e_{ce}^{u,i''} 
\circ (1_c \otimes e_{ab}^{e,j'})
\circ \b_{a \otimes b,c} 
\circ \a_{a,b,c} \nonumber
\end{eqnarray}

\[\raisebox{2cm}{$\sum_{i'=1}^{N_{ac}^d} 
R_{ac}^d \left[ \scalebox{.6}{$\begin{array}{c} i \\ i' \end{array}$} \right] $}
\scalebox{.6}{\begin{tikzpicture}
\makeatletter{}\draw (1.05,5.4) node [minimum height=0, minimum width=0.4cm] (braiding3u2) {$u$};
\draw (1.05,4.5) node [draw, rounded corners=3pt, fill=gray!10, minimum width=1.1cm] (braiding3fdbuj2) {$j$};
\draw (0.6,3.6) node [draw, rounded corners=3pt, fill=gray!10, minimum width=0.8cm] (braiding3facdDerivative1i0) {$i'$};
\draw (1.5,3.6) node [minimum height=0] (braiding3idb15) {$$};
\draw (0.9,2.7) node [draw, minimum width=1.4cm] (braiding3Subscriptalphaacb1) {$\alpha_{a,c,b}$};
\draw (0.3,1.8) node [minimum height=0] (braiding3ida19) {$$};
\draw (1.2,1.8) node [] (braiding3brbc1) {} (1.5,1.35) coordinate (braiding3brbc1dom2)..controls (1.5,1.8) and (0.9,1.8)..(0.9,2.25) coordinate (braiding3brbc1codom1);
\draw[preaction={draw,line width=2mm,white}] (0.9,1.35) coordinate (braiding3brbc1dom1)..controls (0.9,1.8) and (1.5,1.8)..(1.5,2.25) coordinate (braiding3brbc1codom2);
\draw (0.3,0.45) node [minimum height=0, minimum width=0.4cm] (braiding3a13) {$a$};
\draw (0.9,0.45) node [minimum height=0, minimum width=0.4cm] (braiding3b11) {$b$};
\draw (1.5,0.45) node [minimum height=0, minimum width=0.4cm] (braiding3c11) {$c$};
\draw[solid] ([xshift=0.cm]braiding3b11.north) to (braiding3brbc1dom1);
\draw[solid] ([xshift=0.cm]braiding3c11.north) to (braiding3brbc1dom2);
\draw[solid] ([xshift=0.cm]braiding3a13.north) to ([xshift=-0.6cm]braiding3Subscriptalphaacb1.south);
\draw[solid] (braiding3brbc1codom1) to ([xshift=0.cm]braiding3Subscriptalphaacb1.south);
\draw[solid] (braiding3brbc1codom2) to ([xshift=0.6cm]braiding3Subscriptalphaacb1.south);
\draw[solid] ([xshift=-0.6cm]braiding3Subscriptalphaacb1.north) to node [right,font=\footnotesize] {$a$} ([xshift=-0.3cm]braiding3facdDerivative1i0.south);
\draw[solid] ([xshift=0.cm]braiding3Subscriptalphaacb1.north) to node [right,font=\footnotesize] {$c$} ([xshift=0.3cm]braiding3facdDerivative1i0.south);
\draw[solid] ([xshift=0.cm]braiding3facdDerivative1i0.north) to node [right,font=\footnotesize] {$d$} ([xshift=-0.45cm]braiding3fdbuj2.south);
\draw[solid] ([xshift=0.6cm]braiding3Subscriptalphaacb1.north) to node [right,font=\footnotesize] {$b$} ([xshift=0.45cm]braiding3fdbuj2.south);
\draw[solid] ([xshift=0.cm]braiding3fdbuj2.north) to ([xshift=0.cm]braiding3u2.south);
 
\end{tikzpicture}}
\raisebox{2cm}{$= \sum_{e \in L} \sum_{i''=1}^{N_{ce}^u} \sum_{j'=1}^{N_{ab}^e}
F_{cab}^u \left[ \scalebox{.6}{$\begin{array}{ccc} i & d & j \\ i'' & e & j' \end{array}$} \right] 
$}
\scalebox{.6}{\begin{tikzpicture}
\makeatletter{}\draw (1.05,7.2) node [minimum height=0, minimum width=0.4cm] (braiding4u3) {$u$};
\draw (1.05,6.3) node [draw, rounded corners=3pt, fill=gray!10, minimum width=1.1cm] (braiding4fceuDerivative2i0) {$i''$};
\draw (0.6,5.4) node [minimum height=0] (braiding4idc4) {$$};
\draw (1.5,5.4) node [draw, rounded corners=3pt, fill=gray!10, minimum width=0.8cm] (braiding4fabeDerivative1j0) {$j'$};
\draw (0.6,4.5) node [minimum height=0] (braiding4idc3) {$$};
\draw (1.5,4.5) node [draw, minimum width=0.8cm] (braiding4Powertensorab-11) {$1_{ab}$};
\draw (1.05,3.6) node [] (braiding4brCircleTimesabc1) {} (1.5,3.15) coordinate (braiding4brCircleTimesabc1dom2)..controls (1.5,3.6) and (0.6,3.6)..(0.6,4.05) coordinate (braiding4brCircleTimesabc1codom1);
\draw[preaction={draw,line width=2mm,white}] (0.6,3.15) coordinate (braiding4brCircleTimesabc1dom1)..controls (0.6,3.6) and (1.5,3.6)..(1.5,4.05) coordinate (braiding4brCircleTimesabc1codom2);
\draw (0.6,2.7) node [draw, minimum width=0.8cm] (braiding4tensorab1) {$1_{ab}$};
\draw (1.5,2.7) node [minimum height=0] (braiding4idc2) {$$};
\draw (0.9,1.8) node [draw, minimum width=1.4cm] (braiding4Subscriptalphaabc1) {$\alpha_{a,b,c}$};
\draw (0.3,0.45) node [minimum height=0, minimum width=0.4cm] (braiding4a14) {$a$};
\draw (0.9,0.45) node [minimum height=0, minimum width=0.4cm] (braiding4b12) {$b$};
\draw (1.5,0.45) node [minimum height=0, minimum width=0.4cm] (braiding4c12) {$c$};
\draw[solid] ([xshift=0.cm]braiding4a14.north) to ([xshift=-0.6cm]braiding4Subscriptalphaabc1.south);
\draw[solid] ([xshift=0.cm]braiding4b12.north) to ([xshift=0.cm]braiding4Subscriptalphaabc1.south);
\draw[solid] ([xshift=0.cm]braiding4c12.north) to ([xshift=0.6cm]braiding4Subscriptalphaabc1.south);
\draw[solid] ([xshift=-0.6cm]braiding4Subscriptalphaabc1.north) to node [right,font=\footnotesize] {$a$} ([xshift=-0.3cm]braiding4tensorab1.south);
\draw[solid] ([xshift=0.cm]braiding4Subscriptalphaabc1.north) to node [right,font=\footnotesize] {$b$} ([xshift=0.3cm]braiding4tensorab1.south);
\draw[solid] ([xshift=0.cm]braiding4tensorab1.north) to node [right,font=\footnotesize] {$ab$} (braiding4brCircleTimesabc1dom1);
\draw[solid] ([xshift=0.6cm]braiding4Subscriptalphaabc1.north) to node [right,font=\footnotesize] {$c$} (braiding4brCircleTimesabc1dom2);
\draw[solid] (braiding4brCircleTimesabc1codom2) to ([xshift=0.cm]braiding4Powertensorab-11.south);
\draw[solid] ([xshift=-0.3cm]braiding4Powertensorab-11.north) to node [right,font=\footnotesize] {$a$} ([xshift=-0.3cm]braiding4fabeDerivative1j0.south);
\draw[solid] ([xshift=0.3cm]braiding4Powertensorab-11.north) to node [right,font=\footnotesize] {$b$} ([xshift=0.3cm]braiding4fabeDerivative1j0.south);
\draw[solid] (braiding4brCircleTimesabc1codom1) to ([xshift=-0.45cm]braiding4fceuDerivative2i0.south);
\draw[solid] ([xshift=0.cm]braiding4fabeDerivative1j0.north) to node [right,font=\footnotesize] {$e$} ([xshift=0.45cm]braiding4fceuDerivative2i0.south);
\draw[solid] ([xshift=0.cm]braiding4fceuDerivative2i0.north) to ([xshift=0.cm]braiding4u3.south);
 
\end{tikzpicture}}
\]

\noindent Recall for every generic object $x \in \cC$ we have (see \S\ref{sec:conventions}),
\[ \sum_{d \in L} \sum_i e_{d,i}^x \circ e_x^{d,i} = 1_x \quad , \quad e_x^{e,j'} \circ e_{d,i}^x = \d_{d,e} \d_{i,j'} 1_e \]
\[ \Rightarrow \sum_{d \in L} \sum_i (e_{d,i}^x \otimes 1_c) \circ (e_x^{d,i} \otimes 1_c) = 1_x \otimes 1_c \]
\[ \Rightarrow \sum_{d \in L} \sum_i \b_{x,c} \circ (e_{d,i}^x \otimes 1_c) \circ (e_x^{d,i} \otimes 1_c) = \b_{x,c} \]
Since $\b$ is natural
\begin{equation}
\sum_{d \in L} \sum_i
(1_c \otimes e_{d,i}^x) \circ \b_{d,c} \circ (e_x^{d,i} \otimes 1_c) = \b_{x,c}
\end{equation}
Applying to $x = a \otimes b$,
\begin{equation}
\label{eq:lin1} 
\sum_{d \in L} \sum_{i=1}^{N_{ab}^d}
(1_c \otimes e_{d,i}^{ab}) \circ \b_{d,c} \circ (e_{ab}^{d,i} \otimes 1_c) = \b_{a \otimes b,c}
\end{equation}
Post-composing both sides of (\ref{eq:lin1}) with $1_c \otimes e_{ab}^{e,j'}$ we get
\begin{eqnarray}
(1_c \otimes e_{ab}^{e,j'}) \circ \b_{a \otimes b,c} & = & \sum_{d \in L} \sum_{i=1}^{N_{ab}^d}
(1_c \otimes e_{ab}^{e,j'}) \circ (1_c \otimes e_{d,i}^{ab}) \circ \b_{d,c} \circ (e_{ab}^{d,i} \otimes 1_c) 
\nonumber \\
& = & \sum_{d \in L} \sum_{i=1}^{N_{ab}^d}
(1_c \otimes (e_{ab}^{e,j'} \circ e_{d,i}^{ab})) \circ \b_{d,c} \circ (e_{ab}^{d,i} \otimes 1_c) 
\nonumber \\
& = & (1_c \otimes 1_e) \circ \b_{e,c} \circ (e_{ab}^{e,j'} \otimes 1_c) 
\rule{0mm}{.6cm} \nonumber \\
& = & \b_{e,c} \circ (e_{ab}^{e,j'} \otimes 1_c) \label{eq:lin2}
\rule{0mm}{.8cm} 
\end{eqnarray}
Applying (\ref{eq:lin2}), we can re-write (\ref{eq:hex12}) in the form
\begin{eqnarray}
\sum_{i'=1}^{N_{ac}^d} 
R_{ac}^d \left[ \scalebox{.6}{ $\begin{array}{c} i \\ i' \end{array}$ } \right] 
e_{db}^{u,j} 
\circ (e_{ac}^{d,i'} \otimes 1_b) 
\circ \a_{a,c,b} \circ (1_a \otimes \b_{b,c}) 
& = & \label{eq:hex13} \\
& \hspace{-12cm} = & \hspace{-6cm}
\sum_{e \in L} \sum_{i''=1}^{N_{ce}^u} \sum_{j'=1}^{N_{ab}^e}
F_{cab}^u \left[ \scalebox{.6}{$\begin{array}{ccc} i & d & j \\ i'' & e & j' \end{array}$} \right] 
e_{ce}^{u,i''} 
\circ \b_{e,c} \circ (e_{ab}^{e,j'} \otimes 1_c)
\circ \a_{a,b,c} \nonumber
\end{eqnarray}

\[
\raisebox{2cm}{$\sum_{i'=1}^{N_{ac}^d} 
R_{ac}^d \left[ \scalebox{.6}{ $\begin{array}{c} i \\ i' \end{array}$ } \right]
$}
\scalebox{.6}{\begin{tikzpicture}
\makeatletter{}\draw (1.05,5.4) node [minimum height=0, minimum width=0.4cm] (braiding5u4) {$u$};
\draw (1.05,4.5) node [draw, rounded corners=3pt, fill=gray!10, minimum width=1.1cm] (braiding5fdbuj3) {$j$};
\draw (0.6,3.6) node [draw, rounded corners=3pt, fill=gray!10, minimum width=0.8cm] (braiding5facdDerivative1i1) {$i'$};
\draw (1.5,3.6) node [minimum height=0] (braiding5idb16) {$$};
\draw (0.9,2.7) node [draw, minimum width=1.4cm] (braiding5Subscriptalphaacb2) {$\alpha_{a,c,b}$};
\draw (0.3,1.8) node [minimum height=0] (braiding5ida20) {$$};
\draw (1.2,1.8) node [] (braiding5brbc2) {} (1.5,1.35) coordinate (braiding5brbc2dom2)..controls (1.5,1.8) and (0.9,1.8)..(0.9,2.25) coordinate (braiding5brbc2codom1);
\draw[preaction={draw,line width=2mm,white}] (0.9,1.35) coordinate (braiding5brbc2dom1)..controls (0.9,1.8) and (1.5,1.8)..(1.5,2.25) coordinate (braiding5brbc2codom2);
\draw (0.3,0.45) node [minimum height=0, minimum width=0.4cm] (braiding5a15) {$a$};
\draw (0.9,0.45) node [minimum height=0, minimum width=0.4cm] (braiding5b13) {$b$};
\draw (1.5,0.45) node [minimum height=0, minimum width=0.4cm] (braiding5c13) {$c$};
\draw[solid] ([xshift=0.cm]braiding5b13.north) to (braiding5brbc2dom1);
\draw[solid] ([xshift=0.cm]braiding5c13.north) to (braiding5brbc2dom2);
\draw[solid] ([xshift=0.cm]braiding5a15.north) to ([xshift=-0.6cm]braiding5Subscriptalphaacb2.south);
\draw[solid] (braiding5brbc2codom1) to ([xshift=0.cm]braiding5Subscriptalphaacb2.south);
\draw[solid] (braiding5brbc2codom2) to ([xshift=0.6cm]braiding5Subscriptalphaacb2.south);
\draw[solid] ([xshift=-0.6cm]braiding5Subscriptalphaacb2.north) to node [right,font=\footnotesize] {$a$} ([xshift=-0.3cm]braiding5facdDerivative1i1.south);
\draw[solid] ([xshift=0.cm]braiding5Subscriptalphaacb2.north) to node [right,font=\footnotesize] {$c$} ([xshift=0.3cm]braiding5facdDerivative1i1.south);
\draw[solid] ([xshift=0.cm]braiding5facdDerivative1i1.north) to node [right,font=\footnotesize] {$d$} ([xshift=-0.45cm]braiding5fdbuj3.south);
\draw[solid] ([xshift=0.6cm]braiding5Subscriptalphaacb2.north) to node [right,font=\footnotesize] {$b$} ([xshift=0.45cm]braiding5fdbuj3.south);
\draw[solid] ([xshift=0.cm]braiding5fdbuj3.north) to ([xshift=0.cm]braiding5u4.south);
 
\end{tikzpicture}}
\raisebox{2cm}{$= \sum_{e \in L} \sum_{i''=1}^{N_{ce}^u} \sum_{j'=1}^{N_{ab}^e}
F_{cab}^u \left[ \scalebox{.6}{$\begin{array}{ccc} i & d & j \\ i'' & e & j' \end{array}$} \right]$}
\scalebox{.6}{\begin{tikzpicture}
\makeatletter{}\draw (1.05,5.4) node [minimum height=0, minimum width=0.4cm] (braiding6u5) {$u$};
\draw (1.05,4.5) node [draw, rounded corners=3pt, fill=gray!10, minimum width=0.8cm] (braiding6fceuDerivative2i1) {$i''$};
\draw (1.05,3.6) node [] (braiding6brec0) {} (1.5,3.15) coordinate (braiding6brec0dom2)..controls (1.5,3.6) and (0.75,3.6)..(0.75,4.05) coordinate (braiding6brec0codom1);
\draw[preaction={draw,line width=2mm,white}] (0.6,3.15) coordinate (braiding6brec0dom1)..controls (0.6,3.6) and (1.35,3.6)..(1.35,4.05) coordinate (braiding6brec0codom2);
\draw (0.6,2.7) node [draw, rounded corners=3pt, fill=gray!10, minimum width=0.8cm] (braiding6fabeDerivative1j1) {$j'$};
\draw (1.5,2.7) node [minimum height=0] (braiding6idc5) {$$};
\draw (0.9,1.8) node [draw, minimum width=1.4cm] (braiding6Subscriptalphaabc2) {$\alpha_{a,b,c}$};
\draw (0.3,0.45) node [minimum height=0, minimum width=0.4cm] (braiding6a16) {$a$};
\draw (0.9,0.45) node [minimum height=0, minimum width=0.4cm] (braiding6b14) {$b$};
\draw (1.5,0.45) node [minimum height=0, minimum width=0.4cm] (braiding6c14) {$c$};
\draw[solid] ([xshift=0.cm]braiding6a16.north) to ([xshift=-0.6cm]braiding6Subscriptalphaabc2.south);
\draw[solid] ([xshift=0.cm]braiding6b14.north) to ([xshift=0.cm]braiding6Subscriptalphaabc2.south);
\draw[solid] ([xshift=0.cm]braiding6c14.north) to ([xshift=0.6cm]braiding6Subscriptalphaabc2.south);
\draw[solid] ([xshift=-0.6cm]braiding6Subscriptalphaabc2.north) to node [right,font=\footnotesize] {$a$} ([xshift=-0.3cm]braiding6fabeDerivative1j1.south);
\draw[solid] ([xshift=0.cm]braiding6Subscriptalphaabc2.north) to node [right,font=\footnotesize] {$b$} ([xshift=0.3cm]braiding6fabeDerivative1j1.south);
\draw[solid] ([xshift=0.cm]braiding6fabeDerivative1j1.north) to node [right,font=\footnotesize] {$e$} (braiding6brec0dom1);
\draw[solid] ([xshift=0.6cm]braiding6Subscriptalphaabc2.north) to node [right,font=\footnotesize] {$c$} (braiding6brec0dom2);
\draw[solid] (braiding6brec0codom1) to ([xshift=-0.3cm]braiding6fceuDerivative2i1.south);
\draw[solid] (braiding6brec0codom2) to ([xshift=0.3cm]braiding6fceuDerivative2i1.south);
\draw[solid] ([xshift=0.cm]braiding6fceuDerivative2i1.north) to ([xshift=0.cm]braiding6u5.south);
 
\end{tikzpicture}}
\]

\noindent Applying the $R$- and $F$-matrix coefficients to (\ref{eq:hex13}) we get
\begin{eqnarray}
\sum_{i'=1}^{N_{ac}^d} \sum_{f \in L} \sum_{i^{(3)}=1}^{N_{af}^{u}} \sum_{j''=1}^{N_{cb}^{f}}
R_{ac}^d \left[ \scalebox{.6}{ $\begin{array}{c} i \\ i' \end{array}$ } \right]
F_{acb}^u \left[ \scalebox{.6}{$\begin{array}{ccc} i' & d & j \\ i^{(3)} & f & j'' \end{array}$} \right]  
e_{af}^{u,i^{(3)}} 
\circ (1_a \otimes e_{cb}^{f,j''}) 
\circ (1_a \otimes \b_{b,c}) 
& & \label{eq:hex14} \\
& \hspace{-19.8cm} = & \hspace{-10cm}
\sum_{e \in L} 
\sum_{i''=1}^{N_{ce}^u}
\sum_{j'=1}^{N_{ab}^e} 
\sum_{i^{(4)}=1}^{N_{ec}^u}
F_{cab}^u \left[ \scalebox{.6}{$\begin{array}{ccc} i & d & j \\ i'' & e & j' \end{array}$} \right]
R_{ec}^u \left[ \scalebox{.6}{ $\begin{array}{c} i'' \\ i^{(4)} \end{array}$ } \right]
e_{ec}^{u,i^{(4)}} 
\circ (e_{ab}^{e,j'} \otimes 1_c)
\circ \a_{a,b,c} \nonumber
\end{eqnarray}

\[\raisebox{1cm}{$\sum_{i'=1}^{N_{ac}^d} \sum_{f \in L} \sum_{i^{(3)}=1}^{N_{af}^{u}} \sum_{j''=1}^{N_{cb}^{f}}
R_{ac}^d \left[ \scalebox{.6}{ $\begin{array}{c} i \\ i' \end{array}$ } \right]
$}
\scalebox{.6}{\begin{tikzpicture}
\makeatletter{}\draw (0.75,4.5) node [minimum height=0, minimum width=0.4cm] (braiding7u6) {$u$};
\draw (0.75,3.6) node [draw, rounded corners=3pt, fill=gray!10, minimum width=1.1cm] (braiding7fafuPoweri30) {$i^3$};
\draw (0.3,2.7) node [minimum height=0] (braiding7ida22) {$$};
\draw (1.2,2.7) node [draw, rounded corners=3pt, fill=gray!10, minimum width=0.8cm] (braiding7fcbfDerivative2j0) {$j''$};
\draw (0.3,1.8) node [minimum height=0] (braiding7ida21) {$$};
\draw (1.2,1.8) node [] (braiding7brbc3) {} (1.5,1.35) coordinate (braiding7brbc3dom2)..controls (1.5,1.8) and (0.9,1.8)..(0.9,2.25) coordinate (braiding7brbc3codom1);
\draw[preaction={draw,line width=2mm,white}] (0.9,1.35) coordinate (braiding7brbc3dom1)..controls (0.9,1.8) and (1.5,1.8)..(1.5,2.25) coordinate (braiding7brbc3codom2);
\draw (0.3,0.45) node [minimum height=0, minimum width=0.4cm] (braiding7a17) {$a$};
\draw (0.9,0.45) node [minimum height=0, minimum width=0.4cm] (braiding7b15) {$b$};
\draw (1.5,0.45) node [minimum height=0, minimum width=0.4cm] (braiding7c15) {$c$};
\draw[solid] ([xshift=0.cm]braiding7b15.north) to (braiding7brbc3dom1);
\draw[solid] ([xshift=0.cm]braiding7c15.north) to (braiding7brbc3dom2);
\draw[solid] (braiding7brbc3codom1) to ([xshift=-0.3cm]braiding7fcbfDerivative2j0.south);
\draw[solid] (braiding7brbc3codom2) to ([xshift=0.3cm]braiding7fcbfDerivative2j0.south);
\draw[solid] ([xshift=0.cm]braiding7a17.north) to ([xshift=-0.45cm]braiding7fafuPoweri30.south);
\draw[solid] ([xshift=0.cm]braiding7fcbfDerivative2j0.north) to node [right,font=\footnotesize] {$f$} ([xshift=0.45cm]braiding7fafuPoweri30.south);
\draw[solid] ([xshift=0.cm]braiding7fafuPoweri30.north) to ([xshift=0.cm]braiding7u6.south);
 
\end{tikzpicture}}
\]\[
\raisebox{1cm}{$= \sum_{e \in L} 
\sum_{i''=1}^{N_{ce}^u}
\sum_{j'=1}^{N_{ab}^e} 
\sum_{i^{(4)}=1}^{N_{ec}^u}
F_{cab}^u \left[ \scalebox{.6}{$\begin{array}{ccc} i & d & j \\ i'' & e & j' \end{array}$} \right]
R_{ec}^u \left[ \scalebox{.6}{ $\begin{array}{c} i'' \\ i^{(4)} \end{array}$ } \right]
$}
\scalebox{.6}{\begin{tikzpicture}
\makeatletter{}\draw (1.05,4.5) node [minimum height=0, minimum width=0.4cm] (braiding8u7) {$u$};
\draw (1.05,3.6) node [draw, rounded corners=3pt, fill=gray!10, minimum width=1.1cm] (braiding8fecuPoweri40) {$i^4$};
\draw (0.6,2.7) node [draw, rounded corners=3pt, fill=gray!10, minimum width=0.8cm] (braiding8fabeDerivative1j2) {$j'$};
\draw (1.5,2.7) node [minimum height=0] (braiding8idc6) {$$};
\draw (0.9,1.8) node [draw, minimum width=1.4cm] (braiding8Subscriptalphaabc3) {$\alpha_{a,b,c}$};
\draw (0.3,0.45) node [minimum height=0, minimum width=0.4cm] (braiding8a18) {$a$};
\draw (0.9,0.45) node [minimum height=0, minimum width=0.4cm] (braiding8b16) {$b$};
\draw (1.5,0.45) node [minimum height=0, minimum width=0.4cm] (braiding8c16) {$c$};
\draw[solid] ([xshift=0.cm]braiding8a18.north) to ([xshift=-0.6cm]braiding8Subscriptalphaabc3.south);
\draw[solid] ([xshift=0.cm]braiding8b16.north) to ([xshift=0.cm]braiding8Subscriptalphaabc3.south);
\draw[solid] ([xshift=0.cm]braiding8c16.north) to ([xshift=0.6cm]braiding8Subscriptalphaabc3.south);
\draw[solid] ([xshift=-0.6cm]braiding8Subscriptalphaabc3.north) to node [right,font=\footnotesize] {$a$} ([xshift=-0.3cm]braiding8fabeDerivative1j2.south);
\draw[solid] ([xshift=0.cm]braiding8Subscriptalphaabc3.north) to node [right,font=\footnotesize] {$b$} ([xshift=0.3cm]braiding8fabeDerivative1j2.south);
\draw[solid] ([xshift=0.cm]braiding8fabeDerivative1j2.north) to node [right,font=\footnotesize] {$e$} ([xshift=-0.45cm]braiding8fecuPoweri40.south);
\draw[solid] ([xshift=0.6cm]braiding8Subscriptalphaabc3.north) to node [right,font=\footnotesize] {$c$} ([xshift=0.45cm]braiding8fecuPoweri40.south);
\draw[solid] ([xshift=0.cm]braiding8fecuPoweri40.north) to ([xshift=0.cm]braiding8u7.south);
 
\end{tikzpicture}}
\]

\noindent We can apply $R$- and $F$-matrix coefficients again to (\ref{eq:hex14})
\begin{eqnarray}
\sum_{\scalebox{.5}{$\begin{array}{c} i',f \\ i^{(3)},j''\end{array}$}}
\sum_{j^{(3)}=1}^{N_{bc}^{f}}
R_{ac}^d \left[ \scalebox{.6}{ $\begin{array}{c} i \\ i' \end{array}$ } \right]
F_{acb}^u \left[ \scalebox{.6}{$\begin{array}{ccc} i' & d & j \\ i^{(3)} & f & j'' \end{array}$} \right] 
R_{bc}^{f} \left[ \scalebox{.6}{ $\begin{array}{c} j'' \\ j^{(3)} \end{array}$ } \right]
e_{af}^{u,i^{(3)}} 
\circ (1_a \otimes e_{bc}^{f,j^{(3)}})  
& = & \label{eq:hex15} \\  
& \hspace{-20.7cm} = & \hspace{-10.5cm}
\sum_{\scalebox{.5}{$\begin{array}{c}e,i'' \\ j',i^{(4)}\end{array}$}}
\sum_{g \in L} \sum_{i^{(5)}=1}^{N_{ag}^{u}}
\sum_{j^{(4)}=1}^{N_{bc}^{g}}
F_{cab}^u \left[ \scalebox{.6}{$\begin{array}{ccc} i & d & j \\ i'' & e & j' \end{array}$} \right]
R_{ec}^u \left[ \scalebox{.6}{ $\begin{array}{c} i'' \\ i^{(4)} \end{array}$ } \right]
F_{abc}^u \left[ \scalebox{.6}{$\begin{array}{ccc} j' & e & i^{(4)} \\ i^{(5)} & g & j^{(4)} \end{array}$} \right]
e_{ag}^{u,i^{(5)}} 
\circ (1_a \otimes e_{bc}^{g,j^{(4)}})
\nonumber
\end{eqnarray}

\[
\raisebox{1cm}{$\sum_{\scalebox{.5}{$\begin{array}{c} i',f \\ i^{(3)},j''\end{array}$}}
\sum_{j^{(3)}=1}^{N_{bc}^{f}}
R_{ac}^d \left[ \scalebox{.6}{ $\begin{array}{c} i \\ i' \end{array}$ } \right]
F_{acb}^u \left[ \scalebox{.6}{$\begin{array}{ccc} i' & d & j \\ i^{(3)} & f & j'' \end{array}$} \right] 
R_{bc}^{f} \left[ \scalebox{.6}{ $\begin{array}{c} j'' \\ j^{(3)} \end{array}$ } \right]
$}
\scalebox{.6}{\begin{tikzpicture}
\makeatletter{}\draw (0.75,3.6) node [minimum height=0, minimum width=0.4cm] (braiding9u8) {$u$};
\draw (0.75,2.7) node [draw, rounded corners=3pt, fill=gray!10, minimum width=1.1cm] (braiding9fafuPoweri31) {$i^3$};
\draw (0.3,1.8) node [minimum height=0] (braiding9ida23) {$$};
\draw (1.2,1.8) node [draw, rounded corners=3pt, fill=gray!10, minimum width=0.8cm] (braiding9fbcfPowerj30) {$j^3$};
\draw (0.3,0.45) node [minimum height=0, minimum width=0.4cm] (braiding9a19) {$a$};
\draw (0.9,0.45) node [minimum height=0, minimum width=0.4cm] (braiding9b17) {$b$};
\draw (1.5,0.45) node [minimum height=0, minimum width=0.4cm] (braiding9c17) {$c$};
\draw[solid] ([xshift=0.cm]braiding9b17.north) to ([xshift=-0.3cm]braiding9fbcfPowerj30.south);
\draw[solid] ([xshift=0.cm]braiding9c17.north) to ([xshift=0.3cm]braiding9fbcfPowerj30.south);
\draw[solid] ([xshift=0.cm]braiding9a19.north) to ([xshift=-0.45cm]braiding9fafuPoweri31.south);
\draw[solid] ([xshift=0.cm]braiding9fbcfPowerj30.north) to node [right,font=\footnotesize] {$f$} ([xshift=0.45cm]braiding9fafuPoweri31.south);
\draw[solid] ([xshift=0.cm]braiding9fafuPoweri31.north) to ([xshift=0.cm]braiding9u8.south);
 
\end{tikzpicture}}
\]
\[
\raisebox{1cm}{$= \sum_{\scalebox{.5}{$\begin{array}{c}e,i'' \\ j',i^{(4)}\end{array}$}}
\sum_{g \in L} \sum_{i^{(5)}=1}^{N_{ag}^{u}}
\sum_{j^{(4)}=1}^{N_{bc}^{g}}
F_{cab}^u \left[ \scalebox{.6}{$\begin{array}{ccc} i & d & j \\ i'' & e & j' \end{array}$} \right]
R_{ec}^u \left[ \scalebox{.6}{ $\begin{array}{c} i'' \\ i^{(4)} \end{array}$ } \right]
F_{abc}^u \left[ \scalebox{.6}{$\begin{array}{ccc} j' & e & i^{(4)} \\ i^{(5)} & g & j^{(4)} \end{array}$} \right]
$}
\scalebox{.6}{\begin{tikzpicture}
\makeatletter{}\draw (0.75,3.6) node [minimum height=0, minimum width=0.4cm] (braiding10u9) {$u$};
\draw (0.75,2.7) node [draw, rounded corners=3pt, fill=gray!10, minimum width=1.1cm] (braiding10faguPoweri50) {$i^5$};
\draw (0.3,1.8) node [minimum height=0] (braiding10ida24) {$$};
\draw (1.2,1.8) node [draw, rounded corners=3pt, fill=gray!10, minimum width=0.8cm] (braiding10fbcgPowerj40) {$j^4$};
\draw (0.3,0.45) node [minimum height=0, minimum width=0.4cm] (braiding10a20) {$a$};
\draw (0.9,0.45) node [minimum height=0, minimum width=0.4cm] (braiding10b18) {$b$};
\draw (1.5,0.45) node [minimum height=0, minimum width=0.4cm] (braiding10c18) {$c$};
\draw[solid] ([xshift=0.cm]braiding10b18.north) to ([xshift=-0.3cm]braiding10fbcgPowerj40.south);
\draw[solid] ([xshift=0.cm]braiding10c18.north) to ([xshift=0.3cm]braiding10fbcgPowerj40.south);
\draw[solid] ([xshift=0.cm]braiding10a20.north) to ([xshift=-0.45cm]braiding10faguPoweri50.south);
\draw[solid] ([xshift=0.cm]braiding10fbcgPowerj40.north) to node [right,font=\footnotesize] {$g$} ([xshift=0.45cm]braiding10faguPoweri50.south);
\draw[solid] ([xshift=0.cm]braiding10faguPoweri50.north) to ([xshift=0.cm]braiding10u9.south);
 
\end{tikzpicture}}
\]

\noindent Equation (\ref{eq:hex15}) holds iff for fixed $f=g$, $i^{(3)} = i^{(5)}$ and $j^{(3)} = j^{(4)}$
\begin{eqnarray}
\sum_{i'=1}^{N_{ac}^d} 
\sum_{j''=1}^{N_{cb}^f}
R_{a,c}^d \left[ \scalebox{.6}{ $\begin{array}{c} i \\ i' \end{array}$ } \right]
F_{acb}^u \left[ \scalebox{.6}{$\begin{array}{ccc} i' & d & j \\ i^{(3)} & f & j'' \end{array}$} \right] 
R_{b,c}^{f} \left[ \scalebox{.6}{ $\begin{array}{c} j'' \\ j^{(3)} \end{array}$ } \right]
& = & \label{eq:hex16} \\  
& \hspace{-7.8cm} = & \hspace{-4cm}
\sum_{e \in  L} 
\sum_{i''=1}^{N_{ce}^u} 
\sum_{j'=1}^{N_{ab}^e} 
\sum_{i^{(4)}=1}^{N_{ec}^u}
F_{cab}^u \left[ \scalebox{.6}{$\begin{array}{ccc} i & d & j \\ i'' & e & j' \end{array}$} \right]
R_{e,c}^u \left[ \scalebox{.6}{ $\begin{array}{c} i'' \\ i^{(4)} \end{array}$ } \right]
F_{abc}^u \left[ \scalebox{.6}{$\begin{array}{ccc} j' & e & i^{(4)} \\ i^{(3)} & f & j^{(3)} \end{array}$} \right]
\nonumber
\end{eqnarray}
Equation (\ref{eq:hex16}) above agrees with constraint (\ref{eq:hexa}) in Definition \ref{def:modular}.
The very same analysis can be applied to the second Hexagon commutative diagram (\ref{eq:hex2}).  The resulting equation will be
\begin{eqnarray}
\sum_{i'=1}^{N_{ac}^d} 
\sum_{j''=1}^{N_{cb}^f}
Q_{a,c}^d \left[ \scalebox{.6}{ $\begin{array}{c} i \\ i' \end{array}$ } \right]
F_{acb}^u \left[ \scalebox{.6}{$\begin{array}{ccc} i' & d & j \\ i^{(3)} & f & j'' \end{array}$} \right] 
Q_{b,c}^{f} \left[ \scalebox{.6}{ $\begin{array}{c} j'' \\ j^{(3)} \end{array}$ } \right]
& = & \label{eq:hex26} \\  
& \hspace{-7.8cm} = & \hspace{-4cm}
\sum_{e \in  L} 
\sum_{i''=1}^{N_{ce}^u} 
\sum_{j'=1}^{N_{ab}^e} 
\sum_{i^{(4)}=1}^{N_{ec}^u}
F_{abc}^u \left[ \scalebox{.6}{$\begin{array}{ccc} i & d & j \\ i'' & e & j' \end{array}$} \right]
Q_{e,c}^u \left[ \scalebox{.6}{ $\begin{array}{c} i'' \\ i^{(4)} \end{array}$ } \right]
F_{abc}^u \left[ \scalebox{.6}{$\begin{array}{ccc} j' & e & i^{(4)} \\ i^{(3)} & f & j^{(3)} \end{array}$} \right]
\nonumber
\end{eqnarray}
which agrees with constraint (\ref{eq:hexb}) in Definition \ref{def:modular}.  We conclude that a collection $\{R_{ab}^c\}$ of invertible matrices, with inverses $\{Q_{ab}^c\}$, given as part of a modular system $(L,N,F,R,\eps)$, defines a braiding on $\cC \equiv \cC(L,N,F)$.

\noindent \textbf{Pivotal Structure:} Recall from \S\ref{ss:ps}, a pivotal structure in a skeletal fusion category $\cC$ is a monoidal natural isomorphism $\eps: \Delta \ra 1_\cC$, where $\Delta=(\Delta_0,\Delta_1,\Delta_2)$ with $\Delta_0 = 1_{\mathbf{1}}$ and $\Delta_1 = 1_\cC$.  Forgetting for a moment the monoidal constraint, $\eps$ is a natural automorphism of the identity functor on $\cC$.

By applying Lemma \ref{lem:nonsense} to $F=G=1_\cC$ one learns that a pivotal structure $\eps$ in a skeletal fusion category $\cC$ is uniquely determined, as an additive natural isomorphism, by a collection of isomorphisms
\[ \eps(a) : a \xra{\ \cong\ }a \]
For $\eps$ to also be monoidal we need
\begin{equation}
\label{eq:eta11}
\eps(a \otimes b) \circ \Delta_2(a,b) \circ \eps(a)^{-1} \otimes \eps(b)^{-1}  = 1_{a \otimes b}
\end{equation}
(see diagram \eqref{eq:eta1} \S\ref{ss:ps}.)

Let us define 
\[ \eps(a) \defeq \eps_a 1_a \quad , \quad a \in L \]
where $\{\eps_a\}_{a \in L}$ is part of the modular system $(L,N,F,R,\eps)$.  To show $\eps$ gives rise to a pivotal structure on $\cC$, we need to show Equation (\ref{eq:eta11}) holds.

By Remark \ref{ss:gauge} on page (\pageref{ss:gauge}), there is a canonical choice of basis $\{ e_{ab}^{c,i} \}_{i=1}^{N_{ab}^c}  \subset \cC(a \otimes b,c)$, for any $a,b,c \in L$. Subsequently, Equation (\ref{eq:eta11}) holds iff for any $c \in L$ and $e_{ab}^{c,i} \in \cC(a \otimes b,c)$, 
\begin{equation}  
\label{eq:eta2}
e_{ab}^{c,i} \circ \eps(a \otimes b) \circ \Delta_2(a,b) \circ \eps(a)^{-1} \otimes \eps(b)^{-1}  = e_{ab}^{c,i}
\end{equation}
Because $\eps$ is natural we can re-write Equation (\ref{eq:eta2}) 
\begin{equation} 
\label{eq:eta3}
\eps(c) \circ e_{ab}^{c,i} \circ \Delta_2(a,b) \circ \eps(a)^{-1} \otimes \eps(b)^{-1}  = e_{ab}^{c,i}
\end{equation}

Recall from \S\ref{ss:dd} a morphism $\Delta(a,b,x,f) : \Delta_1(a) \otimes \Delta_1(b) \ra \Delta_1(x)$ was defined for $a,b,x \in \cC$ and $f : a \otimes b \ra x$.  In Lemma \ref{lem:g_in_Delta} it was demonstrated that for any $g : x \ra y$ in $\cC$
\[ \Delta_1(g) \circ \Delta(a,b,x,f) = \Delta(a,b,y,g \circ f) \]
Let us take $x = a \otimes b$, $f = 1_x$, $y = c$ and $g = e_{ab}^{c,i} : a \otimes b \ra c$.  Then we can re-write Equation (\ref{eq:eta3}) in the form
\[
\eps(c) \circ  \Delta(a,b,c, e_{ab}^{c,i}) \circ \eps(a)^{-1} \otimes \eps(b)^{-1}  = e_{ab}^{c,i}
\]
or alternately,
\begin{equation} 
\label{eq:eta4}
\eps_c \eps_a^{-1} \eps_b^{-1}  \Delta(a,b,c,e_{ab}^{c,i}) = e_{ab}^{c,i}
\end{equation}

We proceed by showing Equation (\ref{eq:eta4}) is equivalent to condition (\ref{eq:piv}) in Definition \ref{def:modular}.  In particular we calculate $\Delta(a,b,c, e_{ab}^{c,i})$ the same way as we did on page \pageref{eq:piv_calc}, taking $\eps' \equiv 1$.  As before we rely on our choice of gauge $e_{aa^*}^{\mathbf{1},1} = e_a$.

As a result, Equation (\ref{eq:eta4}) holds if and only if 
\begin{equation} 
\label{eq:eta5}
\eps_c \eps_a^{-1} \eps_b^{-1}
\sum_{s =1}^{N_{bc^*}^{a^*}} F_{abc^*}^{\mathbf{1}} \left[ \scalebox{.6}{$ \begin{array}{ccc} i & c & 1 \\ 1 & a^* & s \end{array} $} \right] 
\sum_{t = 1}^{N_{c^*a}^{b^*}} F_{bc^*a}^{\mathbf{1}} \left[ \scalebox{.6}{$ \begin{array}{ccc} s & a^* & 1 \\ 1 & b^* & t \end{array} $} \right] 
\sum_{j = 1}^{N_{ab}^{c}} F_{c^*ab}^{\mathbf{1}} \left[ \scalebox{.6}{$ \begin{array}{ccc} t & b^* & 1 \\ 1 & c & j \end{array} $} \right] 
e_{ab}^{c,j}
= e_{ab}^{c,i}
\end{equation}
Setting up $i=j$ in Equation (\ref{eq:eta5}) results in condition (\ref{eq:piv}) in Definition \ref{def:modular}.  
We conclude the collection $\{\eps_a\}_{a \in L}$, given as part of the modular data $(L,N,F,R,\eps)$, gives rise to a pivotal structure on $\cC$. 

Recall from Lemma \ref{lem:eps_sym} one has an additional symmetry to $\eps$
\[ \eps_{a^*} = (\eps_a^{-1})^*\]
which by $\eps_a = \pm 1$ implies
\[ \eps_{a^*} = \eps_a \]

\noindent \textbf{Sphericality:} So far $\cC$ was endowed with braiding and pivotal structure.  Consider the field extension $k' = k(\sqrt{u_a})$.  Pick a root $\l_a \in k'$ of $p_a(z) = z^2 - u_a$ for every $a \in L$.  Consider the scalar extension
\[ \cC' \defeq \cC \otimes_k k' \]
By Lemma \ref{lem:ext}, $\cC'$ is a fusion category.  The braiding and pivotal structure in $\cC$ extend trivially to $\cC'$.  We re-normalize the evaluation and co-evaluation morphisms in $\cC'$.  
\begin{eqnarray}
e_a & \defeq & e_{aa^*}^{\mathbf{1},1} \otimes \l_a^{-1} \in \cC(a \otimes a^*,\mathbf{1}) \otimes_k k' = \cC'(a \otimes a^*, \mathbf{1}) \label{eq:renorm1} \\
c_a & \defeq & e^{a^*a}_{\mathbf{1},1} \otimes \l_a^{-1} \in \cC(\mathbf{1},a^* \otimes a) \otimes_k k' = \cC'(\mathbf{1},a^* \otimes a) \label{eq:renorm2} 
\end{eqnarray}
The left and right quantum dimensions of $a \in L$ are (see lines (\ref{def:qlx}) and (\ref{def:qrx}))
\[ 
q_l(a)  = (e_{aa^*}^{\mathbf{1},1} \circ \eps(a) \otimes 1_{a^*} \circ e_{\mathbf{1},1}^{aa^*}) \otimes \l_a^{-1} \l_{a^*}^{-1} = \eps_a 1_{\mathbf{1}} \otimes \l_a^{-1} \l_{a^*}^{-1}
\]
\[ 
q_r(a)  = (e_{a^*a}^{\mathbf{1},1} \circ 1_{a^*} \otimes \eps(a)^{-1} \circ e_{\mathbf{1},1}^{a^*a}) \otimes \l_{a^*}^{-1} \l_{a}^{-1} = \eps_a^{-1} 1_{\mathbf{1}} \otimes \l_{a^*}^{-1} \l_{a}^{-1}
\]
Therefore the pivotal structure $\eps$ in $\cC'$ is spherical iff $\eps_a = \eps_a^{-1}$ i.e. $\eps_a = \pm 1$ for all $ a \in L$, and the latter holds by Definition \ref{def:modular}.  The quantum dimension of a simple object $a \in L$ is then
\[ q_a = \frac{\eps_a}{\l_a \l_{a^*}} \]
a number which is not canonical but depends on our choice of roots $\{\l_a\}_{a \in L}$.  As expected the quantum dimension of $a \in L$ is not zero (see for example Lemma 2.4.1 in \cite{BK}).

\noindent \textbf{Balancing:} $\cC'$ is tortile with balancing $\t$ given by $\t = \psi \circ \eps^{-1}$ (see \S\ref{ss:balance}).  So set $\cC(L,N,F,R,\eps) \equiv \cC'$ having the structure of a ribbon fusion category.

\noindent \textbf{Modularity:} Lastly, we check the ribbon fusion category $\cC'$ we constructed is modular.  Modularity holds iff the $S$-matrix is invertible.  The entries of the $S$-matrix are defined to be
\[
\raisebox{2cm}{$S_{ab} = 
$}
\scalebox{.6}{
 $} \right]. \]
Let $D$ denote the diagonal matrix of dimensions $|L| \times |L|$ whose diagonal entries are the quantum dimensions of $a \in L$. Then
\[ S = D \hat{S} D \]
Since the quantum dimensions of simple objects $a \in L$ are non-zero, the invertibility of the $S$-matrix is equivalent to the invertibility of $\hat{S}$ which is given in condition (\ref{eq:modularity}) of Definition \ref{def:modular}.

\noindent \textbf{Equivalence:} Let $\cC$ be a modular category defined over $k$, and $(L,N,F,R,\eps)(\cC)$ a modular system extracted from it.  The modular system $(L,N,F,R,\eps)(\cC)$ depends on a choice of basis $\{\e_{ab}^{u,i}\}_{i=1}^{N_{ab}^u}$ for each morphism space $\cC(a \otimes b,u)$ (restricted by certain ad-hoc choices).  As in proof of Proposition \ref{prop:fusion} we define a monoidal functor 
\[ F = (F_0,F_1,F_2) : \cC(L,N,F,R,\eps)(\cC) \ra \cC \otimes_k k' \]
\[ F_1 : f  \in Ob \, \cC(L,N,F,R,\eps)(\cC) \mapsto \bigoplus_{a \in L} a^{\oplus f(a)} \in Ob \, \cC \]
\[ F_1 : \cC((L,N,F,R,\eps)(\cC))(f,g) \longmapsto \cC(F_1(f),F_1(g)) \otimes_k k' \] 
and the isomorphism
\[ \bigoplus_{a \in L} \Mat_{g(a) \times f(a)}(k') \xra{\ \cong \ } 
{\cC} \left( \bigoplus_{a \in L} a^{\oplus f(a)},  
\bigoplus_{a \in L} a^{\oplus g(a)} \right)  \otimes_k k' \]
defines $F_1$ on morphisms
\[ F_2(a,b) = \sum_{u \in L} \sum_{i=1}^{N_{ab}^u} \e_{ab}^{u,i} \otimes 1 \]
\[ F_0 : F_1(\mathbf{1}) \ra \mathbf{1} = 1_\mathbf{1} \otimes 1 \]  
We need to check $F$ is a braided functor, namely,
\[
\xymatrix{
F_1(a) \otimes F_1(b) \ar[rr]^{F_2(a,b)} \ar[d]_{\b_{F_1(a),F_1(b)}} && F_1(a \otimes b) \ar[d]^{F_1(\b)} \\
F_1(b) \otimes F_1(a) \ar[rr]_{F_2(b,a)} && F_1(b \otimes a)
}
\]
Diagram equals
\[
\xymatrix{
a \otimes b \ar[rrr]^{F_2(a,b) = \sum_u \sum_i \e_{ab}^{u,i} \otimes 1} \ar[d]_{\b_{a,b} \otimes 1} 
&&& \bigoplus_{u \in L} u^{\oplus N_{ab}^u} \ar[d]^{\sum_{u \in L} R_{ab}^u \otimes 1} \\
b \otimes a \ar[rrr]_{F_2(b,a) = \sum_u \sum_j \e_{ba}^{u,j} \otimes 1} &&&  \bigoplus_{u \in L} u^{\oplus N_{ba}^u}
}
\]
commuting by definition of $R_{ab}^u$.  Hence $\cC((L,N,F,R,\eps)(\cC)) \simeq \cC$ as modular categories.
\end{mylist}
\end{proof}

\section{Applications and Examples} 

\subsection{Fusion and Modular Varieties} 
\label{sec:fusion-and-modular-varieties}

Following the language of \cite{O}, a pair $(L,N)$, as in Definition \ref{def:fusion}, gives rise to a unital based ring $R=R(L,N)$ with $\ZZ_{\geq 0}$-basis $L$ and relations given by $N$. This is the Grothendieck ring of the fusion category $\cC(L,N,F)$ and the modular category $\cC(L,N,F,R,\eps)$. 

\begin{definition} 
Given a pair $(L,N)$, as in Definition \ref{def:fusion}, we define its associated {\em fusion variety} $X(L,N)$ to be the set of complex solutions to the algebraic equations (\ref{eqn:triangle})-(\ref{eqn:pentagon}) including the requirement that the $F$-matrices are invertible.  
\end{definition}

\begin{definition}
Given a triple $(L,N,\eps)$, as in Definition \ref{def:modular}, we define its associated {\em modular variety} $X(L,N,\eps)$ to be the set of complex solutions to the algebraic equations (\ref{eqn:triangle})-(\ref{eqn:pentagon}), (\ref{eq:hexa})-(\ref{eq:piv}) including the requirement that the $F$-matrices, $R$-matrices and $\hat{S}$-matrix are invertible.  
\end{definition}

Clearly, the fusion and modular varieties, $X(L,N)$ and $X(L,N,\eps)$, are complex affine algebraic variety (where it is possible that $X(L,N)$ or $X(L,N,\eps) = \emptyset$), and there is a forgetful map $X(L,N,\eps) \ra X(L,N)$.  Every point $F \in X(L,N)$ is a fusion system $(L,N,F)$ defined over $\CC$, and every point $(F,R) \in X(L,N,\eps)$ is a modular system $(L,N,F,R,\eps)$ defined over $\CC$.

Let $G = \prod_{a,b,u \in  L} \GL_{N_{ab}^u} (\CC)$.  The group $G$ acts algebraically on $X(L,N)$ and $X(L,N,\eps)$; for $g = \prod_{a,b,u \in  L} (g_{ab}^u) \in G$ and $g^{-1} = \prod_{a,b,u \in  L} (g_u^{ab}) \in G$, we have $g : F \mapsto F^g \in X(L,N)$ and $g : (F,R) \mapsto (F^g,R^g) \in X(L,N,\eps)$
\begin{equation}
\label{gauge-change}
(F^g)_{abc}^u \left[ \scalebox{.6}{$\begin{array}{ccc} i' & d & j' \\ m' & e & n' \end{array}$} \right] \defeq
\sum_{i=1}^{N_{ab}^d} \sum_{j=1}^{N_{dc}^u} \sum _{m=1}^{N_{ae}^u} \sum_{n=1}^{N_{bc}^e} (g_{ab}^{d})_j^{j'} (g_{dc}^u)_i^{i'} F_{abc}^u \left[ \scalebox{.6}{$\begin{array}{ccc} i & d & j \\ m & e & n \end{array}$} \right] (g_u^{ae})^m_{m'} (g_e^{bc})^n_{n'}
\end{equation}
\[(R^g)_{ab}^u  \left[ \scalebox{.6}{$\begin{array}{c} i' \\ m' \end{array}$} \right] \defeq \sum_{i=1}^{N_{ba}^u} \sum_{m=1}^{N_{ba}^u} (g_{ba}^u)_i^{i'} R_{ab}^u  \left[ \scalebox{.6}{$\begin{array}{c} i \\ m\end{array}$} \right] (g_u^{ab})^m_{m'} 
\]

Each $g \in G$ induces a monoidal equivalence 
\begin{eqnarray*}
&& \cF^g = (\cF^g_0,\cF^g_1,\cF^g_2) : \cC \ra \cC^g \\ 
&& \cC \defeq \cC(L,N,F) \text{ or } \cC(L,N,F,R,\eps) \\
&& \cC^g \defeq \cC(L,N,F^g) \text{ or } \cC(L,N,F^g,R^g,\eps)
\end{eqnarray*}
with $\cF^g_1 = 1_\cC$ and
\begin{eqnarray*}
&& \cF^g_2(a,b) : \cF^g_1(a) \otimes_{\cC^g} \cF^g_1(b) \xra{\ \cong \ } \cF^g_1(a \otimes_\cC b) \\ 
&& \cF^g_2(a,b) = \sum_{u \in L} \cF^g_2(a,b)(u) \in \bigoplus_{u \in L} \GL_{N_{ab}^u} (k) \subset \bigoplus_{u \in L} \Mat_{(a \otimes b)(u) \times (a \otimes b)(u)} (k) \\
&& \cF^g_2(a,b) = \sum_{u \in L} \cF^g_2(a,b)(u) = \sum_{u \in L} (g_{ab}^u)^{-1} = \sum_{u \in L} g_u^{ab}
\end{eqnarray*}
which is braided in the modular case.  Thus the elements of a $G$-orbit give rise to equivalent categories.

One can find those $g \in G$ for which $F^g = F$ and $R^g = R$.  The second Davydov-Yetter cohomology group $H^2(\cC)$ classifies infinitesimal basis changes that leave $F$ fixed up to natural automorphisms of the identity functor (see \cite{Ki}).  However, by Ocneanu Rigidity $H^n(\cC)=0$ for all $n > 0$ (see \cite{ENO}).  Therefore, $F^g = F$ and $R^g = R$ if and only if there exists $\zeta : L \ra \CC^\times$ such that
\[ g_{ab}^u = \frac{\zeta(a) \zeta(b)}{\zeta(u)} 1_{N_{ab}^u} \]
This condition is equivalent to $\cF^g$ being monoidally equivalent to the identity, i.e. $\cF^g \simeq^{\otimes} 1_{\cC(L,N,F = F^g)}$.  In particular, the stabilizer of $F \in X(L,N)$ in $G$ does not depend on $F$.    By \cite{Hum} \S8.3 then, all orbits are smooth and closed, hence $X(L,N)$ is smooth, and the same applies for $X(L,N,\eps)$.

Consider $\Aut(R)$, the group of auto-equivalences of $R=R(L,N)$ in the sense of unital based rings (see \cite{O} Definition 1(iv)).  Each $\vp \in \Aut(R)$ is given by a bijection $\vp : L \ra L$ fixing $\mathbf{1}$ and  satisfying $N_{ab}^{u} = N_{\vp(a)\vp(b)}^{\vp(u)}$.  For this reason $\Aut(R)$ is finite.  

Each $\vp$ defines a fusion (modular) system $F^\vp \in (L,N)$ (or $(F^\vp,R^\vp) \in (L,N,\eps)$) with 
\[ (F^\vp)_{abc}^u = F_{\vp^{-1}(a)\vp^{-1}(b)\vp^{-1}(c)}^{\vp^{-1}(u)} \  , \ 
(R^\vp)_{ab}^u = R_{\vp^{-1}(a)\vp^{-1}(b)}^{\vp^{-1}(u)} \]
and induces a monoidal equivalence 
\begin{eqnarray*}
&& \cF^\vp = (\cF^\vp_0,\cF^\vp_1,\cF^\vp_2) : \cC(L,N,F) \ra \cC(L,N^\vp,F^\vp) \\
&& \cF^\vp_1(f) = f \circ \vp^{-1}\,,\,\,\,\, f \in \text{Ob}\,\cC(L,N,F) \\
&& \cF^\vp_1(A)(u) = A(\vp^{-1}(u))\,,\,\,\,\, A \in \text{Mor}\,\cC(L,N,F) \\
&& \cF^\vp_2(a,b) = 1
\end{eqnarray*}
which is again braided in the modular case.  Note that $\cF_1^\vp(a) = \vp(a)$. 

The group $\Aut(R)$ acts on $G$ by permutation of labels of $L$.  We may combine the two actions together to an action of the semi-direct product $\Aut(R) \ltimes G$ on $X = X(L,N) \text{ or } X(L,N,\eps)$.

Let $\cC$ be a fusion (modular) category over $\CC$, and $\e$ be a choice of gauge.  By Lemma \ref{lem:fusion_extract}, $\e$ gives rise to a fusion (modular) system $ F \in X(L,N)$ (or $(F,R) \in X(L,N,\eps)$).  A choice $\e^g$ gives rise to the fusion (modular) system $F^g \in X(L,N)$ (or $(F^g,R^g) \in X(L,N,\eps)$), where
\[ (\e^g)_{ab}^{u,j'} = \sum_{j=1}^{N_{ab}^u} (g_{ab}^u)_j^{j'} \e_{ab}^{u,j} \quad , \quad g_{ab}^u \in \GL_{N_{ab}^u} (\CC) \quad , \quad j' = 1, \ldots, N_{ab}^u \]

In the proof to Proposition \ref{prop:fusion}, we defined a monoidal equivalence $\cF : \cC(L,N,F) \ra \cC$ with $\cF_2(a,b) = \sum_{u,i} \e_{ab}^{u,i}$.  We can use $\cF$ to define monoidal auto-equivalences of $\cC$
\[
\xymatrix{
\cC \ar[rr]^{\cF^{-1}} && \cC(L,N,F) \ar[dl]_{\cF^g}^{\cF^\vp} \\
& \cC(L,N,F^g) \ar[ul]^{\cF} &
}
\]
This way we have $G \hookrightarrow \Aut_0^{\otimes(\text{br})} (\cC)$ with the latter denoting the group of monoidal (braided) auto-equivalences of $\cC$ with underlying functor $1_\cC$, up to monoidal natural isomorphisms.  The only data involved for $\cF^g$ is
\[
\xymatrix{
\sum_u u^{\oplus N_{ab}^u} \ar[rr]^{\cF_2^{-1}(a,b) = \sum \e_{u,i}^{ab}} && a \otimes b \ar[dl]^{\cF_2^g(a,b) = \sum (g^u_{ab})^{-1}} \\
& a \otimes b \ar[ul]^{\cF_2(a,b) = \sum \e_{ab}^{u,j}} &
}
\]
Therefore any auto-equivalence with underlying functor the identity is given in terms of gauge change, namely,
\[ G \cong \Aut_0^{\otimes(\text{br})} (\cC) \]
Every monoidal auto-equivalence of $\cC$ induces an auto-equivalence of $R$, so $\Aut(R) \hookrightarrow \Aut^{\otimes(\text{br})} (\cC) \ra \Aut(R)$ composes to the identity.  Overall,
\[ \Aut^{\otimes(\text{br})} (\cC) \cong \Aut(R) \ltimes G \]
where $\Aut^{\otimes(\text{br})}$ denotes the group of monoidal (braided) auto-equivalences of $\cC$, up to monoidal natural isomorphisms.

Fixing a pair $(L,N)$ (or triple $(L,N,\eps)$) and $R=R(L,N)$, the $(\Aut(R) \rtimes G)$-orbits of $X = X(L,N) \text{ or } X(L,N,\eps)$ are in $1:1$ correspondence with equivalence classes of fusion (modular) categories with Grothendieck ring $R$, of which there are finitely many by Ocneanu Rigidity.  In particular, $G$ has finitely many orbits in $X$ (recall $\Aut(R)$ is finite).

In the event that $R$ is a hyperring, i.e. $N_{a b}^u \in \{0,1\}$ for all $a,b,u \in L$, additional characterization applies. In particular, one may compute a finite set of $G$-invariant quantities which determine the $G$-orbit of a point $F \in X(L,N)$. This is described in Appendix~\ref{appendix}.
 
\subsection{Arithmetics of Fusion and Modular Categories}

Let $\cC$ be a fusion (modular) category over $\CC$.  As was pointed out by Etingof, Nikshych and Ostrik in \cite{ENO}, every fusion (modular) category is defined over a number field, in other words:

\begin{prop}
\label{prop:extension}
Let $\cC$ be a fusion (modular) category over $\CC$.  There exists a finite extension $k$ of $\QQ$ and a fusion (modular) category $\cC'$ defined over $k$ such that $\cC \simeq \cC' \otimes_{k} \CC$ as fusion (modular) categories over $\CC$.
\end{prop}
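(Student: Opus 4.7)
My plan is to reduce the statement to the algebro-geometric fact that a nonempty affine variety over a subfield $k \subset \CC$ always has a $\bar k$-point.

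First I would extract from $\cC$ a modular system via Proposition~\ref{prop:modular}(i), giving a point $F \in X(\CC)$ where $X = X(L,N,\eps)$ is the modular variety of Section~\ref{sec:fusion-and-modular-varieties}. The variety $X$ sits inside a product of affine spaces cut out by the pentagon, hexagon, and pivotal equations together with the non-vanishing of certain determinants---all defined by polynomials with integer coefficients---so $X$ is an affine variety defined over $\QQ$. The gauge group $G = \prod_{a,b,u} \GL_{N_{ab}^u}$ and its action on $X$ given by formula~\eqref{gauge-change} are likewise $\QQ$-rational.

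Next I would invoke Ocneanu rigidity: equivalence classes of fusion (modular) categories with Grothendieck ring $R(L,N)$ are in bijection with the $G(\CC)$-orbits of $X(\CC)$, of which there are only finitely many. All orbits are closed, smooth, and carry the same (constant) stabilizer coming from characters $\zeta : L \to \CC^\times$, as recorded in Section~\ref{sec:fusion-and-modular-varieties}. The absolute Galois group $\Gal(\qbar/\QQ)$ permutes this finite set of orbits, so the stabilizer of the orbit $O$ of our point $F$ is an open subgroup; its fixed field is a number field $k$. One then checks that $O$ is defined over $k$ as a closed subvariety: its defining ideal in $\cO(X_{\qbar})$ is $\Gal(\qbar/k)$-stable and hence descends. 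Since $O$ is a nonempty affine variety over $k$, the weak Nullstellensatz produces a point $F' \in O(\bar k)$ whose coordinates lie in some finite extension $k' \supset \QQ$.

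To conclude, since $F'$ lies in the orbit $G(\CC)\cdot F$, the induced braided monoidal equivalence $\cF^g$ of Section~\ref{sec:fusion-and-modular-varieties} combined with Proposition~\ref{prop:modular}(iii) yields an equivalence $\cC(L,N,F',R',\eps)\otimes_{k'}\CC \simeq \cC$; after possibly enlarging $k'$ to include the auxiliary square roots $\sqrt{u_a}$ demanded by Proposition~\ref{prop:modular}(ii), we obtain the required $\cC'$ defined over a number field. The fusion-only case is identical, replacing $X(L,N,\eps)$ by $X(L,N)$ and Proposition~\ref{prop:modular} by Proposition~\ref{prop:fusion}. The main obstacle is the descent step: showing that the specific orbit $O$ (not just the ambient variety $X$) is defined over a number field. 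This is exactly where the finiteness of the orbit set provided by Ocneanu rigidity is essential, as it forces the $\Gal(\qbar/\QQ)$-stabilizer of $O$ to be open; without this finiteness one could only say that $\cC$ is defined over $\CC$.
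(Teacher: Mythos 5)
Your argument is correct in outline and reaches the same conclusion, but it takes a genuinely different path from the paper's. The paper proves Lemma~\ref{lemma:algebraic_point}, which extracts an algebraic point from each $G$-orbit by an inductive peeling argument: pick an irreducible component (which is $G$-invariant since $G$ is connected), show its unique open orbit $\cO$ is defined over $\qbar$ by projecting the $\qbar$-variety $\{(g,x) : gx = p\}$ onto the second factor, then repeat on the complement $X \setminus \cO$. The paper never mentions Galois groups; the descent is implicit in the constructibility of images. You instead argue directly at the level of $\Gal(\qbar/\QQ)$: finiteness of orbits (from Ocneanu rigidity) forces the stabilizer of the orbit $O$ through $F$ to be open, its fixed field $k$ is a number field, and $O$ descends to $k$ by Galois descent for (locally closed) subschemes, whence $O(\qbar) \neq \emptyset$. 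Both approaches hinge on exactly the same inputs---$X$ and $G$ defined over $\QQ$, finitely many $G$-orbits---but yours packages them through explicit Galois cohomological descent rather than the paper's constructive peeling. A small caveat on your step that $G(\CC)$-orbits in $X(\CC)$ coincide with $\CC$-points of geometric orbits defined over $\qbar$: this is fine because $G$ is connected and the orbit maps $G \to O_i$ are smooth surjective, but it is worth flagging since $\Gal(\qbar/\QQ)$ acts a priori only on $\qbar$-schemes. You also correctly note the need to adjoin $\sqrt{u_a}$ in the modular case (from Proposition~\ref{prop:modular}(ii)), a point the paper elides by writing ``the same proof applies.'' Finally, the paper phrases the orbit correspondence in terms of $(\Aut(R) \rtimes G)$-orbits rather than $G$-orbits; since $\Aut(R)$ is finite this doesn't change the finiteness you need, but the bijection with equivalence classes you cite strictly holds for the larger group.
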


It has become customary to refer to $k$ from Proposition \ref{prop:extension} as the {\em field of definition} or the {\em defining number field} of $\cC$ although the use of the definite article is possibly misleading, one could choose $k$ to be minimal, however it is not necessarily unique.

\begin{lemma}
\label{lemma:algebraic_point}
Let $X$ be a complex affine algebraic variety given by a set of polynomial equations defined over $\qbar$.  Let $G$ be a connected complex algebraic group acting on $X$ which is also defined by polynomial equations over $\qbar$.  Assume $G$ has finitely many orbits in $X$.  Then every $G$-orbit contains an algebraic point.
\end{lemma}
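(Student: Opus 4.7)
The plan is to induct on the number $n$ of $G$-orbits on $X$. For the base case $n=1$, the variety $X$ is itself a single orbit; by connectedness of $G$ it is irreducible, and being a nonempty $\qbar$-defined variety it carries a $\qbar$-rational point by Hilbert's Nullstellensatz.

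For the inductive step the key task is to exhibit a \emph{closed} $G$-orbit that is already defined over $\qbar$. Since orbits of algebraic group actions are locally closed and there are only finitely many, at least one orbit must be closed (e.g., any orbit of minimal dimension). Let $C \subseteq X$ be the union of all closed $G$-orbits. Each $\sigma \in \Aut(\CC/\qbar)$ permutes the set of $G$-orbits, because $G$, $X$ and the action are $\qbar$-defined: $\sigma(g\cdot x) = \sigma(g)\cdot\sigma(x)$, so $\sigma$ sends the orbit of $x$ to the orbit of $\sigma(x)$. The property of being a closed subvariety is also $\sigma$-invariant, so $\sigma(C)=C$. By Galois descent for closed subschemes (the $\sigma$-invariance of $I(C)\subseteq\CC[X]$ forces it to be generated by $I(C)\cap\qbar[X]$), the subvariety $C$ is defined over $\qbar$.

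Because $\qbar$ is algebraically closed, every irreducible $\qbar$-variety is geometrically irreducible, so the decomposition of $C$ into irreducible components over $\qbar$ base-changes to its decomposition into components over $\CC$. But the $\CC$-irreducible components of $C$ are precisely the (closures of the) closed $G$-orbits; hence each closed orbit is individually $\qbar$-defined, and a $\qbar$-rational point in each is supplied by the Nullstellensatz. Now pick one such closed orbit $\cO_0$ and consider the open subvariety $X':=X\setminus\cO_0$: it is $\qbar$-defined, $G$-stable, and $G$ acts on it with exactly $n-1$ orbits. Applying the inductive hypothesis to $X'$ (the argument never used affineness of $X$, only that it is a $\qbar$-defined variety of finite type with $G$-action having finitely many orbits) produces a $\qbar$-point in each remaining orbit, completing the induction.

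The main technical obstacle is the Galois descent step. The extension $\CC/\qbar$ is neither finite nor algebraic nor Galois in the classical sense, so one must justify that an $\Aut(\CC/\qbar)$-invariant closed subvariety of a $\qbar$-defined variety is automatically defined over $\qbar$. This is standard but worth writing out: choosing a $\qbar$-basis $\{e_\alpha\}$ of $\CC$ with $e_0=1$ identifies $\CC[X]$ with $\bigoplus_\alpha \qbar[X]\cdot e_\alpha$; an ideal $I\subseteq\CC[X]$ that is stable under all $\sigma\in\Aut(\CC/\qbar)$ is then a sum of $\qbar[X]$-submodules of the individual summands, and tracing through the action shows $I=(I\cap\qbar[X])\cdot\CC[X]$, which is the statement that $V(I)$ is $\qbar$-defined.
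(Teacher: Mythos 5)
Your proof is correct but follows a genuinely different route from the paper's. The paper works \emph{top-down}: since $G$ is connected, each irreducible component of $X$ is $G$-stable and contains a unique Zariski-dense open orbit $\cO$; density of $\qbar$-points in $\qbar$-defined irreducible varieties immediately yields an algebraic point $p\in\cO$, and then $\cO$ is exhibited as the (automatically $\qbar$-constructible) image of the $\qbar$-defined incidence variety $\{(g,x) : gx=p\}\subseteq G\times X$ under projection, so that $X\setminus\cO$ is again a $\qbar$-defined closed subvariety and one iterates. You work \emph{bottom-up}: you start from the closed (minimal-dimension) orbits, argue the union $C$ of closed orbits is $\Aut(\CC/\qbar)$-stable and therefore $\qbar$-defined by descent, then split $C$ into components to see each closed orbit individually over $\qbar$, and induct by removing one. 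The trade-off is that the paper's route leans only on the elementary density of $\qbar$-points and Chevalley constructibility, whereas yours invokes Galois descent over the transcendental extension $\CC/\qbar$, which is a heavier (though standard) tool. Two small cautions on your write-up: the justification of the descent step --- that an $\Aut(\CC/\qbar)$-stable ideal ``is a sum of $\qbar[X]$-submodules of the individual summands'' --- is not quite right as phrased; what one actually shows (via the usual minimal-support argument, using that $\qbar$ is the fixed field) is $I = \CC\cdot(I\cap\qbar[X])$. And when you pass to $X' = X\setminus\cO_0$ you leave the affine world, so as you note the induction really has to be run for quasi-affine $\qbar$-varieties; that is harmless (density of $\qbar$-points and descent both survive), but it is worth stating the inductive hypothesis in that generality from the start, since the paper has the same issue and handles it in the same spirit.
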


\begin{proof}
Since $G$ is connected, any irreducible component is $G$-invariant (see \cite{Hum} \S8.2).  Pick one.  It is an irreducible complex affine variety given by a set of polynomial equations defined over $\qbar$ equipped with a $G$-action.  It contains a unique Zariski-open orbit $\cO$.  A priori, $\cO$ is defined over $\CC$.  We claim it is in fact defined over $\qbar$.  Algebraic points of a complex affine algebraic variety defined over $\qbar$ are dense in the Euclidean and hence the Zariski topology (this is a consequence of the Nullstellensatz).  Therefore there exists an algebraic point $p\in \cO$.  Consider the sub-variety
\[
\{ (g,x) ,|\, gx=p \} \subseteq G \times X
\]
The condition $gx=p$ is defined over $\qbar$.  The projection of this sub-variety on the second component is exactly $\cO$, therefore a complex affine variety defined over $\qbar$.  We consider the complement of $\cO$ in $X$ which is also defined over $\qbar$.  We pick an irreducible component of $X \backslash \cO$ and repeat the argument. By choosing different irreducible components at every stage we eventually exhaust all orbits of $G$.
\end{proof}

\begin{proof}[Proof of Proposition \ref{prop:extension}]
Let $\cC$ be a fusion category over $\CC$ with associated pair $(L,N)$ and choice of gauge $\e$, by which we associate to $\cC$ a point $F \in X(L,N)$.   By Lemma \ref{lemma:algebraic_point}, we get an algebraic point $F_{\mb{alg}}$ in the $G$-orbit of $F$.  We let $k = \QQ(F_{\mb{alg}})$.  The category $\cC(L,N,F_{\mb{alg}})$ is defined over $k$.  And so $\cC \cong \cC(L,N,F_{\mb{alg}}) \otimes_k \CC$ as fusion categories.  The same proof applies to a modular category.
\end{proof}

When $\cC$ is modular, Vafa's theorem asserts that the $T$-matrix elements of $\cC$ are roots of unity (see \cite{V}).  Results due to de~Boere--Goeree, and Coste--Gannon assert that the entries of the $S$-matrix of a modular category lie in a cyclotomic extension of $\QQ$ (see \cite{dBG, CG}).  One may ask whether one may choose the {field of definition}, $k$, to be cyclotomic.  In some simple cases this is possible.

A {pointed modular category} is a modular category whose fusion rules are given by a finite abelian group $G$, namely, whose Grothendieck ring is the group ring $\ZZ G$ of a finite abelian $G$.  As a consequence of this simple fusion structure, unitary solutions to the pentagon equations (\ref{eqn:pentagon}), up to monoidal equivalence, are parameterized by $H^3(G; {U}(1))$ (here $G$ acts trivially on $U(1)$), and it is always possible to choose a representative cocycle with values in $\{\pm 1\}$ \cite{MS}. The entries of the $\tilde{S}$-matrix in this case are all roots of unity \cite{Wang12}.  Since all non-vanishing fusion state spaces are trivially identified with $\CC$, $F$ and $R$-matrices may be reduced to $1$. Therefore, for a unitary pointed modular category one can choose $k$ to be cyclotomic. 

Morrison--Snyder showed that for some `exotic' fusion categories, $k$ is not necessarily cyclotomic \cite{MoS}. However, they further showed that the Drinfeld centers of those exotic fusion categories do have cyclotomic defining number fields.

It seems to be a folklore theorem that all modular categories from the standard quantum group constructions have cyclotomic defining number fields.  But we cannot locate a definite proof in the literature.  Known examples, therefore, prompt the following conjecture.

\begin{con}
\label{con:cyclotomic}
Every modular category defined over $\CC$ has a cyclotomic defining number field.
\end{con}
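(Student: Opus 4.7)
By Proposition~\ref{prop:extension} we may already fix a defining algebraic number field $k$ and a modular system $(F,R) \in X(L,N,\eps)(k)$ for $\cC$. The goal is to find a gauge transformation $g \in G = \prod_{a,b,u}\GL_{N_{ab}^u}(\bar\QQ)$ such that $(F^g,R^g)$ has entries in a cyclotomic field $K_\cC \subset \QQ^{\mathrm{cyc}}$. The intrinsic data is already known to be cyclotomic: the $T$-matrix by Vafa's theorem, the $S$-matrix by de~Boere--Goeree and Coste--Gannon, the eigenvalues of each $R_{ab}^c$ as roots of the twists, and in fact the whole modular representation of $\mathrm{SL}(2,\ZZ)$ by the Ng--Schauenburg congruence subgroup theorem. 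Thus the only obstruction to cyclotomicity is the gauge-dependent part of $(F,R)$, and the task is to realize the category over a cyclotomic field via a suitable basis change.

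The first step is to analyze the action of $\Gamma = \Gal(\bar\QQ/\QQ)$ on the modular variety $X = X(L,N,\eps)$. Since the defining polynomials of Definition~\ref{def:modular} have rational coefficients, $\Gamma$ acts on $X(\bar\QQ)$, and by Lemma~\ref{lem:ext} it permutes the $(\Aut(R)\ltimes G)$-orbits compatibly with the Galois twisting of categories. Let $\cO \subset X(\bar\QQ)$ denote the $G$-orbit attached to $\cC$ and let $H \subset \Gamma$ be its stabilizer, so that $\cO$ is defined over $K_\cC := (\bar\QQ)^H$. Using cyclotomicity of the intrinsic data, one checks that every $\sigma \in \Gal(\bar\QQ/\QQ^{\mathrm{cyc}})$ fixes the fusion rules, $S$, $T$, and the spectra of the $R$-matrices, hence (assuming these invariants separate isomorphism classes of modular categories with fusion ring $R(L,N)$) fixes the $G$-orbit of $(F,R)$. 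This would give $\Gal(\bar\QQ/\QQ^{\mathrm{cyc}}) \subset H$ and therefore $K_\cC \subset \QQ^{\mathrm{cyc}}$ by Galois theory.

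The second and much harder step is descent: producing an actual $K_\cC$-rational point in $\cO$, i.e.\ a gauge change over $\bar\QQ$ that lands $(F^g,R^g)$ in $X(K_\cC)$. This is controlled by a class in $H^1(\Gamma/H, G_{(F,R)})$, where $G_{(F,R)}$ is the stabilizer of the point. By the analysis in Section~\ref{sec:fusion-and-modular-varieties}, this stabilizer is the torus of rescalings $\zeta:L\to \bar\QQ^\times$, and Ocneanu rigidity ensures $\cO$ is smooth and closed, so the descent problem is at worst a twisted-torus $H^1$ that can be approached via Hilbert~90 type arguments, possibly after absorbing some of the cocycle into an $\Aut(R)$-twist permuting the simple objects. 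I would attempt to construct the required cocycle-killing gauge by using the explicit formulas for $G$-invariant intrinsic quantities, such as \eqref{eq:modularity} for $\hat{S}_{ab}$ together with its analogues for the $T$-matrix and for higher invariants built from $F$ and $R$, and inverting these to solve for a cyclotomic representative of $\cO$.

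The main obstacle I foresee is precisely this descent step. The Morrison--Snyder examples show that in the non-modular fusion setting such $H^1$ obstructions can fail to vanish, so the argument must genuinely use features special to modular categories --- the non-degenerate braiding, the congruence property, and the rigidity of the modular representation --- to force triviality of the cocycle. I expect that translating the cyclotomicity of all $G$-invariants into the existence of a cyclotomic $G$-orbit representative, rather than merely a $G$-orbit defined over a cyclotomic field, will require a genuinely new idea; a natural candidate is to combine the modular representation with a systematic use of higher Frobenius--Schur indicators (which are $G$-invariant and known to be cyclotomic) as coordinates on the orbit space, and to show that these indicators suffice to cut out a $K_\cC$-rational point inside $\cO$.
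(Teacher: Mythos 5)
The statement you are asked to prove is Conjecture~\ref{con:cyclotomic}; the paper contains no proof of it, only a heuristic. Your proposal is therefore not being measured against an actual argument in the paper, but it is still useful to compare it with the sketch the authors offer.

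Your strategy is a Galois-descent argument on the modular variety: first show the $G$-orbit $\cO$ is defined over a cyclotomic field by exhibiting enough cyclotomic $G$-invariants, then descend to a cyclotomic point by killing an $H^1$ obstruction valued in the stabilizer torus. The paper's suggested route is structurally different. It proposes to verify cyclotomicity directly on two classes of generators (quantum doubles and modular categories coming from minimal model CFTs) and then to propagate the conclusion to all modular categories by showing that the Witt group of modular categories modulo doubles is generated by minimal models in the sense of~\cite{DMNO}. Your approach is intrinsic and variety-theoretic; theirs is structural, factoring the problem through Witt-group generation. Each has an unresolved input: yours needs the vanishing of a twisted $H^1$ and a separation statement for invariants, theirs needs a Witt-generation theorem that is also not known.

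There is one concrete circularity you should flag explicitly. In your first step you argue that every $\sigma\in\Gal(\bar\QQ/\QQ^{\mathrm{cyc}})$ fixes the $G$-orbit because it fixes the fusion rules, $S$, $T$, and the $R$-eigenvalue spectra, ``assuming these invariants separate isomorphism classes.'' That assumption is precisely Conjecture~\ref{ID-conjecture} of the same paper, which is itself open. So the first step of your argument is not a reduction to the second; it is a reduction of one open conjecture to the conjunction of a second open conjecture and an open descent problem. You do acknowledge that the descent step needs ``a genuinely new idea,'' but the separation assumption deserves equal billing as an unproven hypothesis. A weaker sufficient condition you could try to isolate is that the intrinsic data separates $G$-orbits \emph{within a fixed Galois orbit of modular systems}, which is formally weaker than full completeness of the intrinsic data; whether that suffices for the orbit to be stabilized by $\Gal(\bar\QQ/\QQ^{\mathrm{cyc}})$ is worth spelling out.

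On the descent step itself: your identification of the stabilizer with the torus of rescalings $\zeta:L\to\bar\QQ^\times$ is correct by the Ocneanu-rigidity discussion in \S\ref{sec:fusion-and-modular-varieties}. But Hilbert~90 only kills $H^1$ for a \emph{split} torus, and the $\Gamma/H$-action on the stabilizer is twisted by the $\Aut(R)$-action permuting simple objects (Galois conjugation can permute the labels in $L$). So you should expect a nontrivial $H^1$ of a non-split torus, and the Morrison--Snyder examples you cite are a genuine warning that the obstruction can be nonzero in the fusion setting. Isolating exactly which feature of modularity would force triviality --- the congruence property of the $\mathrm{SL}(2,\ZZ)$-representation is the most promising candidate --- is the right direction, but as written it is a program, not a proof.
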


One possible approach to proving the above conjecture is as follows.  First, show that all quantum double modular categories and all modular categories from minimal model CFTs have cyclotomic defining number fields.  Then prove that every modular category is Witt-equivalent to a minimal model CFT modular category, i.e., the Witt-group of modular categories up to quantum doubles is generated by CFT minimal models \cite{DMNO}.

Given a set of fusion rules, our numerical definition of a modular category - Definition \ref{def:modular} of a modular system - reduces the classification problem to that of solving polynomial equations.  There exist several software packages to perform such a task, but so far each runs into difficulty.  Restricting solutions to an a-priori number field such as a cyclotomic field might be useful, but we are not aware of any such efforts.

\subsection{Galois Twists} 

\label{sec:Gal}

The study of Galois twisting was initiated in the context of conformal field theory in the work of Gannon and others (see \cite{Gan}).

Let $(L,N,F)$ be a fusion system defined over a Galois extension $K/\QQ$.  Every Galois automorphism $\s \in \Gal(K/\QQ)$ defines a new fusion system $(L,N,\s(F))$ over $K$, where $\s$ applies entry-wise to a matrix $M$ defined over $K$, i.e. $\s(M)_{ij}=\s(M_{ij})$.  Same is true for a modular system $(L,N,F,R,\eps)$ - we get a new modular system by applying $\s$: $(L,N,\s(F),\s(R),\eps)$.

Let $\cC$ be a fusion (modular) category over $\CC$.  Let $\cC'$ be a fusion (modular) category defined over a number field $k$ as in Proposition \ref{prop:extension}.  Let $K$ denote the Galois closure of $k$, and $\cC_K \defeq \cC' \otimes_k K$.  Then $\cC \simeq \cC_K \otimes_K \CC$.

\begin{definition}
The {\em Galois twist} of $\cC$ with respect to $\s \in \Gal(K/\QQ)$ is defined to be
\[ \cC^{\s} \defeq (\cC_K)^\s \otimes_K \CC \]
where $(\cC_K)^\s$ was defined in \S\ref{sec:se}.  By Lemma \ref{lem:ext}, $\cC^\s$ is a fusion (modular) category over $\CC$.
\end{definition}

Galois twists arise in the context of quantum group categories.  Let \g be a complex simple Lie algebra.  Let $l$ be a positive integer greater or equal to the dual Coxeter number of \g and let $q$ be a complex number such that $q^2$ is a primitive $l-$th root of unity.  A well known construction produces from this data a pre-modular category $\cC(\g,l,q)$ defined over $\CC$, in particular, $\cC(\g,l,q)$ is defined as a scalar extension of a pre-modular category defined over $k=\QQ(\sqrt[N]{q})$ for some positive integer $N$ (see~\cite{R} for full account of construction and references, see \cite{S} for a complete table of cases for which $\cC(\g,l,q)$ is modular).  

Let us restrict attention to the type $A$ case, and consider in this context fusion structure only.  Let $\g=sl_n(\CC)$ of rank $r=n-1$.  A fusion category is said to be of type $A_{r,l}$ if its Grothendieck semiring is isomorphic to the Grothendieck semiring of $\cC(\g,l,q)$.  Let $\mathcal{M}(A_{r,l})$ denote the finite set of equivalence classes of fusion categories of type $A_{r,l}$.

For any $n-$th root of unity $\tau$, Kazhdan-Wenzl define a notion of $\tau$-twisting of the associator $\a$ of a class $[\cC] \in \mathcal{M}(A_{r,l})$ via cocycles of $H^3(\ZZ/n\ZZ;\CC^\times)$  \cite{KW}.  Let $\omega : \ZZ \times \ZZ \ra \ZZ$ be given by 
\[ \omega(a,b) = \left[ \frac{a+b}{n} \right] - \left[ \frac{a}{n} \right] - \left[ \frac{b}{n} \right] \]
Define
\[ \a^\tau_{x_\l,x_\mu,x_\nu} = \tau^{\omega(|\mu|,|\nu|) \cdot  |\nu|} \a_{x_\l,x_\mu,x_\nu} \]
where $x_\l,x_\mu,x_\nu \in \cC$ are simple objects labelled by elements $\l,\mu,\nu$ in the fundamental alcove $\Lambda_{n,l} = \{(m_1,\ldots,m_r) \in \ZZ^r\,|\,l-n \geq m_1 \geq \cdots \geq m_r \geq 0 \}$ of size $|(m_1,\ldots,m_r)| := \sum_{i=1}^r m_i$.  Since $\tau^{\omega(-,-) \cdot -}$ is a 3-cocycle, $\a^\tau$ satisfies the pentagon \eqref{eqn:pent_diag}, thereby defining a (possibly new) class $[\cC^\tau] \in \mathcal{M}(A_{r,l})$.  

As an example, a $\tau$-twisting of $SU(2)_2$ with $\tau = -1$ results in the Ising theory.  In the above notation, $SU(2)_2$ theory fits the case $n=2$, $l=4$, $q = e^{\pi i/4}$.  It has three simple objects $\{x_\l\}$, $\l \in \Lambda_{2,4} = \{0,1,2\}$, with $x_0 = \mathbf{1}$.  Its fusion rules are given by $x_1^2 = \mathbf{1} + x_2$, $x_1 x_2 = x_2 x_1 = x_1$ and $x_2^2=\mathbf{1}$.  All non-vanishing fusion spaces are one dimensional and the $F$-matrices of interest are given by 
\[ F_{x_1 x_1 x_1}^{x_1} = -\frac{1}{\sqrt{2}} \begin{pmatrix} 1& 1 \\ 1 & -1 \end{pmatrix} 
\quad , \quad F_{x_2 x_1 x_2}^{x_1} = -1 \quad , \quad F_{x_1 x_2 x_1}^{x_2} = -1 \]
Then $\tau$-twisting by $\tau=-1$ results in a single switch of sign
\[ \a^{\tau}_{x_1,x_1,x_1} = - \a_{x_1,x_1,x_1} \]
which switches the sign of the single $2 \times 2$ $F$-matrix $(F^\tau)_{x_1 x_1 x_1}^{x_1} = - F_{x_1 x_1 x_1}^{x_1}$.  The resulting theory is the Ising theory (as discussed in \cite{RSW}).

\begin{obv}
For an appropriate Galois extension $K/\QQ$, the combined action of $\tau$-twisting and $\s$-twisting in $\s \in \Gal(K/\QQ)$ is transitive in $\mathcal{M}(A_{r,l})$.
\end{obv}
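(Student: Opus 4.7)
My plan is to reduce the observation to the Kazhdan--Wenzl classification theorem and then track how the two twisting operations act on its parameters. Kazhdan--Wenzl show \cite{KW} that every equivalence class in $\mathcal{M}(A_{r,l})$ is represented by $\cC(sl_n,l,q)^\tau$ for some primitive $2l$-th root of unity $q$ and some $n$-th root of unity $\tau$. Thus it suffices to show that, starting from a fixed reference class $[\cC(sl_n,l,q_0)]$, one can reach every pair $(q,\tau)$ by first applying a Galois twist to move $q_0$ to $q$, and then applying a $\tau$-twist.

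Next, I would choose the Galois extension. Each quantum group category $\cC(sl_n,l,q)$ admits, by the construction reviewed in \S\ref{sec:se} and Proposition \ref{prop:modular}, a fusion system $(L,N,F(q))$ whose $F$-matrix entries are given by explicit quantum $6j$-symbols, which are rational functions of $q$ with coefficients in $\QQ$ (up to possibly adjoining an $N$-th root of $q$ to absorb square roots of the $u_a$). Taking $K$ to be the cyclotomic field $\QQ(\zeta_{2lN})$ for such an $N$, all the fusion systems $(L,N,F(q))$, as $q$ ranges over primitive $2l$-th roots of unity, are defined over $K$. Now the Galois group $\Gal(\QQ(\zeta_{2l})/\QQ) \cong (\ZZ/2l\ZZ)^\times$ acts transitively on primitive $2l$-th roots of unity, and for any $\sigma \in \Gal(K/\QQ)$ extending $\zeta_{2l} \mapsto \zeta_{2l}^k$, applying $\sigma$ entry-wise to $F(q)$ produces $F(\sigma(q))$ in the same gauge, simply because the $6j$-symbols are polynomial/rational in $q$ with rational coefficients. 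By Lemma~\ref{lem:ext}, this identifies $\cC(sl_n,l,q)^\sigma \simeq \cC(sl_n,l,\sigma(q))$ as fusion categories over $\CC$.

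Combining these two ingredients yields the observation: given $[\cC] \in \mathcal{M}(A_{r,l})$, write $\cC \simeq \cC(sl_n,l,q)^\tau$; choose $\sigma \in \Gal(K/\QQ)$ with $\sigma(q_0)=q$; then $\cC(sl_n,l,q_0)^\sigma \simeq \cC(sl_n,l,q)$, and a further $\tau$-twist (which acts by a cocycle on $\alpha$ and is entirely independent of the base field) produces $\cC$. The main technical hurdle is the claim that Galois conjugation of $F(q)$ genuinely returns (a gauge equivalent of) $F(\sigma(q))$. This is clear when one works with a ``universal'' gauge in which the $6j$-symbols are manifestly rational in $q$, but in an arbitrary gauge one must absorb the ambiguity using the $G$-action of \S\ref{sec:fusion-and-modular-varieties}; the finiteness of $\mathcal{M}(A_{r,l})$ and Ocneanu rigidity ensure that only finitely many such gauge adjustments can arise, so enlarging $K$ by the corresponding finite extension is harmless.
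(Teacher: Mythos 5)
Your proposal is correct and follows essentially the same route as the paper: both reduce the observation to Kazhdan--Wenzl's Theorem $\mathbf{A_l}$, which says every class in $\mathcal{M}(A_{r,l})$ is of the form $[\cC(\g,l,q)^\tau]$, and then identify changing $q$ with a Galois twist. The paper gives only this one-line reduction (citing \cite{FK} for $A_{1,l}$ and \cite{KW} for general rank), whereas you helpfully flesh out why $\sigma$-conjugation of $F(q)$ produces $F(\sigma(q))$ up to gauge, but the underlying argument is the same.
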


The case of $A_{1,l}$ was proved in \cite{FK}.  For a general rank $r$, Theorem $\mathbf{A_l}$ in \cite{KW} states that all classes in $\mathcal{M}(A_{r,l})$ are of the form $[\cC(\g,l,q)^\tau]$ for some primitive $l-$th root of unity $q^2$, and some $n-$th root of unity $\tau$.  In other words, all monoidal equivalence classes of fusion categories with Grothendeick semiring $K_0\,\cC(\g,q,l)$ are $\tau$-twists and / or Galois twists of a single $\cC(\g,l,q)$.

However, Galois twisting alone does not suffice for transitivity.  The Toric code modular category can be realized as the quantum double of $\ZZ_2$ or the quantum group modular category $SO(16)_1$.  It has $\QQ$ as its defining number field \cite{RSW}.  There is another modular category $SO(8)_1$ with the same fusion rules and $\QQ$ as its defining number field.  Therefore, they belong to different Galois orbits for the same fusion rules given by the abelian group $\ZZ_2 \times \ZZ_2$ (the Galois group is trivial!).  Furthermore, $SU(2)_2$ and the Ising theory belong to different Galois orbits because they have different Frobenius-Schur indicators. 

The Fibonacci category plays a prominent role in topological quantum computing \cite{Wangbook}.  It is a unitary modular category with two labels denoted $\{\mathbf{1},x\}$, and only one non-trivial fusion rule $x^2=\mathbf{1} + x$ (see example \ref{ex:Fib} on page \pageref{ex:Fib}).  There are only two non-trivial $F$-matrices for the Fibonacci theory denoted
\[ F_{xxx}^{\mathbf{1}} = z \quad , \quad
F_{xxx}^x = \begin{pmatrix} z_{11}& z_{12} \\ z_{21}& z_{22} \end{pmatrix} \]
which satisfy
\[
\begin{pmatrix}
1& 0\\
0& z^2
 \end{pmatrix}
 =F_{xxx}^x \begin{pmatrix}
1& 0\\
0& z
 \end{pmatrix}
F_{xxx}^x,\]
\[
\begin{pmatrix}
1& 0\\
0& F_{xxx}^x
 \end{pmatrix}
\begin{pmatrix}
0&1& 0\\
1&0& 0\\
0&0&1
 \end{pmatrix}
\begin{pmatrix}
1& 0\\
0& F_{xxx}^x
 \end{pmatrix}
 =
 \begin{pmatrix}
z_{11}&0& z_{12}\\
0&z& 0\\
z_{21}&0&z_{22}
 \end{pmatrix}
\begin{pmatrix}
1& 0\\
0& F_{xxx}^x
 \end{pmatrix}
  \begin{pmatrix}
z_{11}&0& z_{12}\\
0&z& 0\\
z_{21}&0&z_{22}
 \end{pmatrix}
 .\]
A family of solutions, up to gauge freedom, is given by
\[ z=1 \ ,\ z_{11}=d-1 \ ,\  z_{12}=d+1 \ ,\  z_{21}=2d-3 \ ,\  z_{22}=1-d\] 
where $d$ satisfies $d^2=1+d$.  When $d=\phi=\frac{\sqrt{5}+1}{2}$, the golden ratio, the solutions lead to the unitary Fibonacci theory with a defining number field $\QQ(\zeta_{20}), \zeta_{20}=e^{{2\pi i}/{20}}$ \cite{FW}.  When $d=1-\phi$, the solutions lead to the non-unitary Yang-Lee theory, which also has a defining number field  $\QQ(\zeta_{20})$ \cite{Wangbook}. The Galois twist of the Fibonacci theory has an orbit of size $4$: Fibonacci theory and its complex conjugate, Yang-Lee theory and its complex conjugate.  Since the Yang-Lee theory is non-unitary, this example demonstrates that Galois twisting does not preserve unitarity.

When a modular category is unitary there is a choice of gauge making all $F$-matrices unitary, which, in applications to physics, is often required.  However, there is an incompatibility between making $F$ unitary and choosing $6j$ symbols inside a cyclotomic field as shown in \cite{FW}.  As we see above, there is a choice of a cyclotomic defining number field $\QQ(\zeta_{20})$ for the Fibonacci theory.  In \cite{FW}, we show that in order to have a real unitary $F$-matrix, the defining number field for Fibonacci theory has to be a Galois field with non-abelian Galois group.  One choice is $\QQ(\sqrt{\phi},\zeta_{20})$ with Galois group the dihedral group $D_4$ of $8$ elements.   

\begin{definition}
\begin{mylist}
\item Given a modular category $\cC$, the intrinsic data of $\cC$, denoted as $\textrm{ID}(\cC)$, consist of the fusion rules $N_{ij}^k$, the entries of the $S$-matrix, the $T$-matrix, the eigenvalues of the $R$-matrices, and the Frobenius-Schur indicators. 
\item The rational part of the $\textrm{ID}(\cC)$ is the set $\QQ\cap \textrm{ID}(\cC)$.   
\end{mylist}
\end{definition}

\begin{con}
\label{ID-conjecture}
The $\textrm{ID}(\cC)$ of a modular category is the \textrm{ID} of a modular category in the sense that it is a complete invariant of modular categories, i.e., $\textrm{ID}(\cC)$ determines a modular category.
\end{con}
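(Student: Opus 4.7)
The plan is to show that two modular categories $\cC, \cC'$ over $\CC$ with $\textrm{ID}(\cC) = \textrm{ID}(\cC')$ must be equivalent. I would first translate the problem into the numerical framework of Sections 3--4. Equality of fusion rules $N_{ij}^k$ places the associated modular systems in a common modular variety $X(L,N,\eps)$; the sign vector $\eps$ is read off from the degree-two Frobenius--Schur indicators of simple self-dual objects, so both systems lie over the same $\eps$. By the discussion in \S\ref{sec:fusion-and-modular-varieties}, equivalence classes of modular categories with underlying based ring $R=R(L,N)$ are in bijection with orbits of $\Aut(R) \ltimes G$ on $X(L,N,\eps)$, and by Ocneanu rigidity only finitely many such orbits exist. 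The task is thus to show that ID separates these finitely many orbits.

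Next I would verify that each piece of ID descends to a well-defined polynomial gauge invariant on $X(L,N,\eps)/(\Aut(R)\ltimes G)$. Formula \eqref{eq:modularity} exhibits $\hat S$ (hence $S = D\hat S D$) as a polynomial in $F, R$; twist eigenvalues $\theta_a$ entering $T$ arise from $R_{aa}^c$ via the balancing--pivotal relation $\theta = \psi \circ \eps^{-1}$ of \S\ref{ss:balance}; eigenvalues of each $R_{ab}^c$ are roots of characteristic polynomials invariant under the residual diagonal $G$-action; and higher Frobenius--Schur indicators (in the sense of Ng--Schauenburg) are likewise polynomials in $F, R, \eps$. Assembling these ingredients produces a gauge-invariant map $\Phi$ from the orbit space to a product of affine lines, and the conjecture becomes injectivity of $\Phi$.

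The heart of the argument is injectivity of $\Phi$. I would proceed in two layers. First, exploit the Galois orbit structure of \S\ref{sec:Gal}: since $(S,T)$ lies in a cyclotomic field by Coste--Gannon and since Galois twists act compatibly on modular data, two orbits with coincident $(S,T)$ must be linked by some $\sigma \in \Gal(K/\QQ)$; the $R$-matrix eigenvalues, which are Galois-equivariant but not Galois-invariant, should then identify the specific representative. Second, for orbits sharing $(S,T)$ \emph{and} all $R$-eigenvalues, I would invoke the higher Frobenius--Schur indicators to complete the separation; the target is the statement that the tuple $(N,\,S,\,T,\,\{R\text{-eigenvalues}\},\,\{\nu_n\})$ is a complete invariant of the orbit.

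The main obstacle is this last layer. Mignard--Schauenburg have exhibited inequivalent modular categories arising as twisted doubles $D^{\omega}(G)$ of finite groups sharing identical $(S,T)$ data, and it is genuinely unclear whether $R$-eigenvalues combined with higher FS indicators suffice to distinguish every such pair. I would therefore first attack the conjecture empirically on these families: if explicit computation shows $R$-eigenvalues detect the known discrepancies, this both corroborates the conjecture and suggests the structural argument (probably via the $\SL(2,\ZZ)$-representation carried by the twist/$S$-action together with the braided autoequivalence group captured by $R$). If $R$-eigenvalues and higher indicators fail on a Mignard--Schauenburg pair, the conjecture must be refined by enlarging $\textrm{ID}(\cC)$ --- plausible enlargements include the higher-genus modular functor data or the module-categorical invariants of $Z(\cC)$ --- and the proof strategy reorganized accordingly.
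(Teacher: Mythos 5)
This statement is a \emph{conjecture} (Conjecture 4.13) in the paper; the authors offer no proof, and indeed no proof is currently known. So there is no ``paper's own proof'' to compare your proposal against, and your text should not be read as a completed argument --- it is an attack plan, and you yourself flag the unresolved gap at its core.

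That said, the outline is sensible as far as it goes, and your self-diagnosis of the obstacle is exactly right. The reduction to showing that $\textrm{ID}$ separates the finitely many $\Aut(R)\ltimes G$-orbits in $X(L,N,\eps)$ is the natural first step and matches the framework of Section 4; checking that each piece of $\textrm{ID}$ is a polynomial gauge invariant (hence a regular function on the orbit space) is likewise correct and routine given equations such as \eqref{eq:modularity}. The serious difficulty is precisely what you name: the Mignard--Schauenburg families $D^{\omega}(G)$ produce inequivalent modular categories with identical modular data $(S,T)$, so the part of $\textrm{ID}$ beyond $(S,T)$ --- the $R$-matrix eigenvalues and the Frobenius--Schur indicators --- has to do all the separating work, and nothing in the paper's machinery supplies an argument that it does. (Those counterexamples postdate this paper; at the time of writing the weaker folklore statement that $(S,T)$ is complete was still open, which is part of why the authors enlarged the invariant to $\textrm{ID}$ and left the question as a conjecture.) Your proposed fallback --- compute on the known $(S,T)$-degenerate pairs, and if $\textrm{ID}$ fails to separate them, enlarge the invariant --- is the right methodology, but it does not yet constitute a proof and should not be presented as one. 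In short: correct framing, correct identification of the crux, but the crux is genuinely open, which is why the paper states this as a conjecture rather than a proposition.
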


\begin{obv}
\label{Q-ID}
Let $\cC^{\s}$ be the Galois twist of a modular category $\cC$ with respect to $(\s, K_\cC)$.  Then 
$\QQ\cap \textrm{ID}(\cC)=\QQ\cap \textrm{ID}(\cC^{\s})$, i.e., the rational part of the intrinsic data of a modular category is invariant under Galois twists.
\end{obv}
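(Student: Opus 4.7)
The strategy is to establish the single equivariance principle: for every datum $x \in \textrm{ID}(\cC)$, the corresponding datum of the twist satisfies $x(\cC^{\s}) = \s(x(\cC))$. Once this is in place, the observation is immediate: if $x \in \QQ \cap \textrm{ID}(\cC)$ then $\s(x) = x$ since $\s \in \Gal(K_\cC/\QQ)$ fixes $\QQ$ pointwise, so $x \in \QQ \cap \textrm{ID}(\cC^{\s})$; applying $\s^{-1}$ (also a Galois automorphism) yields the reverse inclusion. So the entire task reduces to checking equivariance component by component.

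First, the fusion rules: by Lemma \ref{lem:ext}, scalar extension along $\s$ preserves the decomposition of tensor products and the $K_\cC$-dimensions of morphism spaces, so $N_{ij}^k(\cC^{\s}) = N_{ij}^k(\cC)$, which is automatically rational. Next, the $S$-matrix: formula \eqref{eq:modularity} expresses $\hat{S}_{ab}$ as a polynomial in the entries of $F$, $G = F^{-1}$ and $R$. By construction of $\cC^{\s}$ each such entry is transformed entry-wise by $\s$, and the quantum dimensions $d_a = \eps_a/(\l_a \l_{a^*})$ from \eqref{eq:qdim} transform equivariantly as well (the pivotal coefficients $\eps_a = \pm 1$ being Galois-fixed); hence $S(\cC^{\s}) = \s(S(\cC))$. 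The $T$-matrix entries $\theta_a$ are given through the balancing formula $\t = \psi \circ \eps^{-1}$ of \eqref{def:balancing}, with $\psi$ built from braiding, duality and pivotal data \eqref{def:psi}; these are again polynomial expressions in $F$, $R$, and the $\eps_a$, so $T(\cC^{\s}) = \s(T(\cC))$. The same reasoning applies to the eigenvalues of each $R_{ab}^c$, which are eigenvalues of matrices whose entries are transformed by $\s$.

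For Frobenius--Schur indicators one invokes the Bantay / Ng--Schauenburg formula
\[
\nu_n(a) \;=\; \frac{1}{D^2}\sum_{i,j \in L} N_{ij}^{a}\, d_i\, d_j\, \bigl(\theta_i \theta_j^{-1}\bigr)^n,
\]
expressing $\nu_n(a)$ as a polynomial in quantities already shown to transform $\s$-equivariantly, together with $1/D^2$ for the global dimension $D^2 = \sum_{i \in L} d_i^2$. Since $\s$ is a field automorphism it commutes with sums, products and inversions, and $D^2 \neq 0$ by modularity; hence $\s(1/D^2) = 1/\s(D^2)$, and $\nu_n(\cC^{\s}) = \s(\nu_n(\cC))$.

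The main obstacle is the Frobenius--Schur step: one must either commit to the Ng--Schauenburg formula above (established in full modular generality) or, alternatively, return to the intrinsic categorical definition of $\nu_n(a)$ via a trace of an endomorphism of $\cC(\mathbf{1}, a^{\otimes n})$ and verify that under scalar extension by $\s$ the defining evaluation, coevaluation and rotation morphisms transform equivariantly. Either route requires tracking that the normalization constant $1/D^2$ is well-defined and Galois-equivariant, a point that is straightforward but worth stating explicitly. Once this final step is handled, the observation follows from the equivariance principle articulated above.
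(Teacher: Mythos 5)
Your proof is correct and takes essentially the approach the paper intends (the paper itself states this as an Observation without a written proof, relying on the same equivariance principle you articulate). The argument reduces to the fact, already noted in the proof of Lemma~\ref{lem:ext}, that scalar extension along $\s$ acts entrywise on the numerical data, so that every item of $\textrm{ID}$ is sent to its $\s$-image, while $\s$ fixes $\QQ$ pointwise; applying $\s^{-1}$ closes the other inclusion. Your care about the Frobenius--Schur indicators is well placed but not a gap: either the Ng--Schauenburg formula or the trace definition on $\cC(\mathbf{1},a^{\otimes n})$ gives the needed $\s$-equivariance, and the nonvanishing of the global dimension $D^2$ (already invoked in the modularity argument of Proposition~\ref{prop:modular}) guarantees the normalization behaves correctly.
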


\begin{con}
Given a set of fusion rules, the rational parts of the intrinsic data from all modular categories with the same fusion rules are in one-one correspondence to the Galois orbits.
\end{con}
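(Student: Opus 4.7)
The plan is to establish the non-trivial direction of the claimed bijection: assuming two modular categories $\cC_1,\cC_2$ over $\CC$ have identical fusion rules $(L,N)$ and rational intrinsic data $\QQ\cap\mathrm{ID}(\cC_1)=\QQ\cap\mathrm{ID}(\cC_2)$, exhibit a number field $K$ and $\sigma\in\Gal(K/\QQ)$ with $\cC_1^{\sigma}\simeq\cC_2$. The converse is Observation \ref{Q-ID}. I would work inside the modular variety $X(L,N,\eps)$ of Section \ref{sec:fusion-and-modular-varieties}: by Ocneanu rigidity only finitely many $G$-orbits correspond to equivalence classes of modular categories with fusion data $(L,N)$, and by Lemma \ref{lemma:algebraic_point} each orbit contains an algebraic representative, so after enlarging $K$ to contain defining fields for all such categories, the finite group $\Gal(K/\QQ)$ acts on this finite set of equivalence classes via coordinate-wise action on points of $X(L,N,\eps)$.

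The key reduction is Conjecture \ref{ID-conjecture}: under this assumption, isomorphism of modular categories is equivalent to equality of intrinsic data, and since $\mathrm{ID}(\cC^{\sigma})=\sigma(\mathrm{ID}(\cC))$ under coefficient-wise Galois action, the problem reduces to producing $\sigma\in\Gal(K/\QQ)$ with $\sigma(\mathrm{ID}(\cC_1))=\mathrm{ID}(\cC_2)$. Invoking Conjecture \ref{con:cyclotomic} I would replace $K$ by a cyclotomic field, making $\Gal(K/\QQ)$ abelian, and then deploy the classical Coste-Gannon Galois symmetry: each $\sigma$ induces a permutation $\hat\sigma$ on $L$ and signs $\epsilon_\sigma(a)\in\{\pm 1\}$ with $\sigma(S_{a,b})=\epsilon_\sigma(a)S_{\hat\sigma(a),b}$, together with a compatible action on $T$ coming from the Ng-Schauenburg congruence property of the modular $\mathrm{SL}(2,\ZZ)$-representation. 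From this combinatorial description of the Galois action on $(S,T)$ one forces the same equivariance on Frobenius-Schur indicators via the Bantay formula expressing them in terms of $(N,S,T)$, and on the $R$-eigenvalues via the balancing relation linking them to entries of $T$. The crux is then a matching argument: identify the unique signed permutation $(\hat\sigma,\epsilon_\sigma)$ that transports $\mathrm{ID}(\cC_1)$ onto $\mathrm{ID}(\cC_2)$, using that the two tuples agree on every rational coordinate and that non-rational coordinates are partitioned into Galois orbits determined by this same signed permutation.

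The main obstacle is twofold. First, Conjecture \ref{ID-conjecture} is open in full generality, and without it an ID-level match between $\cC_1^\sigma$ and $\cC_2$ does not yield a categorical equivalence. Second, the Toric Code versus $SO(8)_1$ example from Section \ref{sec:Gal} shows two modular categories with identical fusion rules and identical $(S,T)$-data that are distinguished only by Frobenius-Schur indicators and lie in distinct singleton Galois orbits; this demonstrates that separation of Galois orbits by rational ID can hinge entirely on the finer part of the data, so any proof must treat FS indicators and $R$-eigenvalues on equal footing with $(S,T)$ rather than as derived invariants. Bridging these gaps likely requires either an independent proof of Conjecture \ref{ID-conjecture}, or a direct argument on $X(L,N,\eps)$ showing that the combined $(G\rtimes\Gal(K/\QQ))$-action has orbits refined enough to be distinguished by the $\QQ$-valued functionals on ID alone.
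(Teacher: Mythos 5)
This statement is labeled a \emph{Conjecture} in the paper; the paper offers no proof of it, so there is no argument of the authors to compare your sketch against. What you have written is a conditional strategy, and you are candid that it cannot be completed with current technology: the reduction to Conjecture~\ref{ID-conjecture} (that $\textrm{ID}(\cC)$ is a complete invariant) is the only way to convert equality of intrinsic data into categorical equivalence, and passing to a cyclotomic $K$ so that the Coste--Gannon signed-permutation picture applies rests on Conjecture~\ref{con:cyclotomic}. Both of those are open, so your plan proves nothing unconditionally; it establishes a conditional implication, which is a reasonable thing to record for an open conjecture but should not be presented as a proof.

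One factual slip worth flagging: you say the Toric Code versus $SO(8)_1$ pair has ``identical $(S,T)$-data'' and is ``distinguished only by Frobenius--Schur indicators.'' The paper attributes the Frobenius--Schur distinction to the $SU(2)_2$ versus Ising pair, not to Toric Code versus $SO(8)_1$. For the latter pair the paper does not say the $(S,T)$-data agree; in fact they differ (the central charges mod $8$ differ, so the $T$-matrices differ), which is consistent with the conjecture rather than a threat to it. Your underlying point --- that any proof must treat Frobenius--Schur indicators and $R$-eigenvalues as primary invariants, not ones derivable from $(S,T)$ --- is still well taken, but the example you cite for it is the wrong one.

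A structural remark: the ``one-to-one correspondence'' in the conjecture has two halves. The direction ``same Galois orbit $\Rightarrow$ same rational ID'' is Observation~\ref{Q-ID} and is essentially immediate. The hard half, which your sketch addresses, is injectivity of the map from Galois orbits to rational IDs. Since, by Ocneanu rigidity and Lemma~\ref{lemma:algebraic_point}, there are only finitely many orbits, what one actually needs is a finite separating family of $\QQ$-valued functions on $X(L,N,\eps)/(\Aut(R)\ltimes G)$ that depend only on intrinsic data. Framing it that way --- as an invariant-theoretic separation problem on the modular variety modulo gauge --- may be more tractable than routing everything through the $\mathrm{SL}(2,\ZZ)$ Galois symmetry, precisely because it puts $R$-eigenvalues and indicators on the same footing as $(S,T)$ from the outset rather than deriving them afterwards.
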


\subsection{Exotic modular categories}

Most of the known modular categories are related to the quantum group construction.  Exotic unitary fusion categories arise in the context of subfactor theory \cite{MoS}.  Taking the Drinfeld center, or quantum double, of those exotic fusion categories results in new unitary modular categories which do not resemble those constructed from quantum groups.  

It is a folklore conjecture that all unitary modular categories can be generated from quantum groups \cite{HRW}.  Loosely, we would like to call any unitary modular category that cannot be constructed from quantum group categories an exotic modular category.  But it is very difficult to mathematically characterize all modular categories from quantum group constructions, and thus define exoticism.  

Our arithmetic definition of modular categories provides one way to define exoticism.  Since all quantum group modular categories seem to have cyclotomic defining number fields, we will consider a modular category exotic if it does not have a cyclotomic defining number field.

Computable numbers are defined by Turing in his seminal paper \cite{T}.  It has been shown that computable numbers and polynomial-time computable numbers respectively form algebraically closed fields \cite{Ri}\cite{Ma}. 

\begin{definition}
\begin{mylist}
\item A modular category $\cC$ is cyclotmically exotic if $\cC$ cannot be presented within any cyclotomic field. 
\item A modular category $\cC$ is Turing poly-exotic if $\cC$ does not have an algebraic defining number field within polynomial time computable numbers.
\end{mylist} 
\end{definition}

Conjecture \ref{con:cyclotomic} amounts to there being no cyclotomically exotic modular categories.

Mathematics and physics study two different worlds: a man-created logic world, and a man-inhabited causal world.  There are plenty of mathematical tasks that are not Turing computable such as the classification of $4$-manifolds.  Since physics seems to be simulatable efficiently by computing devices, we believe there should be no Turing poly-exotic unitary modular categories.    

\section{Acknowledgements}  

The first author would like to thank D.\ Freed for inspiring and supporting her through this project.  The authors would like to thank T.\ Gannon, and R.\ Ng for their interests in this work.  The third author is partially supported by NSF DMS 1108736.

\appendix

\section{Gauge equivalence of fusion systems over hyperrings}\label{appendix}
\begin{center} Tobias Hagge$^2$ and Matthew Titsworth$^2$\end{center}

The Grothendieck ring $R$ of a fusion category is a hyperring if $N_{a b}^c \in \{0,1\}$ for all $a,b,c \in L$. In this case, equation~\eqref{gauge-change} reduces to the following:
\begin{equation}\label{gauge-change-hyperring}
(F^g)_{abc}^u \left[ \scalebox{.6}{$\begin{array}{ccc} 1 & d & 1 \\ 1 & e & 1 \end{array}$} \right] \defeq
(g_{ab}^{d}) (g_{dc}^u) F_{abc}^u \left[ \scalebox{.6}{$\begin{array}{ccc} 1 & d & 1 \\ 1 & e & 1 \end{array}$} \right] (g_u^{ae}) (g_e^{bc})
\end{equation}

In this case, $(F^g)_{abc}^u \left[ \scalebox{.6}{$\begin{array}{ccc} 1 & d & 1 \\ 1 & e & 1 \end{array}$} \right]= 0$ iff $F_{abc}^u \left[ \scalebox{.6}{$\begin{array}{ccc} 1 & d & 1 \\ 1 & e & 1 \end{array}$} \right]=0$.

Given $(a,b,c) \in L$, let $t_{ab}^c := (a,b,c)$, and let $T =\{(a,b,u) \in L \times L \times L | N_{a b}^u = 1\}$. If $x$ is a set, let $F(x)$ denote the free abelian group with basis $x$, with the group operation expressed multiplicatively. Let $Q \subset F(T)$ such that
\[Q = \{t_{a b}^d t_{d c}^u(t_{b c}^e t_{a e}^u)^{-1} | \{t_{a b}^d,t_{d c}^u,t_{b c}^e,t_{a e}^u\} \subset T\}.\]
Given fusion system $F \in X(L,R)$ and $q \in Q$, $q = t_{a b}^d t_{e c}^u(t_{b c}^e t_{a e}^u)^{-1}$ for some  $t_{a b}^d, t_{e c}^u,t_{b c}^e, t_{a e}^u \in T$, let
\[F_q = F_{abc}^u \left[ \scalebox{.6}{$\begin{array}{ccc} 1 & d & 1 \\ 1 & e & 1 \end{array}$} \right],\]
and let
\[\hat Q = \{q \in Q | F_q \ne 0\}.\]

Section \ref{sec:fusion-and-modular-varieties} shows that two fusion systems $F$ and $F'$ are gauge equivalent (write $F \sim F'$) iff, using the above definitions, there exists a function $s:T \to k^\times$, $s(t_{a b}^c) = s_{a b}^c$, such that for all $\{t_{a b}^c,t_{d c}^u,t_{b c}^e,t_{a e}^u\} \subset T$ we have
\begin{equation}\label{gauge-equivalence-s}
s_{a b}^d s_{d c}^u(s_{b c}^e s_{a e}^u)^{-1} F_{abc}^u \left[ \scalebox{.6}{$\begin{array}{ccc} 1 & d & 1 \\ 1 & e & 1 \end{array}$} \right] = (F'_{abc})^u \left[ \scalebox{.6}{$\begin{array}{ccc} 1 & d & 1 \\ 1 & e & 1 \end{array}$} \right].
\end{equation}

Given such a function $s$, $s$ extends to a group homomorphism $F(T) \to k^\times$, which then restricts to a group homomorphism $\phi_Q:\langle Q \rangle \to k^\times$ satisfying the following equation for each $q \in Q$:

\begin{equation}\label{somemap}
\phi_Q(q) F_q = F'_q,
\end{equation}

Suppose instead that $\phi_{\hat Q}:\langle \hat Q \rangle \to k^\times$ is a group homomorphism which satisfies equation (\ref{somemap}) for every $q \in \hat Q$. Since $k$ has characteristic 0, $k^\times$ is an injective $\mathbb Z$-module. Thus $\phi_{\hat Q}$ extends to a well-defined homomorphism $\phi_{Q}:\langle Q \rangle \to k^\times$, and again to $\phi_T:F(T) \to k^\times$ such that with $s_{a b}^c := \phi_T(t_{a b}^c)$, equations (\ref{gauge-equivalence-s}) hold.

Thus $F \sim F'$ iff $F$ and $F'$ have simultaneous zeros and the map $\psi:\hat Q \to k^\times$ such that $\psi(q) = \frac{F'_q}{F_q}$ extends to a homomorphism $\langle \hat Q \rangle \to k^\times$.

Since $\langle \hat Q \rangle \cong F(\hat Q) / S$, where $S \unlhd F(\hat Q)\unlhd F(Q)$ is the (free abelian) subgroup of words in $\hat Q$ trivial in $F(T)$, there is a basis $\{s_1, \ldots s_m\}$ for $S$, where each $s_i = q_{i,1}^{k_{i,1}} \ldots q_{i,n_i}^{k_{i,n_i}}$, with $q_{i,j} \in Q$ and $k_{i,j} \in \mathbb Z \backslash \{0\}$. Then $F \sim F'$ iff for each $s_i \in S$, 
\[\tilde \phi(s_i) =\left(\frac{F'_{q_{i,1}}}{F_{q_{i,1}}}\right)^{k_{i,1}}\ldots \left(\frac{F'_{q_{i,n_i}}}{F_{q_{i,n_i}}}\right)^{k_{i,n_i}} = 1.\]
Equivalently, $F \sim F'$ iff for all $s_i \in S$, 
\begin{equation}\label{fsymbols-equal}
(F_{q_{i,1}})^{k_{i,1}}\ldots (F_{q_{i,n_i}})^{k_{i,n_i}} = (F'_{q_{i,1}})^{k_{i,1}}\ldots (F'_{q_{i,n_i}})^{k_{i,n_i}}.
\end{equation}

\bibliographystyle{ams-alpha}

\begin{thebibliography}{ABC}

\bibitem[BK]{BK}B. Bakalov and A. Kirillov, Jr., 
\textit{Lectures on Tensor Categories and Modular Functors}, 
University Lecture Series \textbf{21},  Amer.\ Math.\ Soc., 2001.

\bibitem[BNRW]{BNRW} P.~Bruillard, S.~ Ng, E.~ Rowell, and Z.~ Wang, 
\textit{On modular categories}, 
in preparation.

\bibitem[CE]{CE} D.~Calaque, P.~Etingof,
\textit{Lectures on tensor categories} 
in \textit{Quantum groups}, pp. 1--38, IRMA Lect. Math. Theor. Phys. \textbf{12}, Eur. Math. Soc., 2008.
\bibitem[CG]{CG} A.~Coste, T.~Gannon,
\textit{Remarks on Galois symmetry in
rational conformal field theories}, 
Phys. Lett. B \textbf{323} 3-4 (1994), pp. 316--321.

\bibitem[dBG]{dBG} J.~de~Boere, J.~Goeree,
\textit{Markov traces and $II_1$ factors in conformal field theory}, 
Comm. Math. Phys. \textbf{139} 2 (1991), pp. 26--304.

\bibitem[DMNO]{DMNO}A.\ Davydov, M.\ Mueger, D.\ Nikshych, V.\ Ostrik, 
\textit{The Witt group of non-degenerate braided fusion categories},  
J. reine angew. Math. (2012),
\href{http://arxiv.org/abs/1009.2117}{arXiv:1009.2117}.

\bibitem[ENO]{ENO}P. Etingof, D. Nikshych, and V. Ostrik,  
\textit{On fusion categories}, Ann. of Math.\ \textbf{162} 2 (2005), pp. 581--642, 
\href{http://arxiv.org/abs/math/0203060}{arXiv:math/0203060}.

\bibitem[MGHTTW]{GaloisConjugate} M.~Freedman, J.~Gukelberger, M.~B.~Hastings, S.~Trebst, M.~Troyer, Z.~Wang,
\textit{Galois conjugates of topological phases}, 
Phys. Rev. B \textbf{85} 045414 (2012),
\href{http://arxiv.org/abs/1106.3267}{arXiv:1106.3267}.

\bibitem[FK]{FK} J.~Fr\"ohlich, T.~Kerler,
\textit{Quantum Groups, Quantum Categories and Quantum Field Theory}, 
Lecture Notes in Mathematics, Volume 1542. Springer Verlag, 1993.

\bibitem[FW]{FW} M.~Freedman, Z.~Wang, 
\textit{Large quantum Fourier transforms are never exactly realized by braiding conformal blocks}, 
Phys. Rev. A \textbf{75} 032322 (2007), 
\href{http://arxiv.org/abs/cond-mat/0609411}{arXiv:cond-mat/0609411}.

\bibitem[Gal]{Galindo} C.~Galindo, 
\textit{On braided and ribbon unitary fusion categories}, 
\href{http://arxiv.org/abs/1209.2022}{arXiv:1209.2022}.
    
\bibitem[Gan]{Gan} T.~Gannon,
\textit{Modular Data: The Algebraic Combinatorics of Conformal Field Theory},
J. Algebraic Combin. \textbf{22} 2 (2005), pp. 211--250.

\bibitem[HRW]{HRW} S.-M. Hong, E. Rowell, and Z. Wang,   
\textit{On exotic modular tensor categories},
Commun. Contemp. Math. \textbf{10} 1 (2008), pp. 1049--1074, 
\href{http://arxiv.org/abs/0710.5761}{arXiv:0710.5761}.

\bibitem[Hum]{Hum} J.~E.~Humphreys,
\textit{Linear Algebraic Groups},
Graduate Texts in Mathematics, Volume 21, Springer Verlag, 1995.

\bibitem[JWB]{JWB} H.~Jiang, Z.~Wang, L.~Balents, 
\textit{Identifying topological order by entanglement entropy}, 
Nature Physics \textbf{8} (2012), pp. 902--905, 
\href{http://arxiv.org/abs/1205.4289}{arXiv:1205.4289}.

\bibitem[Ka]{Ka} C.~Kassel,
\textit{Quantum groups},
Graduate Texts in Mathematics,
Volume 155, Springer Verlag, 1995.

\bibitem[Ki]{Ki} A.~Kitaev, 
\textit{Anyons in an exactly solved model and beyond},
Ann. Physics \textbf{321} 1 (2006), pp. 2--111, 
\href{http://arxiv.org/abs/cond-mat/0506438}{arXiv:cond-mat/0506438}.

\bibitem[KW]{KW} D. Kazdan; H. Wenzl, 
\textit{Reconstructing monoidal categories}, 
Adv. Soviet Math. \textbf{16} 2 (1993), pp. 111--136.

\bibitem[Mac]{Mac} S.~MacLane,
\textit{Categories for the Working Mathematician},
Graduate Texts in Mathematics,
Volume 5, Springer Verlag, 1971.

\bibitem[Ma]{Ma} T.\ Matsui, 
\textit{On polynomial time computable numbers}, 
\href{http://arxiv.org/abs/cs/0608067}{arXiv:cs/0608067}.

\bibitem[MS]{MS} G.~Moore, N.~Seiberg, 
\textit{Classical and quantum conformal field theory},
 Comm. Math. Phys. \textbf{123} 2 (1989), pp. 177--254.
 
\bibitem[MoS]{MoS} S.~Morrison, N.~Snyder,
\textit{Non-cyclotomic fusion categories},
Trans. Amer. Math. Soc. \textbf{364} 9 (2012), pp. 4713--4733.

\bibitem[NS]{NS} S.-H. Ng and P. Schauenburg, 
\textit{Congruence subgroups and generalized Frobenius-Schur indicators}, 
Comm. Math. Phys, \textbf{300} 1 (2010), pp 1-46, 
\href{http://arxiv.org/abs/0806.2493}{arXiv:0806.2493}.

\bibitem[Mu]{Mu} M.~M\"uger,
\textit{From subfactors to categories and topology I: Frobenius algebras in and Morita equivalence of tensor categories},
J. Pure Appl. Algebra \textbf{180} (2003), pp. 81--157.

\bibitem[O]{O} V.~Ostrik,
\textit{Module categoris, weak Hopf algebras and modular invariants}, 
Transform. Groups \textbf{8} (2003), pp. 177--206.

\bibitem[R]{R} E.~Rowell,
\textit{From quantum groups to unitary modular tensor categories}, 
Contemp. Math. \textbf{413} (2006), pp. 215--230.

\bibitem[Ri]{Ri} H.~G.~Rice, 
\textit{Recursive real numbers}, 
Proc. Amer. Math. Soc. \textbf{5} (1954), pp. 784--791.

\bibitem[RSW]{RSW} E.~Rowell, R.~Stong, Z.~Wang, 
\textit{On classification of modular tensor categories}, 
Comm. Math. Phys. \textbf{292} 2 (2009), pp. 343--389, 
\href{http://arxiv.org/abs/0712.1377}{arXiv:0712.1377}.

\bibitem[S]{S} S.~F.~Sawin,
\textit{Quantum groups at roots of unity and modularity}
J. Knot Theory Ramifications \textbf{15} 10 (2006), pp. 1245--1277.

\bibitem[St]{St} N.~Stalder,
\textit{Scalar Extension of Abelian and Tannakian Categories},
\href{http://arxiv.org/abs/0806.0308}{arXiv:0806.0308}.

\bibitem[T]{T} A.~C.~M.~Turing, 
\textit{On computable numbers, with an application to the entscheidungs problem}, 
Proc. London Math. Soc. \textbf{42} (1936), pp. 230--265.

\bibitem[Tu]{Tu} V.~Turaev, 
\textit{Quantum Invariants of Knots and 3-Manifolds}, 
De Gruyter Studies in Mathematics \textbf{18}, Walter de Gruyter \& Co., Berlin, 1994.

\bibitem[TW]{TW} I.~Tuba, H.~Wenzl, 
\textit{On braided tensor categories of type $BCD$},
J. reine angew. Math. \textbf{581} (2005), pp. 31--69.

\bibitem[V]{V} C.~Vafa,
\textit{Toward classification of conformal theories},
Phys. Lett. B \textbf{206} (1988), pp. 421--426.

\bibitem[Wa10]{Wangbook}Z.\ Wang, 
\textit{Topological quantum computation}, 
CBMS Regional Conference Series in Mathematics, Volume 112, Amer. Math. Soc., 2010.

\bibitem[Wa12]{Wang12} Z.~ Wang, 
\textit{Quantum Computing: a Quantum Group Approach}, 
\href{http://arxiv.org/abs/1301.4612}{arXiv:1301.4612}. 

\bibitem[Y]{Y} S.~Yamagami, 
\textit{Polygonal presentations of semisimple tensor categories},
J. Math. Soc. Japan  \textbf{54} (2002), no. 1, pp. 61--88.

\bibitem[Ye]{Ye} D.M.~Yetter,
\textit{Framed tangles and a theorem of Deligne on braided deformations of Tannakian categories},
in Deformation Theory and Quantum Groups with Applications to Mathematical Physics, 
Proc. AMS-IMS-SIAM Jt. Summer Res. Conf., Amherst MA USA 1990, 
Contemp. Math. \textbf{134} (1992), pp. 325--349

\end{thebibliography}

\end{document}